\crefname{theorem}{Theorem}{Theorems}
\crefname{thm}{Theorem}{Theorems}
\crefname{mainthm}{Theorem}{Theorems}
\crefname{conj}{Conjecture}{Theorems}
\crefname{lemma}{Lemma}{Lemmas}
\crefname{lem}{Lemma}{Lemmas}
\crefname{remark}{Remark}{Remarks}
\crefname{prop}{Proposition}{Propositions}
\crefname{defn}{Definition}{Definitions}
\crefname{corollary}{Corollary}{Corollaries}
\crefname{cor}{Corollary}{Corollaries}
\crefname{section}{Section}{Sections}
\crefname{figure}{Figure}{Figures}
\crefname{quest}{Question}{Questions}
\newcommand{\N}{\mathbb{N}}
\newcommand{\Z}{\mathbb{Z}}
\newcommand{\Q}{\mathbb{Q}}
\newcommand{\R}{\mathbb{R}}
\newcommand{\C}{\mathbb{C}}
\newcommand{\FF}{\vec{F}}
\newcommand{\ind}{\mathbf{1}}
\newcommand{\Tor}{\operatorname{Tor}}
\newcommand{\stab}{\operatorname{stab}}
\newcommand{\rank}{\operatorname{rank}}
\newcommand{\spn}{\operatorname{span}}
\newcommand{\SI}[1]{{#1}-hyperplane repetition property}
\newcommand{\tF}{F^{\Tor}}
\newcommand{\absolute}[1] {\left|{#1}\right|}
\newcommand{\ignore}[1]{}
\font\si = cmssi12 scaled \magstep0
\long\def\comyaar#1{\ifdraft{\color{red}\si Yaar says ``#1'' }\else\ignorespaces\fi}
\newtheorem{thm}{Theorem}[section]
\newtheorem{lemma}[thm]{Lemma}
\newtheorem{prop}[thm]{Proposition}
\newtheorem{cor}[thm]{Corollary}
\theoremstyle{definition}
\newtheorem{definition}[thm]{Definition}
\newtheorem{example}[thm]{Example}
\newtheorem{remark}[thm]{Remark}
\begin{document}

\begin{frontmatter}[classification=text]
\author[tm]{Tom Meyerovitch\thanks{Supported by Israel Science Foundation grant no. 1052/18 and 985/23}}
\author[sa]{Shrey Sanadhya\thanks{Supported by Israel Science Foundation grant no. 1052/18}}
\author[ys]{Yaar Solomon}

\begin{abstract}
An old theorem of Newman asserts that any tiling of $\mathbb{Z}$ by a finite set is periodic. A few years ago, Bhattacharya proved the periodic tiling conjecture in $\Z^2$. Namely, he proved that for a finite subset $F$ of $\Z^2$, if there exists $A \subseteq \Z^2$ such that $F \oplus A = \Z^2$ then there exists a periodic $A' \subseteq \Z^2$ such that $F \oplus A' = \Z^2$.  The recent refutation of the periodic tiling conjecture in high dimensions due to Greenfeld and Tao motivates finding different generalizations of Newman's theorem and of Bhattacharya's theorem that hold in arbitrary dimension $d$. In this paper, we formulate and prove such generalizations. We do so by studying the structure of joint co-tiles in $\mathbb{Z}^d$. Our generalization of Newman's theorem states that for any $d \ge 1$, any joint co-tile for $d$ independent tiles is periodic.  For a $(d-1)$-tuple of finite subsets of $\Z^d$ that satisfy a certain technical condition that we call property $(\star)$, we prove that any joint co-tile decomposes into disjoint $(d-1)$-periodic sets. Consequently, we show that for a $(d-1)$-tuple of finite subsets of $\Z^d$ that satisfy property $(\star)$, the existence of a joint co-tile implies the existence of periodic joint co-tile.  Conversely, we prove that if a finite subset $F$ in $\mathbb{Z}^d$ admits a periodic co-tile $A$, then there exist $(d-1)$ additional tiles that together with $F$ are independent and admit $A$ as a joint co-tile, so that the first $(d-2)$ of these tiles together with $F$ satisfy property $(\star)$. Combined, our results give a new necessary and sufficient condition for a subset of $\mathbb{Z}^d$ to tile periodically. We also discuss tilings and joint tilings in other countable abelian groups.
\end{abstract}

\end{frontmatter}

\section{Introduction} 
Let $\Gamma$ be a countable abelian group. 
Given $A,B \subseteq \Gamma$ the \emph{sumset} $A+B$ is given by
\[
A+B = \left\{ a+b ~:~ a \in A , ~ b\in B \right\}.
\]
We write $A\oplus B = C$ if every $c \in C$  has a unique representation as $c = a+ B $ with $a \in A$ and $b \in B$.
We denote the cardinality of a set $F$ by $|F|$.
We write $F \Subset \Gamma$ to indicate that $F$ is a finite subset of $\Gamma$.

One says that $F$ \emph{tiles} $\Gamma$ if there exists a
 collection of disjoint union of translates of $F$ whose union is equal to $\Gamma$. That is, $F$ tiles $\Gamma$ if
there exists a set $A \subseteq \Gamma$ such that 
\begin{equation}\label{eq:tiling}
    F\oplus A = \Gamma.
\end{equation} 
 In that case, we say that $A$ is a  \textit{co-tile} for the \textit{tile} $F$. Let $g,h: \Gamma \rightarrow \R$, where $\Gamma$ is a countable abelian group. When at least one of the functions $g,h: \Gamma \rightarrow \R$ is finitely supported, the  \emph{convolution} $g * h:\Gamma \rightarrow \R$  is well defined and  given by 
\[
g*h (x) = \sum_{y \in \Gamma} g(y) \cdot h(x-y).
\]

Using this notation, equation \eqref{eq:tiling} is equivalent to $\ind_F * \ind_A = 1$, where $\ind_X$ denotes the indicator function of the set $X$.

Elements  $g_1,\ldots,g_k \in \Gamma$ of the abelian group $\Gamma$ are called \emph{independent} if the only integers $n_1,\ldots,n_k \in \Z$ that satisfy  $\sum_{j=1}^k n_j g_j =0$ are $n_1=\ldots = n_k=0$.
The \emph{rank} of an abelian group is the maximal size of an independent set.

 Suppose that $\Gamma$ is an abelian group of rank $d$ and that $k\le d$. A set $C \subseteq \Gamma$ is called \textit{$k$-periodic} if there exists a subgroup $L \le \Gamma$, with $\rank(L)\ge k$, such that $C + L = C$. In the case that $k=d$ we will also say that $C$ is \emph{periodic} instead of $d$-periodic.  
 We say that a tile set $F \Subset \Gamma$ \textit{tiles $\Gamma$ periodically} if there exits a periodic co-tile for $F$. If $F$ tiles $\Gamma$ but does not admit a periodic co-tile, then the set $F$ is called \emph{aperiodic}. 

Newman  \cite{Newman_1977} proved that any tiling of $\Gamma=\Z$ by a finite set is periodic. 
Already for $\Gamma = \Z^2$, it is not difficult to find tilings of $\Gamma$ by a finite set that are not even $1$-periodic. See  \cite[\S 1.3]{Greenfeld_Tao1_2021} for some examples and a brief discussion.
Still, it is natural to ask for different generalizations of Newman's theorem to higher-rank abelian groups. It has been conjectured for some time that for any $F \Subset \Z^d$, if there exists $A \subseteq \Z^d$ such that $F \oplus A = \Z^d$ then there exists a periodic $A' \subseteq \Z^d$ such that $F \oplus A' = \Z^d$ \cite{Lagarias_Wang_1996}, \cite{Grunbaum_Shephard_1987}. This conjecture became known as the \emph{periodic tiling conjecture}. The periodic tiling conjecture can be interpreted as an attempt to generalize Newman's theorem. The $\Z^2$ case of the periodic tiling conjecture was proved several years ago by Bhattacharya \cite{BPeriodicity2020}.
Other instances of the periodic tiling conjecture have been proved, under additional assumptions  \cite{Beauquier_Nivat_1991,Khetan_2021,Kenyon_1992,Szegedy_1998,Wijshoff_Leeuwen_1984}. The periodic tiling conjecture has recently been disproved for sufficiently large $d$ by Greenfeld and Tao~\cite{Greenfeld_Tao_2022}.

 The recent refutation of the periodic tiling conjecture in high dimensions motivates finding different generalizations of Newman's theorem and of Bhattacharya's theorem that hold in arbitrary dimension $d$. In this paper, we formulate and prove such generalizations.  Our approach is to study the structure of sets $A \subseteq \Z^d$  that satisfy 
\begin{equation}\label{eq:system_of_tiling_equations}
    F_j \oplus A = \Z^d \mbox{ for all } j=1,\ldots,k,
\end{equation}
for  subsets $F_1,\ldots,F_k \Subset \Z^d$.
We refer to such an $A$ as a \emph{joint co-tile} for $F_1,\ldots,F_k$.

In \cite{Greenfeld_Tao2_2021}, sets $A \subseteq \Z^d$ satisfying \eqref{eq:system_of_tiling_equations} have been referred to as solutions to the \emph{system of tiling equations}. It is not difficult to see that the existence of a joint co-tile for $F_1,\ldots,F_k \Subset \Z^{d}$ implies that $|F_1| = |F_2|=\ldots = |F_k|$ (see \Cref{prop:mean}).

Observe that whenever $A \subseteq \Z^d$ is a co-tile for $F \Subset \Z^d$, for every $k$ there exist  tiles $F_1,\ldots,F_k \Subset \Z^d$ admitting $A$ as a joint co-tile. For instance, for every
 $v_1,\ldots,v_k \in \Z^d$,  the set $A$ is a co-tile for $F_1,\ldots,F_k \Subset \Z^d$, where $F_i = F + v_i$, $i = 1,\ldots,k$. Furthermore, by the Dilation lemma \cite[Lemma 3.1]{Greenfeld_Tao_2022} (see also \Cref{lem:dilation} below) for any $F\Subset \Z^d$, and any $k \in \N$ there exist $r_1,\ldots,r_k \in \N$, such that any co-tile of $F$ is also a co-tile for $F_1,\ldots,F_k$, where $F_i := r_iF = \{ r_i f~:~f \in F\}$. Thus, in general, joint co-tiles for $k$-tuples of tiles $F_1,\ldots,F_k$ have no more structure than co-tiles for a single tile $F$. 
However, it turns out that additional structure on a $A \subset \Z^d$ can be concluded from the fact that it is a joint co-tile for $k$ tiles that are ``sufficiently different'', in a natural sense that in particular excludes the situations where these $k$ tiles cannot be obtained from a single tile by translations and dilations. 

As with ordinary systems of linear equations, it makes sense to introduce a notion of independence in this setup. For $F\Subset\Z^d$ we denote  
\[
F^* := F\setminus\{0\}.
\]

We say that $(F_1,\ldots,F_k)$ is an \emph{independent tuple} of tiles (or \emph{$k$ independent tiles}) if each $F_j$ is a finite subset of $\Z^d$, with $0 \in F_j$, and for every choice of $v_1 \in F_1^*,\ldots, v_k \in F_k^*$, the $k$-tuple $(v_1,\ldots,v_k)$ is independent (equivalently here, linearly independent vectors over $\mathbb{Q}$, or similarly over $\mathbb{R}$ or $\mathbb{C}$), see Figure \ref{pic:independent}. Notice that if  $(F_1,\ldots,F_k)$ is an independent tuple of tiles then $k \le d$. 
\begin{figure}[H]
\centering
\includegraphics[scale=0.5]{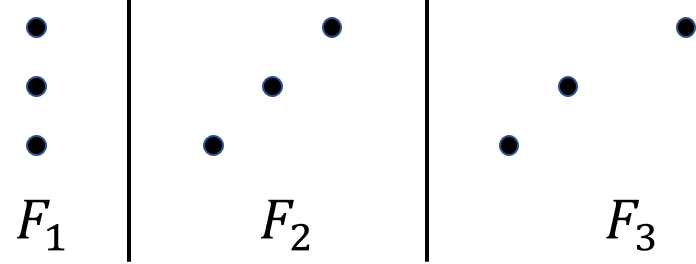}
\caption{Here $F_1, F_2, F_3\subset\Z^2$ and the lower point in all three of them is $(0,0)$. Each one of the pairs $(F_1,F_2)$ and $(F_1,F_3)$ is independent, but the pair $(F_2,F_3)$ is not. Also note that if the points in each of the sets lie on the lines $\{y=0\}$, $\{y=1\}$ and $\{y=2\}$, then $A=\Z\times 3\Z$ is a joint co-tile for $F_1, F_2,F_3$.}
\end{figure}\label{pic:independent}

Building on methods developed in \cite{BPeriodicity2020}, \cite{Greenfeld_Tao1_2021} and earlier work, we prove the following:
\begin{thm}\label{thm:periodic_decomposition}
For every $1\le k\le d$ the indicator function of any joint co-tile for $k$ independent tiles in $\Z^d$ is equal to a sum of $[0,1]$-valued $k$-periodic functions plus an integer constant.
\end{thm}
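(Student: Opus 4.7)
The plan is to prove the decomposition by induction on $k$, treating each independent tile as imposing one additional direction of periodicity. The base case $k=0$ is trivial: any $A\subseteq\Z^d$ has $\ind_A$ itself as a single $[0,1]$-valued ($0$-periodic) summand, with $c=0$. The inductive step carries the content. The main technical input will be a Bhattacharya-style Cesaro-averaging argument combined with the Dilation lemma cited in the introduction.

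For the inductive step, let $(F_1,\ldots,F_k)$ be an independent $k$-tuple with joint co-tile $A$. Apply the inductive hypothesis to the sub-tuple $(F_1,\ldots,F_{k-1})$ to write $\ind_A = \sum_i f_i + c$, with each $f_i \colon \Z^d \to [0,1]$ being $(k-1)$-periodic, say invariant under a subgroup $L_i\le\Z^d$ of rank at least $k-1$. Convolving the remaining equation $\ind_{F_k}\ast\ind_A = 1$ with this decomposition yields $\sum_i \ind_{F_k}\ast f_i = 1 - c|F_k|$, and each term $\ind_{F_k}\ast f_i$ is still $L_i$-invariant.

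Independence of $F_k^*$ from $F_1^*,\ldots,F_{k-1}^*$ guarantees that $F_k$ is not contained in $L_i$ for any $i$, so that the new constraint is non-trivial modulo each $L_i$. Choosing a suitable $v\in F_k^*$ whose image in the quotient $\Z^d/L_i$ is non-zero, a Cesaro averaging of $f_i$ along translates by $v$, combined with the Dilation lemma applied to $F_k$ to replace it by sufficiently large dilates $rF_k$, extracts an additional period direction for $f_i$ beyond $L_i$. This upgrades each $f_i$ from $(k-1)$-periodic to $k$-periodic.

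The main obstacle is that the decomposition $\ind_A = \sum f_i + c$ is non-unique, and only the sum $\sum_i f_i$ is directly constrained by the $k$-th tiling equation, not the individual summands. Asserting that each $f_i$ inherits the new periodicity therefore requires either canonizing the decomposition from the inductive hypothesis --- for example via spectral projection of $\ind_A$ onto characters stabilized by specific lattices, or by extracting extreme points in a convex set of admissible decompositions --- or directly showing that each summand can be further split so that each resulting piece is $k$-periodic. The independence hypothesis is essential precisely here: it ensures that the $k$-th constraint is genuinely transverse to every stabilizer lattice arising in the decomposition, so that the new periodicity direction is truly forced rather than being absorbable into existing periods.
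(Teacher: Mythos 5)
Your overall strategy (dilation lemma plus Ces\`aro averaging, building the decomposition one tile at a time) is the right one and matches the paper's in spirit, but the proposal has a genuine gap at exactly the point you flag yourself, and none of the escape routes you list is actually carried out. If you start the inductive step from an \emph{arbitrary} decomposition $\ind_A=\sum_i f_i+c$ supplied by the inductive hypothesis, the $k$-th tiling equation only constrains the sum $\sum_i \ind_{F_k}*f_i$, not the individual summands, so there is no way to run the dilation/averaging argument on each $f_i$ separately: the dilation lemma requires the hypothesis $\ind_{F_k}*f_i=\ell$ for the single function $f_i$, and an arbitrary summand of an arbitrary decomposition need not satisfy any such equation. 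Moreover, the conclusion you want from the step --- that each $f_i$ is ``upgraded'' from $(k-1)$-periodic to $k$-periodic --- is not what is true in general: the level-$(k-1)$ pieces do not acquire an extra period, they must be \emph{split} into several new pieces, each of which has its own extra period direction.

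The paper closes this gap by making the decomposition canonical and self-reproducing rather than quoting an abstract inductive hypothesis: for each tuple $(v_1,\ldots,v_i)\in F_1^*\times\cdots\times F_i^*$ it defines
\[
\phi^{(N)}_{v_1,\ldots,v_i}=\frac1{N^i}\sum_{n_1,\ldots,n_i=1}^N \delta_{(1+n_1q)v_1+\cdots+(1+n_iq)v_i}*\ind_A ,
\]
takes a subsequential limit $\phi_{v_1,\ldots,v_i}$, and verifies (i) a telescoping estimate giving $qv_j$-periodicity for each $j\le i$, (ii) the recursion $\phi_{v_1,\ldots,v_i}=1-\sum_{v_{i+1}\in F_{i+1}^*}\phi_{v_1,\ldots,v_i,v_{i+1}}$, and, crucially, (iii) that each $\phi_{v_1,\ldots,v_i}$ still satisfies $\ind_{F_j}*\phi_{v_1,\ldots,v_i}=1$ for every $j$, because it is an average of translates of $\ind_A$. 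Property (iii) is exactly what licenses the next application of the dilation lemma to each summand individually --- the step your proposal cannot perform. Independence of the tuple then enters only at the very end, to guarantee that the stabilizer $q\Z v_1+\cdots+q\Z v_k$ has rank $k$. To repair your argument, replace ``apply the inductive hypothesis to obtain some decomposition'' by ``construct the decomposition so that each piece is itself a $[0,1]$-valued solution of all $k$ tiling equations''; this is precisely \Cref{thm:structure_theorem}.
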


The case $k=1$ of \Cref{thm:periodic_decomposition} was proven in \cite{Greenfeld_Tao1_2021}. As a direct consequence of \Cref{thm:periodic_decomposition}, we obtain the following multidimensional generalization of Newman's result:
 
\begin{thm}\label{thm:d_independent_tiles}
For any independent tuple of tiles in $\Z^d$, the set of joint co-tiles is finite. In particular, 
any joint co-tile for $d$ independent tiles in $\Z^d$ is $d$-periodic. Furthermore, if $(F_1,\ldots,F_d)$ is an independent tuple of tiles in $\Z^d$ and $f:\Z^d\to \Z$ is a bounded function that satisfies $\ind_{F_i} * f = 1$ for every $1\le i \le d$, then $f$ is $d$-periodic.  
\end{thm}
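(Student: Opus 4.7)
The plan is to deduce \Cref{thm:d_independent_tiles} from \Cref{thm:periodic_decomposition}. The crucial observation is that a $d$-periodic function on $\Z^d$ is invariant under some rank-$d$ subgroup of $\Z^d$, and every rank-$d$ subgroup of $\Z^d$ necessarily has finite index. Hence a $d$-periodic function on $\Z^d$ factors through a finite abelian quotient group.

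First, I would establish $d$-periodicity of any joint co-tile. Given a joint co-tile $A$ for an independent tuple $(F_1,\ldots,F_d)$, \Cref{thm:periodic_decomposition} (applied with $k=d$) yields a decomposition
\[
\ind_A = c + \sum_{j=1}^m \phi_j,
\]
where $c\in\Z$ and each $\phi_j$ is a $[0,1]$-valued $d$-periodic function. Each $\phi_j$ is invariant under a rank-$d$ subgroup $L_j \le \Z^d$ of finite index. Setting $L := \bigcap_{j=1}^m L_j$, which is again a finite-index subgroup of $\Z^d$, we obtain that $\ind_A$ is $L$-invariant, so $A$ itself is $d$-periodic.

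For the finiteness statement, I would inspect the proof of \Cref{thm:periodic_decomposition} to verify that the periodicity lattices of the summands $\phi_j$ can be chosen to depend only on $(F_1,\ldots,F_d)$, uniformly over all joint co-tiles $A$. Granted this uniformity, there is a single finite-index subgroup $L_0 \le \Z^d$ such that every joint co-tile $A$ is $L_0$-periodic. Each such $A$ is then determined by its image in the finite quotient $\Z^d/L_0$, giving finitely many joint co-tiles.

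For the final statement about bounded $\Z$-valued $f$ satisfying $\ind_{F_i} * f = 1$, the plan is to check that the decomposition from \Cref{thm:periodic_decomposition} applies more generally to bounded integer-valued solutions of the system of tiling equations, producing $f = c + \sum_j \phi_j$ with each $\phi_j$ real-valued and $d$-periodic; the same intersection-of-lattices argument then yields $d$-periodicity of $f$. The main obstacle is extracting the uniformity of the periodicity lattice needed for the finiteness claim, together with confirming that the decomposition in \Cref{thm:periodic_decomposition} genuinely extends from indicator functions to bounded integer-valued functions---both of which amount to a careful reading of the construction rather than substantial new arguments.
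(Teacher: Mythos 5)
Your proposal is correct and follows essentially the same route as the paper: the paper applies \Cref{thm:structure_theorem} with $k=d$ and observes that each $\stab(\phi_{v_1,\ldots,v_d})$ contains $q\Z v_1+\cdots+q\Z v_d$, whose finite intersection $L$ is of finite index by independence, giving $d$-periodicity and (since $L$ is uniform over all co-tiles) finiteness. The two points you flag for verification are already explicit in \Cref{thm:structure_theorem}: it is stated for bounded integer-valued $f$, and part \ref{thm_sec_c:structure_theorem} gives the periodicity lattice in terms of $q$, which for indicator functions depends only on $S=|F_1|$ and hence is uniform over all joint co-tiles.
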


We discuss further generalizations of Newman's theorem in \Cref{sec:countable_abelian} and particularly to groups of the form $\Z\times (\Z/p\Z)$ in \Cref{prop:newman_Z_mod_pZ}.

We say that a set $A \subseteq \Z^d$ is \emph{piecewise $k$-periodic} if  there exist $A_1,\ldots,A_r \subset \Z^d$ such that
$A= \biguplus_{j=1}^r A_j$ and each $A_j$ is $k$-periodic. 
Note that \cite{BPeriodicity2020}, \cite{Greenfeld_Tao1_2021} and \cite{Khetan_2021} used \emph{weakly periodic} for piecewise 1-periodic. 
In \cite{Greenfeld_Tao1_2021} it was shown that any $A \subseteq \Z^2$ satisfying $F \oplus A =\Z^2$ is piecewise $1$-periodic, whereas in \cite{BPeriodicity2020} it was shown that almost every solution to $F \oplus A = \Z^2$ is piecewise $1$-periodic, with respect to any invariant measure on the space of solutions. The apriori weaker ``almost everywhere'' result sufficed to prove the $\Z^2$ periodic tiling conjecture. The following result shows that the existence of piecewise $(d-1)$-periodic joint co-tiles implies the existence of $d$-periodic joint co-tiles. For $k=1$ and $d =2$ it coincides with the 
results in \cite{BPeriodicity2020}, \cite{Greenfeld_Tao1_2021}, deducing $2$-periodicity from piecewise $1$-periodicity.

\begin{thm}\label{thm:from_weakly_periodic_to_periodic}
Let $k$ and $d$ be positive integers and let $F_1,\ldots, F_k\Subset \Z^d$.
 If $F_1,\ldots, F_k$ admit a piecewise $(d-1)$-periodic joint co-tile,
    then they admit a $d$-periodic joint co-tile. 
\end{thm}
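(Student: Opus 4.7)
The plan is to use harmonic analysis on $\Z^d$ to extract a $d$-periodic joint co-tile from the piecewise $(d-1)$-periodic data. First, decompose $A = \biguplus_{j=1}^r A_j$ where each $A_j$ is $L_j$-invariant with $\operatorname{rank} L_j \ge d-1$; after absorbing fully $d$-periodic pieces, we may assume each $L_j$ has rank exactly $d-1$. Viewing $\ind_A$ as a bounded function on $\Z^d$ and its Fourier transform $\widehat{\ind_A}$ as a tempered distribution on the dual group $\mathbb{T}^d = \widehat{\Z^d}$, the piecewise periodic decomposition forces $\widehat{\ind_A}$ to be supported on the union $\bigcup_j L_j^{\perp}$ of annihilator subgroups, each of which is a finite union of circles in $\mathbb{T}^d$ (one circle per character of the torsion part of $\Z^d/L_j$). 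The joint tiling equations Fourier-transform to $\widehat{\ind_{F_i}} \cdot \widehat{\ind_A} = \delta_0$ for every $i$, so away from the origin $\widehat{\ind_A}$ is supported on the common zero set $\bigcap_i Z(\widehat{\ind_{F_i}})$.

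Call a circle $C \subset L_j^{\perp}$ \emph{generic} if at least one $\widehat{\ind_{F_i}}|_C$ is not identically zero; on such a circle, $\widehat{\ind_A}|_C$ is supported on the finitely many zeros of this real-analytic function, and the corresponding points of $\mathbb{T}^d$ are necessarily rational. Retaining only the mass at the origin together with the generic-circle point masses defines a trigonometric polynomial $f \colon \Z^d \to \C$ that is $d$-periodic and satisfies $\ind_{F_i} * f = 1$ for every $i$: the content of $\widehat{\ind_A}$ discarded in this step lies on the remaining ``bad'' circles, where every $\widehat{\ind_{F_i}}$ vanishes identically and hence contributes nothing to the product $\widehat{\ind_{F_i}} \cdot \widehat{\ind_A}$. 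The remaining task is to adjust $f$ into a genuine $\{0,1\}$-valued $d$-periodic indicator $\ind_{A'}$.

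To carry out this promotion, observe that each bad circle corresponds to a specific cancellation symmetry of the tiles $F_1,\ldots,F_k$ modulo a particular rank-$(d-1)$ subgroup $H \le \Z^d$, namely the kernel of the primitive character parametrizing the circle's direction. Passing to the quotient $\Z^d/H$, which is a countable abelian group of rank at most one with finite torsion, the joint tiling problem descends to an analogous problem in the quotient, where a Newman-type theorem (compare \Cref{prop:newman_Z_mod_pZ}) guarantees a periodic $\{0,1\}$-valued joint co-tile. Lifting these quotient solutions consistently across the different bad-circle subgroups, using the rigidity of the joint tiling equations at the generic frequencies, yields the required $d$-periodic joint co-tile $A'$. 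The main obstacle is precisely this consistent lifting: the degrees of freedom on different bad-circle components of $\widehat{\ind_A}$ interact nontrivially through the overlap of their defining subgroups, and one must coordinate the chosen liftings so that they agree both with the rational point masses at the generic frequencies and with the pointwise $\{0,1\}$-valuedness constraint — this is where the most delicate combinatorial/algebraic work lies.
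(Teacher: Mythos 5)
There is a genuine gap, and you flag it yourself: the final ``promotion'' step. Your Fourier argument, even granting all its intermediate claims, produces at best a $d$-periodic complex-valued function $f$ with $\ind_{F_i}*f=1$ for all $i$; the passage from this to an actual $\{0,1\}$-valued $d$-periodic set $A'$ is only gestured at via quotients, Newman-type theorems, and a ``consistent lifting'' that you explicitly describe as the most delicate part without carrying it out. That step is not a technicality --- it is essentially the whole content of the theorem, since a real-valued periodic solution of the tiling equations is much weaker than a periodic co-tile (the paper's Section 7.1 even exhibits an $F$ with an integer-valued co-tile but no co-tile). There are also unproved assertions earlier in the sketch: the claim that the point masses of $\widehat{\ind_A}$ on a generic circle sit at rational points does not follow from real-analyticity alone (zeros of a trigonometric polynomial on a rational circle are generally irrational; rationality would have to come from an equidistribution argument using that $\ind_A$ is $\{0,1\}$-valued), and splitting the pseudomeasure $\widehat{\ind_A}$ into pieces supported on individual, mutually intersecting circles and then discarding the ``bad'' components needs justification.

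For contrast, the paper's route stays entirely in the $\{0,1\}$-valued world and resolves the promotion problem with symbolic dynamics. Writing $\ind_A=\sum_j f_j$ with each $f_j$ a $(d-1)$-periodic $\{0,1\}$-valued function, it first shows (via the polynomial-stabilizer lemma, \Cref{lem:stabilizers_polynomials}, and \Cref{lem:bounded_poly=constant}) that after merging pieces one may assume each $\ind_{F_i}*f_j$ is already $d$-periodic. Then for each $j$ the set of $\{0,1\}$-valued $x$ with $\ind_{F_i}*x=\ind_{F_i}*f_j$ for all $i$ is a subshift of finite type containing the $(d-1)$-periodic point $f_j$, hence by \Cref{lem:Zd_SFT_periodic_points_from_d_minus_1} (a pigeonhole argument) it contains a $d$-periodic point $\tilde f_j$; the sum $\sum_j\tilde f_j$ is then forced to be $\{0,1\}$-valued by the tiling equations and is the desired periodic co-tile. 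If you want to salvage your approach, you would need to replace your lifting heuristic by something of this SFT flavour --- the Fourier side alone does not see the pointwise $\{0,1\}$ constraint.
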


 
We now define an additional condition on a tuple of tiles, which is needed for the formulation of a certain generalization of Bhattacharya's and Greenfeld-Tao's  theorems to $d >2$:

\begin{definition}\label{def:property_star}
    Let $(F_1,\ldots,F_{d-1})$ be a tuple of tiles in $\Z^d$, $d \ge 2$. We say that $(F_1,\ldots,F_{d-1})$ has \emph{property $(\star)$} if it is an independent tuple and for every  $(v_1,\ldots,v_{d-1}),(w_1,\ldots,w_{d-1}) \in F_1^*\times \ldots \times F_{d-1}^*$ such that 
\[\spn(v_1,\ldots,v_{d-1})= \spn(w_1,\ldots,w_{d-1}),\]
we have $v_i=w_i$ for all $1 \le i \le d-2$.
\end{definition}

\begin{example} Consider the tiles
    \[F_1=\{(0,0,0),(1,0,0),(0,1,0),(1,1,0)\} \mbox{ and }
    F_2=\{(0,0,0),(1,0,1),(0,1,1),(1,1,1)\}.\]
    Then $(F_1,F_2)$ has property $(\star)$. Indeed it is easy to see that $(F_1,F_2)$ is an independent tuple of tiles in $\Z^3$, and for any $v_1 \in F_1^*$ and $v_2 \in F_2^*$ the intersection of $\spn(v_1,v_2)$ with the plane $V = \{ (a,b,0) ~:~ a,b \in \mathbb{R}\}$ is equal to $\spn(v_1)$.
    Also, for $A= 2\Z \times 2\Z \times \Z$ we have $F_1 \oplus A = F_2 \oplus A = \Z^3$, so $A$ is a (periodic) joint co-tile.
\end{example}



\begin{thm}\label{thm:main1}
Let $(F_1,\ldots,F_{d-1})$ be a tuple of tiles in $\Z^d$ that has property $(\star)$.
Then any joint co-tile for $F_1,\ldots,F_{d-1}$ is piecewise $(d-1)$-periodic.
\end{thm}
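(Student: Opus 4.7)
The plan is to apply \Cref{thm:periodic_decomposition} with $k=d-1$ to obtain a $[0,1]$-valued $(d-1)$-periodic decomposition of $\ind_A$, and then use property $(\star)$ to reduce the problem to a rank-$2$ quotient, in which the Greenfeld--Tao piecewise $1$-periodicity theorem \cite{Greenfeld_Tao1_2021} (or its extension to countable abelian groups of rank $2$, cf.\ \Cref{sec:countable_abelian}) supplies a $\{0,1\}$-valued disjoint decomposition that can be pulled back to $\Z^d$.

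Since property $(\star)$ entails independence, \Cref{thm:periodic_decomposition} applied to the joint co-tile $A$ yields
\[
\ind_A \;=\; \sum_{j=1}^N f_j + c,
\]
with each $f_j : \Z^d \to [0,1]$ invariant under a rank-$(d-1)$ subgroup $L_j \le \Z^d$ and $c \in \Z$. Revisiting the Bhattacharya/Greenfeld--Tao Fourier argument underlying \Cref{thm:periodic_decomposition}, I would extract the additional information that each $L_j$ is the saturation in $\Z^d$ of $\spn_\Z(v_1^j,\ldots,v_{d-1}^j)$ for some tuple $(v_1^j,\ldots,v_{d-1}^j) \in F_1^* \times \cdots \times F_{d-1}^*$.

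The central use of property $(\star)$ is then to group the terms by the truncation $\tau_j = (v_1^j,\ldots,v_{d-2}^j)$, which by $(\star)$ is uniquely determined by $L_j$. For each truncation $\tau$ that arises, let $L_\tau$ denote the rank-$(d-2)$ subgroup it spans (saturated in $\Z^d$). The sub-sum $g_\tau := \sum_{j:\tau_j=\tau} f_j$ is $L_\tau$-periodic and descends via $\pi_\tau : \Z^d \to \Z^d/L_\tau$ to a $[0,1]$-valued function on the rank-$2$ abelian quotient $\Z^d/L_\tau$. By averaging $\ind_A$ along fibers of $\pi_\tau$ and using the tiling equations $F_i \oplus A = \Z^d$ for $i \le d-2$, the resulting push-forward $\overline{\ind_A}$ on $\Z^d/L_\tau$ should satisfy a two-dimensional tiling equation whose tile is a natural projection of $F_{d-1}$ through $\pi_\tau$.

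The main obstacle is this rank-reduction step: establishing that the push-forward yields an honest tiling equation by a finite set in $\Z^d/L_\tau$, so that the two-dimensional piecewise $1$-periodicity theorem truly applies. Property $(\star)$ is essential here, as it is precisely what ensures that projecting through $\pi_\tau$ does not collapse the tuple data ambiguously: the vectors $v_1,\ldots,v_{d-2}$ that define $L_\tau$ are recoverable from $L_\tau$ itself. Once the rank-$2$ tiling equation is identified, the resulting $\{0,1\}$-valued piecewise $1$-periodic decomposition on $\Z^d/L_\tau$ lifts through $\pi_\tau$ to a decomposition into $(d-1)$-periodic sets in $\Z^d$, and combining these lifts across the finite set of truncations $\tau$ produces the desired piecewise $(d-1)$-periodic decomposition of $A$.
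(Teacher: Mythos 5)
Your opening moves match the paper: apply the periodic decomposition theorem with $k=d-1$, record that each stabilizer contains $q\Z v_1+\cdots+q\Z v_{d-1}$ for a tuple in $F_1^*\times\cdots\times F_{d-1}^*$, and use property $(\star)$ to observe that the truncation $(v_1,\ldots,v_{d-2})$ is determined by the span. But the continuation has a genuine gap, and it is exactly the step you flag as ``the main obstacle'': the rank-reduction to $\Z^d/L_\tau$ does not produce an honest two-dimensional tiling equation. The sub-sum $g_\tau=\sum_{j:\tau_j=\tau}f_j=1-\phi_{v_1,\ldots,v_{d-2}}$ is $[0,1]$-valued, not $\{0,1\}$-valued, and $\ind_A$ itself is not $L_\tau$-periodic, so its ``push-forward'' is not the indicator of a set. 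The Greenfeld--Tao piecewise $1$-periodicity theorem is a statement about co-tiles (equivalently, $\{0,1\}$- or integer-valued solutions of $\ind_F*f=1$); it does not apply to real-valued level-one solutions, and for good reason: the obstruction one must rule out is precisely that the components $\phi_{v_1,\ldots,v_{d-1}}$ could be equidistributed modulo $1$ rather than periodic. Nothing in your outline excludes that behaviour, so the ``two-dimensional piecewise $1$-periodicity'' you want to invoke on the quotient is not available.

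The paper's proof never quotients. Instead it groups the $\phi_{v_1,\ldots,v_{d-1}}$ by the hyperplane $V=\spn(v_1,\ldots,v_{d-1})$ into functions $\psi_V$, uses property $(\star)$ to see that these groupings partition according to the truncation, shows via \Cref{lem:stabilizers_polynomials} that each $\psi_V$ modulo $1$ is a polynomial with respect to a finite index subgroup (because distinct hyperplanes have stabilizers whose sum has finite index), and then applies Weyl's equidistribution theorem (\Cref{lem:Weyl_in_action}): on each coset each $\psi_V$ is either equidistributed or periodic mod $1$, and the constraint that the $\psi_V$ are non-negative and sum to $1-\phi_{v_1,\ldots,v_{d-2}}\in[0,1]$ forces all but one of them to vanish (or all to be periodic) on each coset. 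Iterating this down the recursion in \Cref{thm:structure_theorem} yields that $\ind_A$ is $(d-1)$-periodic on each coset of a finite index subgroup. To repair your argument you would need to supply this equidistribution-versus-periodicity dichotomy for the real-valued functions $g_\tau$; once you do, you have essentially reconstructed the paper's proof, and the detour through the rank-$2$ quotient is unnecessary.
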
 The next statement follows immediately  from \Cref{thm:main1} together with \Cref{thm:from_weakly_periodic_to_periodic}.

\begin{cor}\label{cor:main_B}
Let $(F_1,\ldots,F_{d-1})$ be a tuple of tiles in $\Z^d$ that has property $(\star)$. If $(F_1,\ldots,F_{d-1})$ admits a joint co-tile then it admits a $d$-periodic joint co-tile.
\end{cor}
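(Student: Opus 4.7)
The plan is to chain together the two preceding results directly. Starting from an arbitrary joint co-tile $A \subseteq \Z^d$ for $(F_1,\ldots,F_{d-1})$, I would first invoke \Cref{thm:main1}, whose hypothesis is exactly the property $(\star)$ assumption we are given. The conclusion of \Cref{thm:main1} is that this particular $A$ is piecewise $(d-1)$-periodic; in other words, the tuple $(F_1,\ldots,F_{d-1})$ admits at least one piecewise $(d-1)$-periodic joint co-tile.

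Next I would feed this output into \Cref{thm:from_weakly_periodic_to_periodic}, applied with $k = d-1$. That theorem is phrased purely as an existence implication at the level of tuples: if the tuple $(F_1,\ldots,F_{d-1})$ admits \emph{some} piecewise $(d-1)$-periodic joint co-tile, then it admits \emph{some} $d$-periodic joint co-tile. Since we have just produced such a piecewise $(d-1)$-periodic joint co-tile in the previous step, the hypothesis is met and the corollary follows.

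There is essentially no hidden obstacle, since the substantive work has been encapsulated in the two cited theorems. The only conceptual point worth noting is that the $d$-periodic joint co-tile produced in the conclusion need not equal the original $A$, nor any simple modification of it; it is supplied abstractly by \Cref{thm:from_weakly_periodic_to_periodic}. The assumption of property $(\star)$ is used only to invoke \Cref{thm:main1}, whereas \Cref{thm:from_weakly_periodic_to_periodic} requires no structural hypothesis on the tuple beyond the existence of a piecewise $(d-1)$-periodic joint co-tile.
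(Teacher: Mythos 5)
Your proposal is correct and matches the paper's own derivation exactly: the paper states that \Cref{cor:main_B} follows immediately from \Cref{thm:main1} combined with \Cref{thm:from_weakly_periodic_to_periodic}, which is precisely the chain you describe. Your remark that the resulting $d$-periodic joint co-tile need not be related to the original $A$ is an accurate reading of how \Cref{thm:from_weakly_periodic_to_periodic} is used.
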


Note that for $d=2$, property $(\star)$ is vacuous, hence \Cref{thm:main1} reduces to the statement that any co-tile for a finite subset of $\Z^2$ is piecewise $1$-periodic (Greenfeld-Tao's theorem) and \Cref{cor:main_B} reduces to the statement that any finite subset of $\Z^2$ that admits a co-tile also admits a periodic co-tile (Bhattacharya's theorem). Hence for $d \geq 3$, it is natural to ask whether property $(\star)$ is a necessary condition for the existence of a periodic joint co-tile of $(d-1)$ tiles of $\Z^d$.

 We note a particular application of our methods, although not directly related to our main results:
\begin{thm}\label{thm:nice_corollary}
Suppose that $\Z^d$ decomposes into $(d-1)$-periodic subsets $A_1,\ldots,A_r \subset\Z^d$, where at least one of them is not $d$-periodic.
Then there exists $\Gamma\le\Z^d$ of rank $d-1$ so that $A_j+\Gamma=A_j$ for all $1\le j\le r$. 
\end{thm}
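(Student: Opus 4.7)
The plan is to extract a common hyperplane direction for all the stabilizers $P_j := \stab(A_j)$ and then take an intersection of suitable finite-index subgroups inside that hyperplane. By hypothesis some $A_{j_0}$ is not $d$-periodic, so $P_{j_0}$ has rank exactly $d-1$; set $V := P_{j_0}\otimes_\Z \R \subset \R^d$ and $\Lambda := V\cap \Z^d$, the primitive rank-$(d-1)$ sublattice in direction $V$, with $P_{j_0}\le \Lambda$ of finite index. The main claim is that for every $1\le j\le r$ the intersection $P_j\cap \Lambda$ has finite index in $\Lambda$. Granting this, the subgroup
\[
\Gamma := \bigcap_{j=1}^r (P_j\cap \Lambda)
\]
is a finite intersection of finite-index subgroups of $\Lambda$, hence has finite index in $\Lambda$, so $\rank(\Gamma) = d-1$; since $\Gamma\le P_j$ for every $j$, this $\Gamma$ stabilizes every $A_j$ and the theorem follows.

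To prove the main claim I argue by contradiction: suppose some $A_k$ has $P_k$ of rank $d-1$ in a direction $V_k \neq V$. Then $Q := P_{j_0}\cap P_k$ has rank $d-2$, and projecting to $\Z^d/Q$, which has rank $2$, yields images $\bar A_{j_0}$ and $\bar A_k$ that are $Q$-saturated, disjoint, and invariant under the rank-$1$ subgroups $P_{j_0}/Q$ and $P_k/Q$ respectively. On each coset of the finite-index sublattice $L := P_{j_0}/Q + P_k/Q$ of $\Z^d/Q$, lines in the two independent directions must intersect, so disjointness forces $\bar A_{j_0}$ and $\bar A_k$ to be supported on disjoint unions of $L$-cosets. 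The aperiodicity of the image of $A_{j_0}$ in $\Z^d/P_{j_0}$---which is precisely what ``$A_{j_0}$ is not $d$-periodic'' says---combined with the global partition $\Z^d = \biguplus_i A_i$ into $(d-1)$-periodic pieces, rules out such a constrained configuration and yields the contradiction.

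The hardest part is closing out this final contradiction: the coset confinement alone does not preclude two differently-directed rank-$(d-1)$ stabilizers, so the argument must genuinely exploit the other pieces $A_i$ with $i\ne j_0, k$ together with the non-$d$-periodicity of $A_{j_0}$. This is where the structural methods used in the proofs of \Cref{thm:from_weakly_periodic_to_periodic} and \Cref{thm:main1} are invoked to close out the argument.
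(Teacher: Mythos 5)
Your opening reduction is fine, and your dichotomy (two rank-$(d-1)$ stabilizers either share a hyperplane direction or sum to a finite-index subgroup) is exactly the observation the paper starts from. But the heart of the argument --- your ``main claim'' that $P_j\cap\Lambda$ has finite index in $\Lambda$ for every $j$ --- is never established. The two-piece projection to $\Z^d/Q$ is, as you yourself concede, insufficient, and deferring to unspecified ``structural methods'' of \Cref{thm:from_weakly_periodic_to_periodic} and \Cref{thm:main1} is not a proof. The mechanism the paper actually uses is quite specific and quite different from a disjointness or equidistribution argument: since $\sum_{j=1}^r\ind_{A_j}=1$ is constant (hence a bounded polynomial with finite-index stabilizer), one merges those $A_j$ whose stabilizers span the same hyperplane so that the stabilizers of the merged indicator functions pairwise sum to finite-index subgroups, and then applies \Cref{lem:piecewise_periodic_strt_1} (resting on \Cref{lem:stabilizers_polynomials} and \Cref{lem:bounded_poly=constant}): each merged indicator is a bounded polynomial with respect to a finite-index subgroup, hence constant on its cosets, hence $d$-periodic. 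That discrete-derivative argument is the missing ingredient in your outline.

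There is, however, a more serious obstruction to your route: even the paper's mechanism controls only the stabilizers of the \emph{merged} functions $\sum_{j\in C}\ind_{A_j}$, not of the individual pieces, and your main claim about the individual $P_j$ is false in general. In $\Z^2$ take $\alpha,\beta:\Z\to\{0,1\}$ aperiodic and set $A_1=\{(x,y):y\in2\Z,\ x\equiv\alpha(y)\ \mathrm{mod}\ 2\}$, $A_2=(\Z\times2\Z)\setminus A_1$, $A_3=\{(x,y):y\in2\Z+1,\ \beta(x)=1\}$ and $A_4=(\Z\times(2\Z+1))\setminus A_3$. These four sets partition $\Z^2$, each is $1$-periodic (with periods $(2,0),(2,0),(0,2),(0,2)$ respectively), and for generic $\alpha,\beta$ none is $2$-periodic, with $\stab(A_1)=2\Z\times\{0\}$ and $\stab(A_3)=\{0\}\times2\Z$, so $\stab(A_1)\cap\stab(A_3)=\{0\}$. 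Thus with $j_0=1$ and $\Lambda=\Z\times\{0\}$ your claim fails at $j=3$, and no rank-$1$ subgroup stabilizes all four pieces. So your (correct) instinct that ``coset confinement alone does not preclude two differently-directed rank-$(d-1)$ stabilizers'' cannot be overcome by any amount of extra machinery: two such directions genuinely can coexist in a partition of this kind. This example also shows that the hypothesis that some $A_{j_0}$ is not $d$-periodic --- which your outline never actually uses --- must enter in an essential way, and you should scrutinize the ``without loss of generality'' merging step in the paper's own proof with the same example in mind, since after merging, the stabilizers of the unions (here $\Z\times2\Z$ for both even and odd rows) are no longer those of the original pieces.
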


On the other hand, we obtain the following converse results for \Cref{thm:d_independent_tiles} and \Cref{cor:main_B}.  
\begin{thm}\label{thm:brother_tiles}
Suppose that $\{0\} \subsetneqq F \Subset \Z^d$ admits a periodic tiling $A \subseteq \Z^d$, then there exist $F_1,\ldots,F_{d-1} \Subset \Z^d$ with $0 \in F_j$ and $F_j \oplus A = \Z^d$ for all $ 1\le j \le d$, such that
\begin{enumerate} [label=(\alph*)]
    \item \label{thm_sec_a:brothers}
    $(F_1,\ldots,F_{d-1},F)$ is a $d$-tuple of independent tiles. 
    \item \label{thm_sec_b:brothers}
    $(F_1,\ldots,F_{d-2},F)$ has property $(\star)$.
\end{enumerate}
\end{thm}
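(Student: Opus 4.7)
The plan is to construct each $F_j$ as a generic lift to $\Z^d$ of a fixed subset of the finite quotient $T := \Z^d / L$, where $L \le \Z^d$ is a finite-index subgroup of rank $d$ that stabilizes the periodic co-tile $A$, and to use a parameter-counting argument to satisfy (a) and (b) simultaneously. Write $\pi \colon \Z^d \to T$, $\bar A := \pi(A)$ and $\bar F := \pi(F)$. A direct calculation (descending $\ind_F \ast \ind_A = 1$ to $T$ and using non-negativity of the indicators) shows that $\pi|_F$ is injective and $\bar F \oplus \bar A = T$; conversely, any finite $F' \subseteq \Z^d$ with $\pi|_{F'}$ injective and $\pi(F') = \bar F$ is again a co-tile for $A$.

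Fix a section $\sigma \colon \bar F \to \Z^d$ with $\sigma(0) = 0$, and look for $F_j$ of the form
\[
F_j = \{0\} \cup \bigl\{\sigma(c) + \ell_{j,c} : c \in \bar F \setminus \{0\}\bigr\},
\]
where $\ell_{j,c} \in L$ are free parameters. By construction $0 \in F_j$ and $F_j \oplus A = \Z^d$ regardless of the $\ell_{j,c}$. The task reduces to choosing the parameters so that $(F_1,\ldots,F_{d-1},F)$ is an independent $d$-tuple and $(F_1,\ldots,F_{d-2},F)$ satisfies property $(\star)$. Both requirements translate into finitely many non-vanishing conditions on polynomials in the coordinates of the $\ell_{j,c}$: condition (a) is the non-vanishing of
\[
\det[\sigma(c_1)+\ell_{1,c_1} \mid \cdots \mid \sigma(c_{d-1})+\ell_{d-1,c_{d-1}} \mid v]
\]
for each $(c_1,\ldots,c_{d-1}) \in (\bar F \setminus \{0\})^{d-1}$ and $v \in F^*$. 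For (b), using that two $(d-1)$-dimensional subspaces of $\Q^d$ coincide if and only if every generator of one lies in the other, the ``bad event'' that two spans in property $(\star)$ coincide is the simultaneous vanishing of the determinants $\det[u_1 \mid \cdots \mid u_{d-1} \mid w_j]$, where $(u_1,\ldots,u_{d-1})$ and $(w_1,\ldots,w_{d-1})$ are the two spanning tuples; it suffices to ensure that at least one such determinant is nonzero.

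The heart of the proof is showing that each of these polynomials is not identically zero as a polynomial in the $\ell_{j,c}$. For (a), the leading monomial (in total degree in the $\ell$'s) is $\det[\ell_{1,c_1} \mid \cdots \mid \ell_{d-1,c_{d-1}} \mid v]$; since $L$ has rank $d$ and $v \ne 0$, one can choose $d-1$ vectors in $L$ that, together with $v$, form a $\Q$-basis of $\Q^d$, making the determinant nonzero. For (b), one picks an index $i \le d-2$ at which the two $(d-2)$-tuples differ, so that $c_i \ne c_i'$; then the variable $\ell_{i,c_i}$ appears in one spanning tuple but not the other, and the determinant $\det[w_1 \mid \cdots \mid w_{d-1} \mid u_i]$ has a nonzero leading term by the same rank argument. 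With each relevant polynomial established as non-trivial, the set of ``good'' parameters is the complement in $L^{(d-1)(|F|-1)} \cong \Z^{d(d-1)(|F|-1)}$ of a finite union of proper hypersurfaces; the product of the defining polynomials is a nonzero element of the integral domain $\Z[x_1,\ldots,x_N]$, hence admits integer points where it does not vanish, and any such choice of parameters yields the required $F_1,\ldots,F_{d-1}$.

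The main technical obstacle is the non-triviality of the polynomials for (b): the bad event is an intersection of several polynomial vanishings rather than a single one, so one must identify both the correct determinant to consider and the specific $\ell$-variable to exploit. The key bookkeeping observation is that whenever the two $(d-2)$-tuples differ at some index $i \le d-2$, the variables $\ell_{i,c_i}$ and $\ell_{i,c_i'}$ are independent parameters, and varying them separately is enough to make the two spans differ.
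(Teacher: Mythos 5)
Your proposal is correct, and its skeleton matches the paper's: both first observe that translating each nonzero element of $F$ by an element of the period lattice $L$ preserves the co-tiling (your quotient computation showing $\pi|_F$ injective and $\bar F\oplus\bar A=T$ is just a repackaging of the paper's \Cref{lem:constructing_brothers}), and both then choose these translates ``generically.'' The difference is in how genericity is implemented. The paper proves an inductive general-position lemma (\Cref{lem:forcing_hyperplane_property}): it picks the perturbations one at a time, each step avoiding finitely many proper affine subspaces, which is possible because a finite-index subgroup of $\Z^d$ is not covered by finitely many such subspaces (\Cref{lem:lower_dim_subspaces}); this produces a single set $\tilde T$ in strong general position relative to $F^*$, from which (a) and (b) are read off by linear algebra. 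You instead encode (a) and (b) directly as non-vanishing of finitely many determinants, check that each is a nonzero polynomial in the lattice parameters by isolating its top-degree multilinear part, and take a nonzero product in $\Z[x_1,\ldots,x_N]$. Your key determinant for (b), namely $\det[w_1\mid\cdots\mid w_{d-1}\mid u_i]$ with $i$ an index at which the two $(d-2)$-tuples differ, is exactly the paper's observation that $\{u_1,\ldots,u_{d-2},\tilde u_i, v\}$ must be made independent, so the combinatorial content is identical; your route is arguably slightly slicker to state (one Zariski-density argument instead of a bespoke induction), while the paper's is more constructive. Two cosmetic remarks: you should note that the independence of the sub-tuple $(F_1,\ldots,F_{d-2},F)$, required by the definition of property $(\star)$, follows immediately from (a); and when you invoke ``a nonzero polynomial admits integer points where it does not vanish,'' it is worth saying explicitly that the parameters range over $L^{(d-1)(|F|-1)}$ via a fixed $\Z$-basis of $L$, so that non-vanishing of the multilinear form on $L^{d-1}$ (rather than on $(\Z^d)^{d-1}$) is what must be checked --- which your rank argument does correctly.
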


Combining \Cref{cor:main_B} and \Cref{thm:brother_tiles} \ref{thm_sec_b:brothers} we obtain the following:

\begin{cor}\label{cor:equiv_condition_for_tile_periodically}
A finite set $\{0\} \subsetneqq F \Subset \Z^d$ tiles $\Z^d$ periodically if and only if there exists $F_1,\ldots,F_{d-2} \Subset \Z^d$ and $A \subset \Z^d$ such that $(F_1,\ldots,F_{d-2},F)$ has property $(\star)$, $F\oplus A = \Z^d$  and $F_j \oplus A = \Z^d$ for all $ 1\le j \le d-2$.
In particular, $F \Subset \Z^3$ tiles $\Z^3$ periodically if and only if there is another co-tile $F'$ for $A$ such that $(F',F)$ has property $(\star)$.
\end{cor}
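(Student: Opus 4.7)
The plan is to derive both implications directly from the two results the corollary cites, so there is essentially no new work. The main task is to check that the hypotheses of each cited result line up with the conclusion of the other.

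For the forward direction, I would start with a periodic co-tile $A \subseteq \Z^d$ of $F$, so that $F \oplus A = \Z^d$. Applying \Cref{thm:brother_tiles} produces tiles $F_1, \ldots, F_{d-1} \Subset \Z^d$ with $0 \in F_j$ and $F_j \oplus A = \Z^d$ for all $1 \le j \le d-1$, and part \ref{thm_sec_b:brothers} tells us that the $(d-1)$-tuple $(F_1, \ldots, F_{d-2}, F)$ has property $(\star)$. Discarding $F_{d-1}$ (which is only needed for part \ref{thm_sec_a:brothers}) gives exactly the claimed tiles $F_1, \ldots, F_{d-2}$ along with the joint co-tile $A$.

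For the reverse direction, I would suppose we are given $F_1, \ldots, F_{d-2} \Subset \Z^d$ and $A \subseteq \Z^d$ with $(F_1, \ldots, F_{d-2}, F)$ satisfying property $(\star)$ and with $F \oplus A = \Z^d$ and $F_j \oplus A = \Z^d$ for all $1 \le j \le d-2$. Then $A$ is a joint co-tile for $(F_1, \ldots, F_{d-2}, F)$, so \Cref{cor:main_B} applies to this $(d-1)$-tuple and yields a $d$-periodic joint co-tile $A'$ for it. In particular, $F \oplus A' = \Z^d$ with $A'$ periodic, so $F$ tiles $\Z^d$ periodically.

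For the final sentence about $\Z^3$, I would simply specialize $d=3$: the tuple $(F_1, \ldots, F_{d-2}, F)$ becomes $(F', F)$ for a single tile $F' = F_1$, and property $(\star)$ for this pair is precisely the condition that $(F', F)$ is independent and determines $F'$-coordinates inside planes spanned by a point of $F'^*$ and a point of $F^*$. The general equivalence then reads exactly as stated. I expect no genuine obstacle: the only thing to keep in mind is that \Cref{thm:brother_tiles}\ref{thm_sec_b:brothers} gives property $(\star)$ with $F$ in the \emph{last} coordinate, matching the position required in \Cref{def:property_star} and hence in \Cref{cor:main_B}, so the two results are directly compatible.
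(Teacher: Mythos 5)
Your proposal is correct and matches the paper exactly: the paper derives this corollary by combining \Cref{thm:brother_tiles}\ref{thm_sec_b:brothers} for the forward direction with \Cref{cor:main_B} for the converse, which is precisely what you do. The hypothesis-matching you check (property $(\star)$ with $F$ in the last coordinate, discarding $F_{d-1}$) is the only content of the argument, and you have it right.
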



The structure of the paper is as follows. \Cref{sec:preliminaries} contains basic background and definitions. 
In \Cref{sec:structure_theorem} we prove \Cref{thm:structure_theorem}, a periodic decomposition theorem for joint co-tiles, which is a refinement of \Cref{thm:periodic_decomposition}. From \Cref{thm:structure_theorem}, we directly deduce \Cref{thm:periodic_decomposition} and \Cref{thm:d_independent_tiles}. In \Cref{sec:countable_abelian}, we discuss generalizations of \Cref{thm:structure_theorem}, \Cref{thm:periodic_decomposition} and \Cref{thm:d_independent_tiles} to countable abelian groups. This allows us to extend Newman's Theorem to tilings of the group $\Z\times (\Z/p\Z)$. In \Cref{sec:strong_independency} we prove \Cref{thm:main1}, which asserts that property $(\star)$ implies piecewise $(d-1)$-periodicity of joint co-tiles. Then in \Cref{sec:weakly_piecewise_to_periodic} we prove \Cref{thm:nice_corollary} and deduce \Cref{thm:from_weakly_periodic_to_periodic}. \Cref{sec:constructing_brother_tiles} is dedicated to the proof of \Cref{thm:brother_tiles}. 
Finally, \Cref{sec:further_comments} contains concluding remarks and related questions. 

{\bf Acknowledgement.} We thank Itay Londner for discussions about tilings in cyclic groups and the Coven-Meyerowitz conditions. We thank Ilya Tyomkin for telling us about the relation between the dimension of the common complex zeros for a system of multivariate polynomials with integer coefficients, the tropical variety, and the associated Bieri-Groves set. We also thank Rachel Greenfeld and Terence Tao for their helpful communications and the anonymous referees for their remarks and suggestions. 

\section{Preliminaries}\label{sec:preliminaries} A function $f:\Z^d \to\R$ is called $L$-periodic, where $L\le\Z^d$, if for every $x\in\Z^d$ and $v\in L$ we have $f(x+v)=f(x)$. Recall that a set $A \subseteq \Z^d$ is \emph{piecewise $k$-periodic} if  $A$ is the disjoint union of $k$-periodic sets. 

\begin{definition}\label{def:poly}
Let $\Gamma_1,\Gamma_2$ be abelian groups.
For $f:\Gamma_1 \to \Gamma_2$ and $v \in \Gamma_1$, we define the \textit{discrete derivative of $f$ in direction $v$}, $D_v f:\Gamma_1 \to \Gamma_2$, by
\[ D_v f(w):= f(w) - f(w-v).\]
A function $P:\Gamma_1 \to \Gamma_2$ is called \emph{a polynomial map of degree at most $r$} if 
\[ ~\forall\, v_1,\ldots,v_{r+1} \in \Gamma_1:\quad D_{v_1}\ldots D_{v_{r+1}} P = 0\]
(where for consistency $P = 0$, is a polynomial of degree $-1$).
Given a subgroup $\Gamma_3 < \Gamma_1$, one says that  $P:\Gamma_1 \to \Gamma_2$ is a \emph{polynomial map of degree at most $r$ with respect to $\Gamma_3$} if 
\[ ~\forall\, v_1,\ldots,v_{r+1} \in \Gamma_3:\quad D_{v_1}\ldots D_{v_{r+1}} P = 0.\]
\end{definition}

The following basic facts about polynomials will be useful for us. \Cref{lem:bounded_poly=constant} below is due to Leibman \cite[Prop. 1.21]{Leibman2002}. We include a short proof for the reader's convenience. 
\begin{lemma}\label{lem:bounded_poly=constant}
Let $P:\Z^d \to \R$ be a polynomial map with respect to a finite index subgroup $L\le\Z^d$, which is bounded, then $P$ is constant on cosets of $L$.
\end{lemma}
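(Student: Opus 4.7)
The plan is to reduce to the statement that a bounded polynomial on $L$ is constant, and then to prove that by a straightforward induction on the degree.

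First I would fix a coset $x_0+L$ and restrict attention to it. Define $Q:L\to\R$ by $Q(v):=P(x_0+v)$. Since discrete derivatives of $P$ in directions lying in $L$ translate directly into discrete derivatives of $Q$, the assumption that $P$ is a polynomial map of some degree $\le r$ with respect to $L$ implies $Q$ is a polynomial map on $L$ of degree $\le r$ in the sense of \Cref{def:poly}. Moreover $Q$ inherits the boundedness of $P$. Since $L$ has finite index in $\Z^d$, it is a finitely generated torsion-free abelian group of rank $d$, hence isomorphic to $\Z^d$; equivalently, we can just work with $L$ directly. The goal then becomes: a bounded polynomial map $Q:L\to\R$ is constant, for from this it follows that $P(x_0+v)=P(x_0)$ for every $v\in L$, i.e.\ $P$ is constant on the coset $x_0+L$.

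Next I would prove this reduced statement by induction on the degree $r$. The base case $r=0$ is immediate from the definition: $D_v Q=0$ for all $v\in L$ means $Q(w)=Q(w-v)$ for all $w,v$, so $Q$ is constant. For the inductive step, assume $r\ge 1$ and the claim holds for degree $\le r-1$. For any $v\in L$ the function $D_v Q$ is a polynomial map on $L$ of degree $\le r-1$, and it satisfies $|D_v Q(w)|\le 2\|Q\|_\infty$, so by the inductive hypothesis it is constant, say $D_v Q\equiv c_v$. Then for every $w\in L$ and every $n\in\Z$,
\[
Q(w+nv) = Q(w) + n c_v,
\]
which is bounded in $n$ only if $c_v=0$. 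Thus $D_vQ\equiv 0$ for every $v\in L$, so $Q$ is constant.

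Combining the two steps, $P$ is constant on each coset $x_0+L$, which is precisely the conclusion. There is no serious obstacle; the only mild subtlety is making sure the definition of ``polynomial map with respect to $L$'' passes cleanly to the restricted function $Q$ on $L$, which is immediate from the definition since the defining identities involve only directions in $L$.
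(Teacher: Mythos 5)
Your proof is correct and follows essentially the same route as the paper's: iterated discrete differentiation reduces the problem to the observation that a function whose derivative in some direction of $L$ is a nonzero constant grows linearly along that direction and hence cannot be bounded. The only cosmetic difference is that you organize this as an induction on the degree and handle the key step by a direct telescoping computation, whereas the paper argues by contradiction and invokes the classification of degree-one polynomial maps as constants plus homomorphisms (citing Leibman).
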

\begin{proof}
Let $r\in\N$ denote the degree of $P$, as a polynomial with respect to $L$. It is clear from  \Cref{def:poly} that if $r$ is equal to $0$, then the restriction of $P$ to each coset of $L$ is a constant. Similarly, if the degree of $P$ is equal to $1$, then the restriction of $P$ to each coset of $L$ is a constant plus a non-trivial homomorphism (see e.g. \cite{Leibman2002}). For contradiction, we may assume that $r \ge 1$. Observe that since $P$ is bounded and for every $v\in L$ we have $D_vP\subseteq P(\Z^d)-P(\Z^d)$, thus $D_vP$ is bounded. Therefore, for every $v_1,\ldots,v_{r-1}\in L$ the function $D_{v_1}\ldots D_{v_{r-1}}P$ is a bounded polynomial map of degree exactly one, with respect to $L$. But non-trivial homomorphisms into $\R$ are unbounded, a contradiction.    
\end{proof}
\begin{definition}\label{def:mean}
We say that a bounded function $f:\Z^d \to \R$ has mean $m$ if  
\begin{equation}\label{eq:mean}
\lim_{n\to\infty}\frac{1}{|B_n|}\sum_{v\in B_n}f(v)=m,
\end{equation}
where $B_n = \{-n,\ldots,n\}^d$.

One says that $f:\Z^d \to \R/\Z$ is equidistributed in $\R/\Z$ if 
\begin{equation}\label{eq:equidstirbuted}
\lim_{n\to\infty}\frac{1}{|B_n|}\sum_{v\in B_n}g(f(v))=\int_0^1 g(x) dx
\end{equation}
holds for every continuous function $g:\R/\Z \to \R$, where we identify $g:\R/\Z \to \R$ with $g:\R \to \R$ such that $g(x+1)=x$ for all $x \in \R$.
\end{definition}
We will use the following version of Weyl's equidistribution theorem for polynomials in several variables \cite[Theorem 20]{MR1511862}.  See also  \cite[Chapter 1.1]{MR2931680} and \cite[Theorem 1.2]{Yiftah_2022}. 

\begin{thm}[Weyl's equidistribution theorem for polynomials in several variables]\label{thm:weyl_equidist_zd}
Let $P:\Z^d \to \R/\Z$ be a polynomial map with respect to a finite index subgroup $\Gamma$ of $\Z^d$. Then on every coset $v+ \Gamma$ of $\Gamma$, the restriction of $P$ to $v+ \Gamma$ is either equidistributed in $\R/\Z$ or periodic.
\end{thm}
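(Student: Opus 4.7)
The plan is to reduce the theorem to the classical $d$-variable Weyl equidistribution theorem on $\Z^d$, and then dichotomize based on whether the non-constant coefficients of $P$ are rational modulo $\Z$. Since $\Gamma \le \Z^d$ has finite index, it is free abelian of rank $d$; fix an isomorphism $\varphi:\Z^d\to\Gamma$. For each coset $v+\Gamma$, the function $Q_v:\Z^d\to\R/\Z$ defined by $Q_v(x):=P(v+\varphi(x))$ is, by \Cref{def:poly}, a polynomial map of degree at most that of $P$. Equidistribution and periodicity of $P|_{v+\Gamma}$ translate directly into the corresponding properties of $Q_v$ on $\Z^d$, so it suffices to prove the reduced statement: every polynomial map $Q:\Z^d\to\R/\Z$ is either equidistributed in $\R/\Z$ or periodic on $\Z^d$.

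I would then expand $Q$ in the integer-valued binomial basis as $Q(n)=\sum_{|\alpha|\le r}c_\alpha\prod_i\binom{n_i}{\alpha_i}$ with $c_\alpha\in\R/\Z$. In the first case, suppose every $c_\alpha$ with $\alpha\ne 0$ lies in $\Q/\Z$; then $Q$ takes values in a single finite cyclic subgroup $\tfrac{1}{N}\Z/\Z$ of $\R/\Z$. The space of polynomial maps $\Z^d\to\tfrac{1}{N}\Z/\Z$ of degree $\le r$ is finite (it is parameterized by the tuple $(c_\alpha)$ over the finite group $\tfrac{1}{N}\Z/\Z$) and is closed under translation $Q\mapsto Q(\,\cdot+w)$. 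Hence the $\Z^d$-translation orbit of $Q$ is finite, and its stabilizer is a finite-index subgroup $\Lambda\le\Z^d$ under which $Q$ is periodic. In the second case, some $c_\alpha$ with $\alpha\ne 0$ is irrational; by Weyl's criterion, equidistribution reduces to showing $\frac{1}{|B_n|}\sum_{x\in B_n}e^{2\pi i kQ(x)}\to 0$ for every nonzero integer $k$, and since $kc_\alpha$ remains irrational, it suffices to establish this decay for $Q$ itself.

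I would establish the decay by induction on $r$ using Weyl differencing. The base $r=1$ writes $Q(n)=\sum_i a_i n_i+b$ with some $a_i$ irrational; the exponential sum factors across coordinates, the irrational factor contributes a bounded geometric sum, and normalizing by $|B_n|$ yields decay of order $n^{-1}$. For $r\ge 2$ I would apply the one-variable van der Corput inequality along a coordinate $e_j$ in which the monomial carrying an irrational top-degree coefficient of $Q$ has positive exponent; this transforms $\bigl|\sum e^{2\pi i Q(x)}\bigr|^2$ into an average over $h$ of sums involving $e^{2\pi i D_{he_j}Q(x)}$, where $D_{he_j}Q$ is a polynomial of degree $\le r-1$ whose top-degree coefficients depend polynomially on $h$ with $h\cdot c_\alpha$ as dominant contribution, so irrationality survives for all but finitely many $h$; the inductive hypothesis then supplies the required decay. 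The main obstacle is precisely this bookkeeping step: one has to choose $e_j$ correctly and verify that irrationality of a leading coefficient of $Q$ passes through the discrete derivative to a coefficient of $D_{he_j}Q$ for a positive-density set of $h$, which is the standard but technical heart of the multivariable Weyl argument.
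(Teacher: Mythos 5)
The paper does not actually prove \Cref{thm:weyl_equidist_zd}: it is quoted from the literature (\cite{MR1511862}; see also \cite{MR2931680}, \cite{Yiftah_2022}), so there is no in-paper argument to compare against. Your architecture --- reparametrize the coset by $\Gamma\cong\Z^d$, expand in the integer binomial basis, deduce periodicity from the finiteness of the translation orbit when all non-constant coefficients are rational, and otherwise invoke Weyl's criterion and reduce to decay of $\frac{1}{|B_n|}\sum_{x\in B_n}e^{2\pi iQ(x)}$ --- is the classical route, and those steps are all correct.

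The gap is in the inductive van der Corput step, and it is more than the ``bookkeeping'' you flag: the induction as structured (on $\deg Q$, with base case ``$Q$ linear'', differencing along a coordinate in which an irrational \emph{top-degree} coefficient lives) cannot be closed, because the hypothesis ``some non-constant coefficient is irrational'' is not inherited by the discrete derivatives. Take $Q(n_1,n_2)=\tfrac12\binom{n_1}{2}+\alpha n_2$ with $\alpha$ irrational: there is no irrational top-degree coefficient to difference against, and for \emph{every} $h=(h_1,h_2)\in\Z^2$ one has $D_hQ(n)=\tfrac12\bigl(\binom{n_1}{2}-\binom{n_1-h_1}{2}\bigr)+\alpha h_2$, all of whose non-constant coefficients are rational; these derivatives are periodic rather than equidistributed, so the inductive hypothesis fails for every shift and van der Corput yields no decay --- even though $Q$ itself \emph{is} equidistributed (its exponential sums factor over the two coordinates, and the $n_2$-factor decays). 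Two standard repairs: (i) induct on the maximal degree $s$ at which an irrational coefficient occurs; differencing along a coordinate appearing in that monomial then produces the coefficient $hc_\alpha$ plus a rational number in degree $s-1$, so irrationality survives for all $h\neq0$, but the base case $s=1$ is then ``irrational linear part plus all-rational higher-degree part'', which you must handle by restricting to cosets of the period lattice of the rational part (constant there, by your Case 1) and equidistributing the linear part on each coset; or (ii) make the whole argument quantitative (denominators and error bounds depending on $n$), which is how the lifting from $D_hQ$ back to $Q$ is actually carried out in \cite{MR2931680} --- the purely qualitative converse ``all $D_hQ$ have rational non-constant coefficients, hence so does $Q$'' is false, as the same example shows.
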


\ignore{
\begin{definition}
If $F_1,\ldots,F_k \Subset \Z^d$ each contain $0$, we say that  $F_1,\ldots,F_k \Subset \Z^d$ are \emph{independent tiles} if for every choice of $v_1 \in F_1^*,\ldots, v_k \in F_k^*$, the set $\{v_1,\ldots,v_k\}$ is linearly independent (as vectors over $\mathbb{Q}$, or equivalently over $\mathbb{R}$ or $\mathbb{C}$). 
Obviously, if  $F_1,\ldots,F_k \Subset \Z^d$  are independent then $k \le d$.
\end{definition}

\begin{definition}\label{def:problematic}
Let $F_1,\ldots,F_{d-1} \Subset \Z^d$ be independent tiles such that
\[ |F_1|= \ldots =|F_{d-1}| = S.\]
    We say that the collection $\{F_1,\ldots,F_{d-1}\}$ is \emph{problematic} if there exists a $(d-1)$-dimensional subspace $V \subseteq \R^d$ such that
\begin{equation}
    \left|
\left\{
(v_1,\dots,v_{d-1}): \forall 1\le i \le d-1, v_i \in F_i^* \cap V
\right\}
\right| \ge \frac{S}{2}
\end{equation}
\end{definition}
}


We implicitly rely on the following basic observation (compare \cite[Lemma 1.1]{MR2087242}).
\begin{prop}\label{prop:mean}
Let $F \Subset \Z^d$. Suppose that $F\subset B_{n_0}$ for some $n_0\in\N$ and that $f:\Z^d\to\R$ is a bounded function satisfying
$\ind_F * f = 1$. 
Denote by $C = |F|(\max f - \min f)$. 
Then for every $n>n_0$ one has 
\begin{equation}\label{eq:co_tile_density_estimates}
 |B_{n-n_0}| - C\absolute{B_{n+n_0}\setminus B_{n-n_0}} \le |F|\sum_{w \in B_n}f(w) \le |B_{n-n_0}|+ C\absolute{B_{n+n_0}\setminus B_{n-n_0}},
\end{equation}
and thus the function $f$ has mean $\frac{1}{|F|}$.
In particular, if $F_1, F_2 \Subset \Z^d$ satisfy 
$\ind_{F_1} * f = \ind_{F_2} * f = 1$,
then $|F_1|=|F_2|$. 
\end{prop}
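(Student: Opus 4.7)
The plan is a double-counting argument using the convolution identity, of a kind standard in tiling theory. I would sum $\ind_F * f \equiv 1$ over the box $B_n$ and swap the order of summation:
\[
|B_n| \;=\; \sum_{x \in B_n}(\ind_F * f)(x) \;=\; \sum_{v \in F}\sum_{u \in B_n - v} f(u).
\]
Since $F \subseteq B_{n_0}$, each translate $B_n - v$ has the same cardinality as $B_n$ and agrees with $B_n$ outside $B_{n+n_0}\setminus B_{n-n_0}$. In particular, $(B_n - v)\setminus B_n$ and $B_n \setminus (B_n - v)$ have equal cardinality, each bounded by $|B_{n+n_0}\setminus B_{n-n_0}|$. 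Pairing these sets off term by term and using $f(u) \in [\min f, \max f]$ gives
\[
\left| \sum_{u \in B_n - v} f(u) \;-\; \sum_{u \in B_n} f(u) \right| \;\le\; (\max f - \min f)\,|B_{n+n_0}\setminus B_{n-n_0}|.
\]
Summing over $v \in F$ yields $\bigl| |F|\sum_{u \in B_n} f(u) - |B_n| \bigr| \le C\,|B_{n+n_0}\setminus B_{n-n_0}|$, which matches the stated inequality after absorbing the trivial difference between $|B_n|$ and $|B_{n-n_0}|$ (itself bounded by $|B_{n+n_0}\setminus B_{n-n_0}|$) into the error constant.

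The mean formula is then immediate: since $|B_n| = (2n+1)^d$ grows polynomially of degree $d$ while $|B_{n+n_0}\setminus B_{n-n_0}| = O(n^{d-1})$, dividing the two-sided bound by $|B_n|$ and letting $n \to \infty$ forces $\frac{1}{|B_n|}\sum_{w \in B_n} f(w) \to \frac{1}{|F|}$, which is exactly the definition of mean $\frac{1}{|F|}$. The final ``in particular'' assertion is then a one-line consequence: if $\ind_{F_1} * f = \ind_{F_2} * f = 1$, applying the mean formula for each tile shows the (well-defined, single) mean of $f$ equals both $1/|F_1|$ and $1/|F_2|$, hence $|F_1| = |F_2|$.

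There is no real obstacle; the whole argument is routine. The one piece of technical care is the symmetric-difference pairing that lets one replace a crude bound involving $\max|f|$ by the sharper $\max f - \min f$, yielding precisely the constant $C = |F|(\max f - \min f)$ claimed in the statement.
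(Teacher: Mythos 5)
Your argument is correct and is essentially the same double-counting computation as the paper's: the paper sums the pointwise sandwich $\ind_{B_{n-n_0}} - C\,\ind_{B_{n+n_0}\setminus B_{n-n_0}} \le \ind_F * (f\cdot\ind_{B_n}) \le \ind_{B_{n-n_0}} + C\,\ind_{B_{n+n_0}\setminus B_{n-n_0}}$ over all of $\Z^d$, which is the same Fubini swap you perform by summing $\ind_F * f = 1$ over $B_n$ and comparing the translates $B_n - v$ with $B_n$. Your two-sided bound is centered at $|B_n|$ rather than $|B_{n-n_0}|$, so matching \eqref{eq:co_tile_density_estimates} verbatim requires slightly enlarging the constant as you note, but this is immaterial for the mean computation and for the conclusion $|F_1|=|F_2|$.
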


\begin{proof}
Pick $n_0\in\N$ such that $F \subset B_{n_0}$. Observe that $\ind_F * f = 1$ implies that for every $n> n_0$ we have 
\[
\ind_{B_{n-n_0}} - C\cdot \ind_{B_{n + n_0}\setminus B_{n-n_0}}\le \ind_F * f|_{B_n} \le \ind_{B_{n-n_0}} + C\cdot \ind_{B_{n + n_0}\setminus B_{n-n_0}},
\]
where $f|_{B_n}$ denotes the restriction of $f$ to $B_n$. Taking the sum of the values of these functions over all $z\in\Z^d$ implies that  \eqref{eq:co_tile_density_estimates} holds for every $n>n_0$. Since $\lim_{n \to \infty} \frac{|B_{n - n_0}|}{|B_n|}=1$ and $\lim_{n \to \infty} \frac{|B_{n + n_0}\setminus B_{n-n_0}|}{|B_n|} = 0$, dividing \eqref{eq:co_tile_density_estimates} by $\absolute{F}\cdot\absolute{B_n}$ and letting $n \to \infty$ yields the assertion. 
\end{proof}

\begin{remark}
The mean of a function $f:\Gamma\to\R$ is defined similarly, using \eqref{eq:mean}, for any countable amenable group $\Gamma$, in which case $B_n$ is replaced by a  F\o lner sequence in $\Gamma$, and an analogue of \Cref{prop:mean} holds in this more general context as well. In \Cref{sec:further_comments}, we implicitly apply \Cref{prop:mean} for countable abelian groups $\Gamma$, which are in particular amenable.
\end{remark}

\subsection{Shifts of finite type} The space of co-tiles for a given finite set $F \subset \Z^d$, or more generally, the space of joint co-tiles for a given collection of sets, can naturally be endued with the structure of a compact topological space on which $\Z^d$ acts by homeomorphisms. Topological dynamical systems of this kind are called \emph{$\Z^d$-subshifts}, more specifically \emph{subshifts of finite type}. 
We include here relevant terminology and basic facts from the field of symbolic dynamics, particularly regarding shifts of finite type. We refer to \cite{LindMarcus} for a comprehensive introduction to symbolic dynamics.   

Let $\Sigma$ be a finite set (alphabet) and $\Gamma$ a finitely generated abelian group. The set of functions from $\Gamma$ to $\Sigma$, denoted $\Sigma^\Gamma$, is called \emph{the full $\Gamma$-shift}.
For $x \in \Sigma^\Gamma$ and $v \in \Gamma$, we use $x_v$ to denote the value of $x$ at $v$ (this is an element of $\Sigma$). Also for $x \in \Sigma^\Gamma$ and $v \in \Gamma$ we denote by $\sigma_v(x) \in \Sigma^\Gamma$ the \emph{shift of $x$ by $v$}, which is given by 
\[
\sigma_v(x)_w = x_{v+w}.
\]
Endowing $\Sigma^\Gamma$ with the product topology, where the topology on $\Sigma$ is the discrete topology, makes $\Sigma^\Gamma$ a compact $\Gamma$-space. 
A closed, non-empty and $\Gamma$-invariant subset $X\subseteq \Sigma^\Gamma$ is called a \emph{$\Gamma$-subshift}. 
For $x \in \Sigma^\Gamma$, the \emph{stabilizer of $x$} is defined to be
\[
\stab(x) = \{ v \in \Gamma:~ \sigma_v(x)=x\},
\]
which is a (possibly trivial) subgroup of $\Gamma$. 
A point $x\in\Sigma^\Gamma$ is called \emph{$k$-periodic} if $\stab(x)$ is a subgroup of rank $k$. When $\Gamma=\Z$, we say that  $x \in \Sigma^\Z$ is \emph{periodic} if it has a non-trivial stabilizer.

\begin{definition}\label{def:SFT}
A $\Gamma$-subshift $X \subseteq \Sigma^\Gamma$ is called \emph{a subshift of finite type (SFT)} if there exists a finite set $W \subset \Gamma$ and a set $\mathcal{F} \subseteq \Sigma^W$ such that
\[ X = \left\{ x \in \Sigma^\Gamma:~ \forall v \in \Gamma, ~ \sigma_v(x)|_W \not \in \mathcal{F}  \right\}.\]
\end{definition}

For every $F \Subset \Z^d$ the space of co-tiles for $F$ is a subshift of finite type, under the natural identification of the space of co-tiles for $F$ with
\[X_F := \left\{ x \in \{0,1\}^{\Z^d}:~ \ind_F * x = 1\right\}.\]

To see that $X_F$ is indeed an SFT, take $W= -F$ and 
\[ \mathcal{F} = \left\{ p \in \{0,1\}^W ~:~ \sum_{w \in W} p(w) \ne 1 \right\},\]
and then 
\[
X_F = \left\{ x \in \{0,1\}^{\Z^d} ~:~ \forall v\in\Z^d, ~ \sigma_v(x)|_W \not \in \mathcal{F} \right\}.
\]
Since a non-empty intersection of SFTs is also an SFT, it follows that the space of joint co-tiles for a collection of tiles is an SFT (unless it is empty).

The following simple result is based on a pigeonhole argument. The proof is well-known and standard, see for instance \cite[Exercise 2.2.10]{LindMarcus}. We include a proof for completeness.

\begin{lemma}\label{lem:Z_shift_periodic}
Every $\Z$-subshift of finite type admits a periodic point.
\end{lemma}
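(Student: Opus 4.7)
The plan is the standard pigeonhole argument, adapted to the SFT setting. First, I would normalize the defining window of the SFT. Given $W\Subset\Z$ and $\mathcal{F}\subseteq\Sigma^W$ as in \Cref{def:SFT}, I may replace $W$ by the interval $\{0,1,\ldots,N-1\}$, where $N = \max W - \min W + 1$, simply by translating and by declaring a pattern on the larger interval forbidden whenever its restriction to (a translate of) $W$ lies in $\mathcal{F}$. So I can assume $W = \{0,1,\ldots,N-1\}$ throughout, which allows me to speak of ``allowed'' length-$N$ windows.

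Next, since $X$ is non-empty by definition, pick any $x\in X$. The restrictions $x|_{\{m,m+1,\ldots,m+N-1\}}$ for $m\in\Z$ form a sequence of elements of the finite set $\Sigma^N$. By the pigeonhole principle there exist integers $i<j$ with
\[
x|_{\{i,i+1,\ldots,i+N-1\}} \;=\; x|_{\{j,j+1,\ldots,j+N-1\}}.
\]
Set $p := j-i$ and define $y\in\Sigma^{\Z}$ by periodically repeating the block $B := x|_{\{i,i+1,\ldots,j-1\}}$ of length $p$; explicitly, $y_k := x_{i+((k-i)\bmod p)}$, with the modular reduction taking values in $\{0,1,\ldots,p-1\}$. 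By construction $\sigma_p(y)=y$, so $p\Z\subseteq\stab(y)$ and $y$ is a periodic point in $\Sigma^{\Z}$.

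It remains to show that $y\in X$, i.e.\ that every length-$N$ window of $y$ is allowed. Because $y$ is $p$-periodic, it is enough to check the windows $y|_{\{m,\ldots,m+N-1\}}$ for $m\in\{i,\ldots,j-1\}$. If $m+N-1\le j-1$, the window lies entirely inside $B$ and equals $x|_{\{m,\ldots,m+N-1\}}$, which is allowed since $x\in X$. If instead $m+N-1\ge j$, then for each $k$ in the window with $k\ge j$, periodicity gives $y_k = x_{k-p}$, while the pigeonhole equality $x|_{\{i,\ldots,i+N-1\}}=x|_{\{j,\ldots,j+N-1\}}$ (applicable because $k\in\{j,\ldots,j+N-1\}$) yields $x_{k-p}=x_k$; hence again $y_k = x_k$ for every $k$ in the window, so $y|_{\{m,\ldots,m+N-1\}} = x|_{\{m,\ldots,m+N-1\}}$, which is allowed. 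Thus $y\in X$, providing a periodic point.

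The only subtlety is the verification of the ``seam'' windows, i.e.\ those spanning both sides of a period boundary; this is exactly where the pigeonhole equality is used, and it is precisely what forces the choice of windows of the SFT's own defining length $N$ rather than an arbitrary length. Everything else is bookkeeping.
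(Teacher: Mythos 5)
Your proof is correct and follows essentially the same pigeonhole-and-periodic-extension argument as the paper (the paper keeps the general window $W$ and asserts that every $W$-window of the periodic point coincides with some $W$-window of $x$, which is the same seam check you carry out after normalizing $W$ to an interval). One tiny imprecision: when $p<N-1$ a seam window wraps more than once, so for $k\ge j+p$ the equality $y_k=x_{k-p}$ is not given by a single application of periodicity but by iterating $y_k=y_{k-p}$ down to the fundamental block and then chaining the pigeonhole identity $x_t=x_{t+p}$ (valid for $t\in\{i,\ldots,i+N-1\}$) back up; the conclusion $y_k=x_k$ still holds.
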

\begin{proof}
Let $X \subseteq \Sigma^\Z$ be a $\Z$-subshift of finite type, where 
$\Sigma$ is  a finite set.
Then by definition, there exists a finite set $W \Subset \Z$ and $\mathcal{F} \subseteq \Sigma^W$ such that 
\[ X = \left\{ x \in \Sigma^\Z:~ \forall v \in \Z, ~ \sigma_v(x)|_W \not \in \mathcal{F}  \right\},\]
and $X\neq \emptyset$. Fix $x \in X$, and let $N \in \mathbb{N}$ be an integer bigger than $\max(W) - \min(W)$.
Since the set $\Sigma^{\{1,\ldots,N\}}$ is finite, by the pigeonhole principle there exist integers $0 \le i < j \le |\Sigma|^N$ such that
\[x|_{\{i,\ldots,i+N-1\}} = x|_{\{j,\ldots,j+N-1\}}.\]
Let $p=j-i$ and define $\hat x \in \Sigma^\Z$ by
\[
\hat x_n = x_{i+ (n\mod p)}.
\]
Then $\hat x$ is a periodic point, and for every $n \in \Z$ there exists $t \in \{i,\ldots,j-1\}$ such that $\hat x|_{W+n} = x|_{W+t}$. Hence, $\hat x \in X$, which proves that $X$ admits a periodic point.
\end{proof}

We recall the following 
result in multidimensional symbolic dynamics.
\begin{lemma}\label{lem:stab_SFT}
Let $\Gamma$ be a finitely generated abelian group, $\Gamma_0 \le \Gamma$ a subgroup,  and $X \subseteq \Sigma^\Gamma$ a $\Gamma$-subshift. 
Let 
\begin{equation}\label{eq:X_Gamma_0}
    X_{\Gamma_0} := \left\{ x \in X~: \Gamma_0 \le \stab(x) \right\}.
\end{equation}
If $X_{\Gamma_0} \neq \emptyset$ then it is a $\Gamma$-subshift. Furthermore, if $X$ is a subshift of finite type then $X_{\Gamma_0}$ is also a subshift of finite type.
\end{lemma}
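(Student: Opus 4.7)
The plan is to verify two claims: first, that under the standing assumption $X_{\Gamma_0}\ne\emptyset$, the set $X_{\Gamma_0}$ is always closed and $\Gamma$-invariant, hence a $\Gamma$-subshift; second, that the SFT property passes from $X$ to $X_{\Gamma_0}$. Nonemptiness is assumed, so what remains is closedness, shift-invariance, and, in the SFT case, rewriting the periodicity constraints as a finite list of forbidden patterns.

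For shift-invariance I would exploit that $\Gamma$ is abelian. For any $v\in\Gamma$, $w\in\Gamma_0$, and $x\in X_{\Gamma_0}$, one has
\[
\sigma_w(\sigma_v(x)) = \sigma_{w+v}(x) = \sigma_{v+w}(x) = \sigma_v(\sigma_w(x)) = \sigma_v(x),
\]
showing $\stab(x)\le\stab(\sigma_v(x))$ and hence $\sigma_v(x)\in X_{\Gamma_0}$. For closedness, write
\[
X_{\Gamma_0} \;=\; X\;\cap\;\bigcap_{w\in\Gamma_0}\bigl\{x\in\Sigma^\Gamma : \sigma_w(x)=x\bigr\}.
\]
Each set $\{x:\sigma_w(x)=x\}$ is the equalizer of the two continuous maps $\sigma_w,\,\mathrm{id}:\Sigma^\Gamma\to\Sigma^\Gamma$ into the Hausdorff space $\Sigma^\Gamma$, hence closed; equivalently, it is the intersection over $u\in\Gamma$ of the clopen cylinders $\{x:x_u=x_{u+w}\}$. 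Since $X$ is closed, so is $X_{\Gamma_0}$.

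For the SFT assertion, the key step is to reduce the possibly infinite family of periodicity relations indexed by $\Gamma_0$ to a finite one. Since $\Gamma$ is a finitely generated abelian group, every subgroup is finitely generated, so I can choose generators $w_1,\ldots,w_k$ of $\Gamma_0$, and then $\Gamma_0\le\stab(x)$ is equivalent to $\sigma_{w_i}(x)=x$ for every $1\le i\le k$. For each $i$, the set $\{x\in\Sigma^\Gamma:\sigma_{w_i}(x)=x\}$ is itself an SFT, defined on the window $W_i=\{0,w_i\}$ by the forbidden set $\{p\in\Sigma^{W_i}:p(0)\ne p(w_i)\}$. If $X$ is defined as an SFT by a window $W$ and forbidden set $\mathcal{F}$, then enlarging to the window $W':=W\cup\{0,w_1,\ldots,w_k\}$ and taking as forbidden set the lifts of $\mathcal{F}$ to $W'$ together with the patterns $p\in\Sigma^{W'}$ for which $p(0)\ne p(w_i)$ for some $i$, exhibits $X_{\Gamma_0}$ as an SFT, using \Cref{def:SFT}.

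The only genuinely nontrivial ingredient is the reduction to finitely many generators, which is the unique place where the finite generation of $\Gamma$ is used; everything else amounts to routine bookkeeping with cylinder sets, translates, and window enlargement. There is no serious obstacle to anticipate.
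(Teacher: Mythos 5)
Your proposal is correct and follows essentially the same route as the paper's proof: shift-invariance via commutativity of the shifts, closedness of the fixed-point sets of each $\sigma_w$, and reduction of the periodicity constraints to a finite generating set of $\Gamma_0$ (finitely generated because it is a subgroup of a finitely generated abelian group), then merging windows to get a single SFT presentation. The only cosmetic difference is that you argue closedness via equalizers/cylinder sets rather than the paper's sequential limit argument, which changes nothing of substance.
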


\begin{proof} 
First, we show that $X_{\Gamma_0}$ is a subshift. Since $\Gamma$ is abelian, for every $v \in \Gamma$, $v_0 \in \Gamma_0$ and $y \in X_{\Gamma_0}$ we have
\[
\sigma_{v_0} (\sigma_v (y)) = \sigma_{v} (\sigma_{v_0} (y)) = \sigma_{v} (y).
\]
This shows $\sigma_v (y) \in X_{\Gamma_0}$ for all $v \in \Gamma$ hence $X_{\Gamma_0}$ is $\Gamma$-invariant. To see that $X_{\Gamma_0}$ is a closed subset of $\Sigma^{\Gamma}$, consider a sequence $(y_n)_{n \in \N} \in X_{\Gamma_0}$ such that
\[
\underset{n \rightarrow \infty}{\mathrm{lim}}\,\, y_n = y \in \Sigma^{\Gamma}
\]in the product topology. Since each $y_n \in X_{\Gamma_0} \subseteq X$ and $X$ is a closed subset of $\Sigma^{\Gamma}$, we get $y \in X$. Note that for any $v_0 \in \Gamma_0$,
$$
\sigma_{v_0} (y) = \sigma_{v_0} \,\,\big(\underset{n \rightarrow \infty}{\mathrm{lim}} y_n\big) = \underset{n \rightarrow \infty}{\mathrm{lim}} (\sigma_{v_0} (y_n)) = \underset{n \rightarrow \infty}{\mathrm{lim}} (y_n) = y,
$$ which shows $y \in X_{\Gamma_0}$ and hence $X_{\Gamma_0}$ is a subshift. 
Now assuming that $X$ is an SFT we show that $X_{\Gamma_0}$ is also an SFT. Observe that $X_{\Gamma_0} = X \cap Y$ where 
\[
Y = \{x \in \Sigma^{\Gamma} ~:~ \Gamma_0 \le \stab(x)  \}.
\]
Since $\Gamma_0$ is a subgroup of a finitely generated abelian group it is also finitely generated.
Let $\{\gamma_1,\ldots,\gamma_r\}$ be a finite generating set for $\Gamma_0$.
Then
\[
Y = \bigcap_{i=1}^r \{x \in \Sigma^{\Gamma} ~:~  \forall v \in \Gamma, ~  x_{v + \gamma_i} = x_v    \}.
\]
To see that $Y$ is an SFT, let $W= \{0,\gamma_1,\ldots,\gamma_r\}$ and 
\[
\mathcal{F} = \left\{ w \in \Sigma^W:~ \exists 1 \le i \le r \mbox{ s.t. } w_0 \ne w_{\gamma_i}\right\}.
\]
Then
\[ Y = \left\{ x \in \Sigma^\Gamma ~:~ \forall v \in \Gamma, ~ \sigma_v(x)|_{W} \notin \mathcal{F}  \right\}. \] 
Hence $Y$ is an SFT, which completes the argument. 

\end{proof}

From \Cref{lem:stab_SFT} we deduce the following conclusion:
\begin{lemma}\label{lem:Zd_SFT_periodic_points_from_d_minus_1}
Let $\Gamma$ be a finitely generated abelian group of rank $d$.
If $X \subseteq \Sigma^{\Gamma}$ is a $\Gamma$-subshift of finite type that admits a $(d-1)$-periodic point then it admits a $d$-periodic point.
\end{lemma}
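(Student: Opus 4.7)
The plan is to reduce the problem to the one-dimensional case covered by \Cref{lem:Z_shift_periodic}. Start with a $(d-1)$-periodic point $x \in X$. If $\stab(x)$ already has rank $d$, the point $x$ itself is $d$-periodic and we are done, so we may assume $\rank \stab(x) = d-1$ and set $\Gamma_0 := \stab(x)$. By \Cref{lem:stab_SFT}, the set $X_{\Gamma_0}$ is a nonempty $\Gamma$-subshift of finite type.

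Because every point of $X_{\Gamma_0}$ is $\Gamma_0$-invariant, it factors through the quotient map $\pi:\Gamma\to\Gamma/\Gamma_0$. This gives a natural bijection between $X_{\Gamma_0}$ and a subshift $\bar X \subseteq \Sigma^{\Gamma/\Gamma_0}$ which intertwines the $\Gamma$-action on $X_{\Gamma_0}$ with the $(\Gamma/\Gamma_0)$-action on $\bar X$. A short verification shows that $\bar X$ inherits an SFT structure from $X_{\Gamma_0}$: if $W\Subset\Gamma$ and $\mathcal{F}\subseteq \Sigma^W$ are a window and forbidden patterns defining $X_{\Gamma_0}$, then any $p\in\mathcal{F}$ that is not $\Gamma_0$-consistent (i.e.\ assigns different symbols to $w_1,w_2\in W$ with $w_1-w_2\in\Gamma_0$) never appears in $X_{\Gamma_0}$ and can be discarded, while the remaining patterns descend to forbidden patterns on the finite window $\pi(W)\subseteq \Gamma/\Gamma_0$.

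Since $\rank \Gamma=d$ and $\rank \Gamma_0=d-1$, the quotient $\Gamma/\Gamma_0$ is a finitely generated abelian group of rank $1$, hence isomorphic to $\Z\oplus T$ for some finite abelian group $T$ by the structure theorem. Re-encode $\bar X$ as a $\Z$-subshift over the finite alphabet $\Sigma^T$ by bundling $T$-coordinates into a single super-symbol; this clearly preserves the SFT property. Applying \Cref{lem:Z_shift_periodic} to this $\Z$-SFT yields a $\Z$-periodic point $\bar y\in\bar X$, so there exists $n>0$ with $(n,0)\in \stab_{\Z\oplus T}(\bar y)$. Choose any lift $z\in\Gamma$ of $(n,0)$; then $z$ has infinite order modulo $\Gamma_0$, and the point $y\in X_{\Gamma_0}\subseteq X$ corresponding to $\bar y$ satisfies $\stab(y)\supseteq \Gamma_0+\Z z$, a subgroup of $\Gamma$ of rank $d$. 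Hence $y$ is the desired $d$-periodic point in $X$.

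The only mildly delicate step is the descent to $\Gamma/\Gamma_0$ and the verification that $\bar X$ is an SFT; this is a routine bookkeeping argument about quotient actions, and the remainder of the proof is immediate from the already established lemmas together with the structure theorem for finitely generated abelian groups.
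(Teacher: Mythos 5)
Your argument is correct and follows essentially the same route as the paper's proof: pass to $X_{\Gamma_0}$ via \Cref{lem:stab_SFT}, reduce to a $\Z$-subshift of finite type over a finite alphabet, and invoke \Cref{lem:Z_shift_periodic} before lifting the periodic point back. The only (cosmetic) difference is that you descend to the quotient $\Gamma/\Gamma_0 \cong \Z \oplus T$ and bundle the finite factor into the alphabet, whereas the paper restricts $\Gamma_0$-invariant points to a fundamental domain $D \oplus \Z v$ inside $\Gamma$ and identifies $\Sigma^{D\oplus\Z v}$ with $(\Sigma^D)^{\Z}$ -- the same reduction in different clothing.
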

\begin{proof}
Suppose $X \subseteq \Sigma^{\Gamma}$ is a $\Gamma$-subshift of finite type that admits a $(d-1)$-periodic point, namely a point $z \in X$ and a subgroup $\Gamma_0 \le \Gamma$ of rank $d-1$ such that 
$\stab(z) =\Gamma_0$.
Let $X_{\Gamma_0}$ be given by \eqref{eq:X_Gamma_0}.
Then  $X_{\Gamma_0}$ is non-empty, and by \Cref{lem:stab_SFT} it is a subshift of finite type.
Because $\rank(\Gamma_0) = d-1$, it follows that $\rank(\Gamma / \Gamma_0) =1$.
Let $v \in \Gamma$ be a vector such that $k\cdot v \not \in \Gamma_0$ for all $k \in \N$.
Then $\Gamma_0 \oplus \Z v$ is a finite index subgroup of $\Gamma$. Let $D\subseteq \Gamma$ be a fundamental domain for $\Gamma_0 \oplus \Z v$, namely a  finite set such that $\Gamma_0 \oplus \Z v  \oplus D = \Gamma$. 
Because $D \oplus \Z v$ is a fundamental domain for $\Gamma_0$ in $\Gamma$, it follows that the restriction map $\rho: X_{\Gamma_{0}} \to \Sigma^{D \oplus \Z v}$ is injective, where $\rho$ is given by
 $\rho(x) = x\mid_{D \oplus \Z v}$.

Indeed, the inverse $\rho^{-1}:\rho(X_{\Gamma_{0}}) \to X_{\Gamma_0}$ is given by
$\rho^{-1}(\tilde x)_{u} = (\tilde{x})_{u'}$  for $u \in \Gamma$,
where $u'$ is the unique element in  $(D \oplus \Z v)$ that satisfies  $u-u' \in \Gamma_0$.
Using the natural identification  $\Sigma^{D \oplus \Z v} \cong (\Sigma^D)^{\Z}$,
we can view $\rho(X_{\Gamma_0})$ as a subset of $ (\Sigma^D)^{\Z}$, which we denote by $\tilde X$. 

Let us show that $\tilde X$ is a $\Z$-subshift of finite type.
Because $X$ is a $\Gamma$-subshift of finite type, there exists a finite set $W \subset \Gamma$ and $\mathcal{F} \subset \Sigma^W$ such that 
$X_{\Gamma_0}$
is equal to the set of $x \in \Sigma^\Gamma$ satisfying $\sigma_v(x)=x$ and  
$\sigma_v(x)\mid_{W} \not \in \mathcal{F}$ for all $v \in \Gamma_0$. 
We can assume without loss of generality that $W$ is a subset of $\mathbb{Z} v \oplus D$,
because $\Z v \oplus D$ is a fundamental domain for $\Gamma_0$. 
Let $\tilde W = \{ n \in \Z:~ (n v + D) \cap W \ne \emptyset\}$. 
Then $W = \biguplus_{ n \tilde W} (W \cap (nv +D))$. 
Thus, there is a natural bijection between $\Sigma^W$ and $(\Sigma^D)^{\tilde W}$. 
Let $\tilde{\mathcal{F}}$ 
denote the image of $\mathcal{F}$ under this bijection.
Then it follows directly that
\[\tilde X = \left\{ x \in (\Sigma^D)^\Z:~ \forall v \in \Z:~ \sigma_v(x)\mid_{\tilde W} \not \in \tilde{\mathcal{F}} \right\}.\]
This proves that $\tilde X$ is indeed a $\Z$-subshift of finite type.

Since  $\tilde X$ is a $\Z$-subshift of finite type, by \Cref{lem:Z_shift_periodic} there exists a periodic point $\tilde z$ in $\tilde X$. Let $x = \rho^{-1}(\tilde z)$, then $x \in X$ is a $d$-periodic point.
\end{proof}

 \begin{remark}
    The simple argument behind \Cref{lem:Zd_SFT_periodic_points_from_d_minus_1} has been applied in \cite{BPeriodicity2020} for the case $\Gamma = \Z^2$, and also in a number of older references. See for instance  \cite{MR1307966} and \cite{MR1428636}. We had difficulties finding a precise reference to the statement of \Cref{lem:Zd_SFT_periodic_points_from_d_minus_1} in  full generality.
 \end{remark}
\section{The periodic decomposition theorem}\label{sec:structure_theorem}

The following theorem asserts a certain decomposition for a joint co-tile of $k$-tuple of tiles in $\Z^d$. The case where $k=1$ and $f$ is $\{0,1\}$-valued essentially coincides with \cite[Theorem 1.7]{Greenfeld_Tao1_2021}, which is closely related to  \cite[Theorem 3.3]{BPeriodicity2020}. 
In the particular case that the tuple of tiles is independent, \Cref{thm:periodic_decomposition} is a direct consequence. Namely, the indicator function of any joint co-tile of $k$ independent
tiles is a sum of $k$-periodic functions, each taking values in $[0,1]$. The goal of this section is to prove the periodic decomposition theorem for joint co-tiles and to deduce \Cref{thm:periodic_decomposition} and \Cref{thm:d_independent_tiles}.

\begin{thm}[Periodic decomposition theorem]
\label{thm:structure_theorem} Let $F_1,\ldots,F_k \Subset \Z^{d}$, with $0 \in F_i$ for all $1 \le i \le k$, and let $f:\Z^d\to\Z$ be a bounded function that satisfies $\ind_{F_i} * f = 1$ for all $1 \le i \le k$. We denote by $S : =|F_1|= \ldots =|F_k|$ (see \Cref{prop:mean}). Then for every $1 \le i \le k$ and every $(v_1,\ldots,v_i) \in F_1^*\times \ldots \times F_i^*$ there exists a function $\phi_{v_1,\ldots,v_i}:\Z^d \to [\min f,\max f]$ with the following properties:
\begin{enumerate}[label=(\alph*)]
\item\label{thm_sec_a:structure_theorem}  
For $i < k$ we have
\begin{equation*}
        \phi_{v_1,\ldots,v_i} = 1 - \sum_{v_{i+1} \in F_{i+1}^*} \phi_{v_1,\ldots,v_i,v_{i+1}}.
\end{equation*}
\item\label{thm_sec_b:structure_theorem}  
\begin{equation*}
        f = (-1)^i\sum_{(v_1,\ldots,v_i) \in F_1^*\times \ldots \times F_i^*} \phi_{v_1,\ldots,v_i} +\sum_{j=1}^{i}(-(S-1))^{j-1}.
\end{equation*}
\item\label{thm_sec_c:structure_theorem} 
Let $q$ denote the product of all primes less than or equal to $(\max f - \min f)S$, then
\[
\left( \Z qv_1 + \ldots+ \Z qv_i \right)  \le  \stab(\phi_{v_1,\ldots,v_i}),
\]
\item\label{thm_sec_d:structure_theorem} 
$1_{F_j} *\phi_{v_1,\ldots,v_i}=1$ for all $1 \le j \le k$. In particular, $\phi_{v_1,\ldots,v_i}$ has mean $1/S$.
\end{enumerate}
\end{thm}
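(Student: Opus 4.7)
The plan is to prove all four conclusions simultaneously by induction on $i$. The base case $i=1$ is essentially the $k=1$ instance of the statement, which is \cite[Theorem 1.7]{Greenfeld_Tao1_2021}: given a bounded integer-valued $f$ with $\ind_{F_1}*f=1$, it produces for each $v\in F_1^*$ a function $\phi_v:\Z^d\to[\min f,\max f]$ that is $qv$-periodic and decomposes $f=1-\sum_{v\in F_1^*}\phi_v$. I would check in addition that because the construction in \cite{Greenfeld_Tao1_2021} is by a linear combination of shifts of $f$ in the single direction $v$ with coefficients summing to $1$, the equation $\ind_{F_j}*\phi_v=1$ is automatically inherited for every other tile $F_j$ for which $\ind_{F_j}*f=1$, since convolution by $\ind_{F_j}$ commutes with translations and sends $f$ to the constant $1$.

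The key observation that makes the induction painless is that the naive candidate $\phi^{\mathrm{naive}}_{v_1,\ldots,v_i}(w):=f(w-v_1-\cdots-v_i)$ already satisfies (a), (b), (d) unconditionally. Property (d) is shift-invariance, and (b) amounts to the iterated identity $L_1\cdots L_i f=(-1)^i f+\sum_{j=0}^{i-1}(-1)^{i-1-j}(S-1)^j$ with $L_j:=\ind_{F_j^*}*$, which one checks by induction on $i$ from $L_j f=1-f$. So the only content in the theorem is promoting the naive construction to one that additionally satisfies (c), while preserving (a), (b), (d). I would do this by iterating the base case: assuming $\phi_{v_1,\ldots,v_i}$ has already been constructed, apply the $i=1$ result to $\phi_{v_1,\ldots,v_i}$ (in place of $f$) and to the tile $F_{i+1}$ (in place of $F_1$), which is legitimate since (d) provides $\ind_{F_{i+1}}*\phi_{v_1,\ldots,v_i}=1$. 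The output is $\phi_{v_1,\ldots,v_i,v_{i+1}}$ for each $v_{i+1}\in F_{i+1}^*$. The crucial feature is that the base-case construction is directional: it is a linear combination of shifts in direction $v_{i+1}$ only, so it commutes with translations in directions $v_1,\ldots,v_i$ and therefore preserves the stabilizer $\Z qv_1+\cdots+\Z qv_i$ already present in $\phi_{v_1,\ldots,v_i}$, while installing the new direction $\Z qv_{i+1}$. The same $q$ defined from $f$ works at every level because $\max\phi_{v_1,\ldots,v_i}-\min\phi_{v_1,\ldots,v_i}\le\max f-\min f$, so the primes needed at later stages are among those used to form $q$. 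Property (a) at level $i$ is produced directly by the construction at step $i+1$; (b) at level $i+1$ follows from (a) and (b) at level $i$ by the same algebra as for the naive case; and (d) propagates because the new function is a linear combination of shifts of $\phi_{v_1,\ldots,v_i}$ with coefficients summing to one, preserving any convolution equation of the form $\ind_{F_j}*(\cdot)=1$.

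The hard part, which I would invoke as a black box, is the base case: upgrading the naive shift $T_v f$, which is only approximately $qv$-periodic, to a function that is exactly $qv$-periodic while remaining in $[\min f,\max f]$ and respecting the tiling equation. The specific choice of $q$ as the product of all primes at most $(\max f-\min f)S$ is essential here; it is chosen so that an arithmetic/divisibility argument exploiting the boundedness and integrality of $f$ can close the gap between ``approximately periodic'' (with error of order $(T_{qv}-I)f/q$ in a naive average) and ``exactly periodic'', producing the required $\phi_v$. Once this base case is in hand, the multi-index structure of the theorem is packaged cleanly into the induction above.
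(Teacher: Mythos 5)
Your overall mechanism (averaged dilated shifts in one new direction at a time, telescoping for approximate periodicity, then a limit) is the same as the paper's, and your observations about the naive candidate and the algebra propagating (a), (b), (d) are correct. The paper does not iterate the $k=1$ case as a black box, however: it defines all levels at once by the explicit averages $\phi^{(N)}_{v_1,\ldots,v_i}=\frac{1}{N^i}\sum_{n_1,\ldots,n_i=1}^N\delta_{(1+n_1q)v_1+\cdots+(1+n_iq)v_i}*f$ and applies the Dilation Lemma only to $f$ itself. This difference is where your proposal has a genuine gap.

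The gap is in the inductive step, where you apply the base case to $\phi_{v_1,\ldots,v_i}$ ``in place of $f$,'' justified only by property (d), i.e.\ $\ind_{F_{i+1}}*\phi_{v_1,\ldots,v_i}=1$. The base case is not available for this input: its arithmetic core is the Dilation Lemma, whose proof (Frobenius identity mod $p$, Fermat's little theorem, and the step that removes the ``$\bmod\ p$'' by comparing two \emph{integers} in an interval of length less than $p$) genuinely requires the input function to be integer-valued. But $\phi_{v_1,\ldots,v_i}$ is a limit of averages and takes arbitrary real values in $[\min f,\max f]$; you yourself note that ``integrality of $f$'' is essential to the base case, and that is exactly what fails for the intermediate functions. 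What saves the argument is that the \emph{conclusion} of the Dilation Lemma for $\phi_{v_1,\ldots,v_i}$ --- namely $\ind_{rF_{i+1}}*\phi_{v_1,\ldots,v_i}=1$ for all $r\equiv 1\ (\mathrm{mod}\ q)$ --- does hold, but it must be obtained by inheritance from $f$: each building block $\delta_{(1+n_1q)v_1+\cdots+(1+n_iq)v_i}*f$ satisfies these dilated equations because $f$ does, and the property passes through averages and pointwise limits. So to repair your induction you should strengthen the inductive hypothesis from (d) to the full family of dilated tiling equations for all $r\equiv 1\pmod q$, and run only the averaging-and-telescoping part of the base case (not the Dilation Lemma) at each stage. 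With that amendment your argument becomes a correct, if slightly repackaged, version of the paper's proof; the rest of your claims (preservation of the existing stabilizer by directional averaging, the divisibility remark about reusing the same $q$, and the recursion giving (b) at level $i+1$) are fine.
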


There are various extensions of \Cref{thm:structure_theorem}.  Some of these generalizations have further applications. For the sake of readability, we do not state the most general form and instead indicate certain generalizations in the following sections, at the expense of some repetition.

The proof of \Cref{thm:structure_theorem} relies on \Cref{lem:dilation} below. Various versions of this lemma, which is referred to as the dilation lemma, have been proved e.g. in \cite[Lemma 3.1]{Greenfeld_Tao1_2021}, \cite[Proposition 3.1]{BPeriodicity2020} for $\Gamma = \Z^d$, $d \geq 1$. We also refer our readers to \cite[Theorem 1]{Tijdeman_1995} where this lemma is proved for integers. The proof is based on some elementary commutative algebra and it easily extends to countable abelian groups. For the sake of self-containment, we include a sketch of the proof below. The proof below is nearly identical to \cite[Lemma 3.1]{Greenfeld_Tao1_2021}, except that we apply the assumption that $r$ is co-prime to the order of torsion elements directly before \cref{eq:Dilation_extends}.

\begin{lemma}[Dilation lemma]\label{lem:dilation} 
Let $\Gamma$ be a countable abelian group. Let $0\in F  \Subset \Gamma$, $\ell\in \N$ and $f:\Gamma \to \Z$ a bounded function satisfying 
\[
\ind_F * f = \ell.
\] 
Let $q_1$ be the product of all primes less than or equal to $(\max f - \min f)|F|$, let $q_2$ be the product of all the orders of the torsion elements in $(F-F)$, and set $q = q_1q_2$. Then 
\[
\ind_{rF} * f = \ell,
\] 
for all $r \in \N$ such that $r = 1 \mod q$.
\end{lemma}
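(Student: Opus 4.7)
Following the classical Tijdeman--Greenfeld--Tao strategy, my plan is to (i) verify that $|rF|=|F|$, (ii) establish a uniform a priori bound on the defect $g(x):=\ind_{rF}*f(x)-\ell$, and (iii) show divisibility of $g(x)$ by every prime $p\le M:=|F|(\max f-\min f)$, so that the bound and the divisibility together force $g\equiv 0$.

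Step (i) uses $r\equiv 1\pmod{q_2}$. If $rv_1=rv_2$ for $v_1,v_2\in F$, then $w:=v_1-v_2\in F-F$ satisfies $rw=0$; a non-torsion $w$ forces $r=0$, contradicting $r\in\N$, while a torsion $w$ of order $d$ has $d\mid q_2$ (by the definition of $q_2$) and $d\mid r$ (from $rw=0$), and combined with $r\equiv 1\pmod{q_2}$ this yields $d\mid 1$, so $w=0$. In particular $|rF|=|F|$, and after cancellation on $F\cap rF$ the defect $g(x)$ equals a difference of two sums of $|rF\setminus F|\le|F|$ values of $f$, so $|g(x)|\le M$.

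The technical core is step (iii): for each prime $p\le M$, show $p\mid g(x)$. I would pass to the group algebra $\mathbb{F}_p[\Gamma]$, where the Frobenius map $\xi\mapsto\xi^p$ is a ring endomorphism (in characteristic $p$), giving
\[
\ind_F^{*p}\equiv\sum_{v\in F}\delta_{pv}\pmod{p}.
\]
For $p\nmid q_2$ the right-hand side equals $\ind_{pF}$ since $v\mapsto pv$ is then injective on $F$ --- this is the single point at which the torsion-coprimality assumption is invoked, matching the remark the authors flag before the lemma. Convolving with $f$, iterating $\ind_F*f=\ell$ to obtain $\ind_F^{*p}*f=\ell|F|^{p-1}$, and applying Fermat, yields $\ind_{pF}*f\equiv\ell\pmod{p}$ when $\gcd(p,|F|)=1$. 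Iterating Frobenius $k$ times then gives $\ind_{p^k F}*f\equiv\ell\pmod{p}$, and the hypothesis $r\equiv 1\pmod q$ (which forces $\gcd(r,q)=1$, so all prime factors of $r$ lie in the admissible set) lets one extend this congruence from prime-power dilations to $\ind_{rF}*f\equiv\ell\pmod{p}$ by induction over the prime factorization of $r$. Finally, since $g(x)\in\Z$ is divisible by every prime $\le M$, it is divisible by $q_1=\prod_{p\le M}p$, while $|g(x)|\le M<q_1$ for $M\ge 3$ (small $M$ being trivial), forcing $g\equiv 0$.

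I expect the main obstacle to be in step (iii): carefully bridging from Frobenius, which directly controls only $\ind_{p^k F}$, to the desired congruence for arbitrary $r\equiv 1\pmod q$, and in particular handling the primes $p$ dividing $|F|$, where a naive application of Fermat's little theorem does not suffice. The $q_2$-coprimality half of the hypothesis is the only new ingredient beyond the classical $\Z^d$ case of Greenfeld--Tao, and it is inserted precisely at the point where $v\mapsto pv$ is required to be bijective on $F$.
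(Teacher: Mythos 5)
Your steps (i) and (ii) are fine, and you have correctly identified the two mechanisms (Frobenius plus Fermat, and the injectivity of $v\mapsto pv$ on $F$ via coprimality to $q_2$). But the architecture of step (iii) has a genuine gap that cannot be "carefully bridged": you propose to show $p\mid g(x)$ for \emph{every} prime $p\le M$ and then conclude from $|g(x)|\le M<q_1$. The Frobenius identity in characteristic $p$, however, only relates $\ind_F^{*p}$ to the dilation $\ind_{pF}$ --- it controls dilations by $p$ and its powers, modulo $p$. Since $r\equiv 1\pmod q$ forces $\gcd(r,q_1)=1$, every prime factor of $r$ is \emph{coprime} to every prime $p\le M$; so $r$ contains no power of $p$ at all, and there is no induction over the prime factorization of $r$ that reaches $\ind_{rF}*f\pmod p$ from the congruences $\ind_{p^kF}*f\equiv\ell\pmod p$. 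The sentence "lets one extend this congruence from prime-power dilations to $\ind_{rF}*f\equiv\ell\pmod p$" is exactly the missing step, and it is not a technicality: no version of the Frobenius argument modulo a small prime $p$ says anything about dilation by an integer coprime to $p$.

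The fix is to invert the roles of the primes, which is what the paper does. The point of $q_1$ in the hypothesis is not to supply a large modulus $\prod_{p\le M}p$ for a simultaneous-congruence argument; it is to guarantee that every prime factor $p'$ of $r$ satisfies $p'>M$ (and $p'\nmid q_2$, $p'\nmid |F|$). For such a $p'$, run Frobenius and Fermat \emph{modulo $p'$ itself} to get $\ind_{p'F}*f\equiv\ell\pmod{p'}$; since both $\ind_{p'F}*f$ and $\ell$ lie in the interval $[|F|\min f,\,|F|\max f]$ of length $M<p'$, this single congruence already upgrades to the exact equality $\ind_{p'F}*f=\ell$. One then iterates this \emph{equality} over the prime factorization of $r$, replacing $F$ by $p'F$ at each stage (note $|p'F|=|F|$ and the orders of torsion elements in $p'F-p'F$ still divide $q_2$, so the hypotheses persist). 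A side remark: in your step (iii) you also restrict to $p\nmid q_2$ and $p\nmid |F|$, but primes $p\le M$ can certainly divide $q_2$ or $|F|$, so even the congruences you do establish would not cover all primes dividing $q_1$; in the corrected architecture this issue disappears because the relevant primes are the factors of $r$, which are coprime to $q_1q_2$ and exceed $|F|$ by construction.
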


\begin{proof}
    We use the notation $f^{*p} = \underbrace{f*\ldots*f}_{\times p}$.  
    For any prime $p$ we have 
    \[\ind_F^{*p} = \left( \sum_{v\in F} \delta_v \right)^{*p} = \sum_{v\in F} \delta_v^{*p} \mod p,\]
where the last equality holds by the Frobenius identity $(f+g)^{*p} = f^{*p} + g^{*p} \mod p$. For integers $p$ that are co-prime to $q_2$ we have that $p(v_1-v_2) \ne 0$ for any $v_1\ne v_2 \in F$, so the function $v \mapsto pv$ is injective on $F$. Thus:
\begin{equation}\label{eq:Dilation_extends}
    \sum_{v\in F} \delta_v^{*p} = \sum_{v\in F} \delta_{pv} = \ind_{pF}.
\end{equation}

Now convolving both sides of $\ind_F * f =\ell$ by $\ind_F^{*(p-1)}$ yields $\ind_F^{*p} * f = \ell |F|^{p-1}$. Combining the above, for primes $p$ that are co-prime to $q_2$ we obtain $\ind_{pF} * f = \ell |F|^{p-1} \mod p$. If additionally $p$ is co-prime to $|F|$ by Fermat little theorem $|F|^{p-1} = 1 \mod p$, thus 
\[ \ind_{pF} * f = \ell \mod p.\]
 Note that both $\ind_{F} * f$ and $ \ind_{pF} * f$ take values in $[|F|\min f, |F|\max f]$. Recall that $\ell = \ind_F * f$, so $\ell  \in [|F|\min f, |F|\max f]$. Thus,  for $p$ that is also greater than the size of that interval, the above equality holds without the $\mod p$, namely $\ind_{pF} * f = \ell$. Finally, for $r = 1 \mod q$, $r$ is a product of primes that satisfy the conditions above, and the result follows by iterating the equation $\ind_{pF} * f = \ell$. 
\end{proof}

\begin{proof}[Proof of \Cref{thm:structure_theorem}]  
For $1 \le i \le k$, $(v_1,\ldots,v_i)  \in F_1^* \times \ldots \times F_i^*$ and $N \in \N$ denote:
\begin{equation}\label{eq:phi_i^N}
\phi^{(N)}_{v_1,\ldots,v_i} := \dfrac{1}{N^i} \sum_{n_1,\ldots,n_i = 1}^{N} \delta_{(1+n_1 q)v_1+\ldots+(1+n_i q)v_i} * f.
\end{equation}
Let $q$ be the product of all primes less than or equal to $(\max f - \min f)S$. 
By applying \Cref{lem:dilation} for $F_j$ with $\Gamma = \Z^d$ and $\ell=1$ we get $\ind_{rF_j} * f = 1$ for every $r \in q\N+1$. Since $0 \in F_j$ we obtain 
\[
f = 1 - \sum_{v \in F^*_j} \delta_{rv}* f \mbox{ for every } 1 \le j \le k.
\] 
For every $N\in\N$, setting $r = 1+n q$ for $n \in \{1,\ldots,N\}$ and taking average we conclude that for every $1 \le j \le k$ we have 
\begin{equation}\label{eq:avg1}
    f = 1 -  \sum_{v \in F^*_j}\dfrac{1}{N} \sum_{n = 1}^{N} \delta_{(1+n q)v} * f.
\end{equation} 
Since $\phi_{v_1}^{(N)} = \frac{1}{N}\sum_{n=1}^N \delta_{(1+nq)v_1}* f$ this gives (with $j=1$):
\begin{equation}\label{eq:basis_of_induction}
    f = 1 - \sum_{v_1 \in F^*_1} \phi^{(N)}_{v_1}.
\end{equation}

For $1\le i<k$, choose any $(v_1,\ldots,v_i) \in F_1^*\times\ldots\times F_i^*$ and $1\le n_1,\ldots,n_i\le N$.
Setting $j = i+1$ in \eqref{eq:avg1} and convolving both sides of the  equation  by $\delta_{(1 + n_1 q) v_1 + \ldots + (1 + n_i q) v_i}$ we obtain
\begin{equation*}
    \delta_{(1 + n_1 q) v_1 + \ldots + (1 + n_i q) v_i} * f = 1 -  \sum_{v_{i+1} \in F^*_{i+1}} \dfrac{1}{N} \sum_{n_{i+1} = 1}^{N} \delta_{(1+n_1 q)v_1 + \ldots + (1 + n_i q) v_i + (1 + n_{i+1} q) v_{i+1}} * f.
\end{equation*} 
By averaging over $1 \le n_1,\ldots,n_i \le N$ and applying the definition in \eqref{eq:phi_i^N} we obtain that
\begin{equation}\label{eq:phi_rec}
    \phi^{(N)}_{v_1,\ldots,v_i} = 1 -  \sum_{v_{i+1} \in F^*_{i+1}} \dfrac{1}{N^{i+1}} \sum_{n_1,\ldots,n_{i+1} = 1}^{N} \delta_{(1+n_1 q)v_1 + \ldots + (1+n_{i+1} q)v_{i+1}} * f = 
    1 - \sum_{v_{i+1}\in F^*_{i+1}}\phi^{(N)}_{v_1,\ldots,v_{i+1}}.
\end{equation} 

Since $|F^*_i| = S-1$ for $ 1\le i \le k$, using \eqref{eq:basis_of_induction}, \eqref{eq:phi_rec} and an inductive argument we obtain that for every $N \in \N$ and $1\le i \le k$ we have
\begin{equation}\label{eq:property_b}
    f = \sum_{j=1}^i (-(S-1))^{j-1} + (-1)^i\sum_{(v_1,\ldots,v_i) \in F_1^*\times \ldots \times F_i^*} \phi^{(N)}_{v_1,\ldots,v_i}
\end{equation} 

Notice that the functions $\delta_{(1+n_1 q)v_1 +\ldots+ (1+n_iq)v_i} * f$ are bounded between $\min f$ and $\max f$, thus by \eqref{eq:phi_i^N}, the functions  $\phi^{(N)}_{v_1,\ldots,v_i}$ are bounded between $\min f$ and $\max f$ for every $(v_1,\ldots,v_i) \in F_1^* \times \ldots \times F_i^*$. In particular,  for every $ (v_1,\ldots,v_i) \in F^*_1\times\ldots\times F^*_i$ the sequence of functions $(\phi^{(N)}_{v_1,\ldots,v_i})_{N \in \N}$ is uniformly bounded, hence by Arzelà–Ascoli theorem (or by a Cantor diagonalization argument), it converges along a subsequence. We denote the limit by $\phi_{v_1,\ldots,v_i}$. Then for every $(v_1,\ldots,v_i) \in F^*_1\times\ldots\times F^*_i$ we have 
\[\min f \le \phi_{v_1,\ldots,v_i} \le \max f,\] 
and in view of \eqref{eq:phi_rec} and \eqref{eq:property_b} we have achieved \ref{thm_sec_a:structure_theorem} and \ref{thm_sec_b:structure_theorem}.

To see \ref{thm_sec_c:structure_theorem}, using \eqref{eq:phi_i^N}, a standard telescoping argument shows that for every $w \in \Z^d$, $v = (v_1,\ldots,v_i) \in F^*_1\times \ldots \times F^*_i$ and every $1\le j\le i$ 
\begin{equation*}\label{eq:telescope}
    \absolute{\phi^{(N)}_{v_1,\ldots,v_i}(w+ q v_j) - \phi^{(N)}_{v_1,\ldots,v_i}(w)} \le \frac{2N^{i-1}}{N^i} \left( \max(f)- \min(f) \right)= \frac2N \left( \max(f)- \min(f) \right) .
\end{equation*} 
Thus for every $(v_1,\ldots,v_i) \in F^*_1\times \ldots \times F^*_i$ the function $\phi_{v_1,\ldots,v_i}$ is $qv_j$-periodic for every $1\le j\le i$. It remains to show \ref{thm_sec_d:structure_theorem}. 
Clearly, since $\ind_{F_j} * f = 1$, for every $1\le i,j\le k$, $(v_1,\ldots,v_i) \in F_1^* \times \ldots \times F_i^*$ and $n_1,\ldots,n_i \in \N$ we have $\ind_{F_j}*(\delta_{(1+n_1q)v_1+\ldots(1+n_iq)v_i} * f) = 1$. Thus, by \eqref{eq:phi_i^N}, $\ind_{F_j} * \phi_{v_1,\ldots,v_i}^{(N)} = 1$ for every $N\in\N$ and therefore $\ind_{F_j} * \phi_{v_1,\ldots,v_i} = 1$ for every $1\le i, j\le k$. 
In particular, by  \Cref{prop:mean}, $\phi_{v_1,\ldots,v_i}$ has mean $1/S$. 
\end{proof}

\begin{remark}\label{remark:stracture_theorem_extra}
Under the assumption that $f$ is $\{0,1\}$-valued, it directly follows from \cref{thm:structure_theorem}, part \ref{thm_sec_a:structure_theorem}, that for every $1\le i <k$ and every $(v_1,\ldots,v_i)\in F_1^*\times\ldots\times F_i^*$, the sum $\sum_{v_{i+1} \in F^*_{i+1}} \phi_{v_1,\ldots,v_i,v_{i+1}}$ is a $[0,1]$-valued function. 
\Cref{thm:structure_theorem}, where $k=1$ and $f$ is $\{0,1\}$-valued, coincides with \cite[Theorem 1.7]{Greenfeld_Tao1_2021}.
We will not make use of the  property that $\ind_{F_j} * \phi_{v_1,\ldots,v_i}=1$ in this paper. We mention it only for completeness and possibly for future reference. The fact that the functions $\phi_{v_1}$ each have mean $1/S$ played an implicit role in \cite{BPeriodicity2020}.
\end{remark}

Using the assumption that the tuple of tiles is independent \Cref{thm:periodic_decomposition} is an immediate corollary of \Cref{thm:structure_theorem}, with $f$ being a $\{0,1\}$-valued function. The proof of  \Cref{thm:d_independent_tiles} is straightforward.
\begin{proof}[Proof of \Cref{thm:d_independent_tiles}]
Suppose that $(F_1,\ldots,F_d)$ is an independent tuple of tiles in $\Z^d$ and that $f:\Z^d\to\Z$ is a bounded function satisfying $\ind_{F_i} * f = 1$ for all $1\le i\le d$. 
By \Cref{prop:mean}, we have $\absolute{F_1}=\ldots=\absolute{F_d}:=S$. Let  $q$ be the product of all primes less than or equal to $(\max f - \min f)S$ and let
\[ 
L = \bigcap_{(v_1,\ldots,v_d)  \in F_1^* \times \ldots\times F_d^*} q \Z v_1 +  \ldots+  q \Z v_d.
\] 
Apply \Cref{thm:structure_theorem} with $k=d$. It follows that $f$ is a sum of functions whose stabilizers are rank $d$-subgroups,
more precisely, 
\[ f = (-1)^d \sum_{(v_1,\ldots,v_d) \in F_1^* \times \ldots \times F_d^*}\phi_{v_1,\ldots,v_d} + \sum_{j=1}^{d}(-(S-1))^{j-1},\]
and for each $(v_1,\dots,v_d) \in  F_1^* \times \ldots \times F_d^*$ we have that
$q\Z v_1+  \ldots + q \Z v_d \le \stab(\phi_{v_1,\ldots,v_d})$.
By the above, $\stab(f)$ contains the intersection of $\stab(\phi_{v_1,\ldots,v_d})$ over  $(v_1,\dots,v_d) \in  F_1^* \times \ldots \times F_d^*$, that in turn contains $L$.

By the assumption that the tuple  $(F_1,\ldots,F_{d})$ is independent,  $q \Z v_1 +  \ldots+  q \Z v_d$ is a finite index subgroup of $\Z^d$ for every $(v_1,\dots,v_d) \in  F_1^* \times \ldots \times F_d^*$. Since $L$ is an intersection of finitely many finite index subgroups, $L$ is also a finite index subgroup. In particular, this proves that $f$ is $d$-periodic. In particular, any joint co-tile $A \subseteq \Z^d$ for  $(F_1,\ldots,F_{d})$ is $L$-periodic. For any finite-index subgroup $L$ of $\Z^d$ there are finitely many $L$-periodic subsets, so the set of joint co-tiles for $(F_1,\ldots,F_{d})$ is finite.
\end{proof}

\begin{remark}\label{rem:periodic_is_finite}
    It is clear that in any system of tiling equations that admits a finite number of solutions, the stabilizer of solutions is a finite index subgroup. It turns out that the converse is also true: If every solution of a certain system of tiling equations has a finite index stabilizer, it follows that the system admits only finitely many solutions \cite{MR2873724}. See also \cite{MR3950643} for further generalizations of this fact.
\end{remark}

\section{Joint co-tilings in finitely generated abelian groups}\label{sec:countable_abelian}
It is natural to ask which of the results about tilings generalize from $\Z^d$ to more general groups.
An inspection of the proof of \Cref{thm:structure_theorem}  reveals that the statement still holds, and the same proof applies if we replace $\Z^d$ by an arbitrary countable abelian group $\Gamma$, and change the value of  $q$ in \Cref{thm:structure_theorem} \ref{thm_sec_c:structure_theorem} by multiplying it with the product of the orders of all torsion elements in $F-F$.

Recall that elements $g_1,\ldots,g_k$ in a countable abelian group $\Gamma$ are called \emph{independent} if the equation $\sum_{j=1}^k n_j g_j =0$, with $n_1,\ldots,n_k \in \Z$, implies that $n_1=\ldots = n_k=0$.
With this definition, \Cref{thm:periodic_decomposition} extends directly as follows:

\begin{thm}
    Let $\Gamma$ be a countable abelian group. For every $k \in \N$, the indicator function of any joint co-tile for $k$ independent tiles in $\Gamma$ is equal to a sum of $[0,1]$-valued functions whose stabilizer has rank at least $k$, plus an integer constant.
\end{thm}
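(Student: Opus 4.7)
The plan is to apply the countable-abelian-group analogue of \Cref{thm:structure_theorem} to $f = \ind_A$, where $A$ is the given joint co-tile. First I would verify that the proof of \Cref{thm:structure_theorem} transfers to a countable abelian group $\Gamma$ with no substantive change, provided the constant $q$ in part \ref{thm_sec_c:structure_theorem} is enlarged to include the orders of all torsion elements of $\bigcup_j(F_j - F_j)$. The only step in the original argument that uses the freeness of $\Z^d$ is the injectivity of $v \mapsto pv$ on each $F_j$ in the Frobenius-identity step of \Cref{lem:dilation}; enlarging $q$ in this way ensures this injectivity for the relevant primes $p$. Everything else---the averaging in \eqref{eq:phi_i^N}, the telescoping bound, and the Cantor subsequential limit---is formal and goes through verbatim. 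The conclusion is that, writing $S := |F_1| = \cdots = |F_k|$, there exist $[0,1]$-valued functions $\phi_{v_1,\ldots,v_k}$ indexed by $(v_1,\ldots,v_k) \in F_1^* \times \cdots \times F_k^*$ such that
\begin{equation*}
    f = (-1)^k \sum_{(v_1,\ldots,v_k) \in F_1^* \times \cdots \times F_k^*} \phi_{v_1,\ldots,v_k} + \sum_{j=1}^k (-(S-1))^{j-1},
\end{equation*}
and $q\Z v_1 + \cdots + q\Z v_k \le \stab(\phi_{v_1,\ldots,v_k})$ for every such tuple.

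Next I would check that the subgroup $q\Z v_1 + \cdots + q\Z v_k$ has rank at least $k$. By the definition of independence, the elements $v_1,\ldots,v_k$ admit only the trivial integer relation $\sum_j n_j v_j = 0$; the same then holds for $qv_1,\ldots,qv_k$, since $\sum_j n_j(q v_j) = \sum_j (q n_j) v_j = 0$ forces $q n_j = 0$ in $\Z$, hence $n_j = 0$. Thus $\{qv_1,\ldots,qv_k\}$ is an independent set lying inside $\stab(\phi_{v_1,\ldots,v_k})$, so the stabilizer has rank at least $k$ in the sense of the definition of rank adopted in the paper.

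Finally, I would convert the displayed decomposition into a sum of $[0,1]$-valued summands plus an integer constant. If $k$ is even this is immediate from the formula above. If $k$ is odd, I would apply the elementary identity $-\phi = (1 - \phi) - 1$, noting that $1 - \phi_{v_1,\ldots,v_k}$ is again $[0,1]$-valued and has the same stabilizer as $\phi_{v_1,\ldots,v_k}$; after replacing each summand in this way, the accumulated $-1$ terms merge with $\sum_{j=1}^k (-(S-1))^{j-1}$ into a single integer constant. I expect the main obstacle to be purely bookkeeping: verifying that enlarging $q$ by the torsion-order factor really does salvage the Frobenius step of the dilation lemma in every countable abelian $\Gamma$, after which the algebraic averaging engine of \Cref{thm:structure_theorem} takes over with no further conceptual input.
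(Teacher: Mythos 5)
Your proposal is correct and follows essentially the same route as the paper: the authors likewise observe that \Cref{thm:structure_theorem} (via the dilation lemma, \Cref{lem:dilation}, which is already stated for countable abelian groups) carries over once $q$ is multiplied by the orders of the torsion elements of the difference sets, and then apply it to $f=\ind_A$; your added observations that independence of the tiles forces $F_j^*$ to be torsion-free (so the decomposition takes the clean form rather than the $\ind_{\tF_j}$-convolved form of \Cref{thm:structure_countable_abelian}), that $qv_1,\ldots,qv_k$ remain independent, and that the sign for odd $k$ is absorbed via $-\phi=(1-\phi)-1$ are exactly the routine verifications the paper leaves implicit.
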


\begin{remark}\label{rem:fg_abelian_countable_abelian}
 The structure of a general countable abelian group can be quite complicated. In contrast, \emph{finitely generated} abelian groups have a simple structure theorem: Any finitely generated abelian group is isomorphic to $\Z^d \times G$, for some integer $d \ge 0$ and some finite abelian group $G$ (which is a product of cyclic groups of the form $(\Z/m\Z$). However, 
the following simple observation allows one to reduce statements about tilings of countable abelian groups by a finite set to the finitely generated case: Let $\Gamma$ be a countable abelian group and let $F \Subset \Gamma$ with $0\in F$. Let $\Gamma_0$ denote the group generated by the difference set $F-F$. The assumption $0 \in F$ implies that $F \Subset \Gamma_0$. Then for any co-tile $A$ of $F$ we have that $A \cap \Gamma_0$ is a co-tile of $F$ in $\Gamma_0$, and tilings of $\Gamma$ by $F$ decompose into tilings of cosets of $\Gamma_0$ in $\Gamma$. A corresponding statement is true also for a tuple of tiles $(F_1,\ldots,F_k)$ and a joint co-tile.
\end{remark}

\Cref{thm:d_independent_tiles} can be extended to finitely generated abelian groups as follows:

\begin{thm}\label{thm:rank_d_groups_indepednent_co_tile}
Let $\Gamma$ be a finitely generated abelian group of rank $d$.
Any joint co-tile for $d$ independent tiles in $\Gamma$ has a finite orbit.
\end{thm}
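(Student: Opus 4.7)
The plan is to follow the argument of \Cref{thm:d_independent_tiles} almost verbatim, but working in $\Gamma$ instead of $\Z^d$ and using the version of \Cref{thm:structure_theorem} for countable abelian groups announced at the start of \Cref{sec:countable_abelian}. The structure theorem for finitely generated abelian groups gives $\Gamma \cong \Z^d \times G$ for some finite abelian group $G$; this is needed only to justify the finiteness step at the end.

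First I would fix an independent tuple of tiles $(F_1,\ldots,F_d)$ in $\Gamma$ and a joint co-tile $A \subseteq \Gamma$. The countable amenable analogue of \Cref{prop:mean} gives $|F_1| = \cdots = |F_d| =: S$. Let $q$ be the product of all primes at most $S$, multiplied by the product of the orders of the torsion elements in $\bigcup_j (F_j - F_j)$, as prescribed by the remark opening \Cref{sec:countable_abelian}. The extension of \Cref{thm:structure_theorem} with $k = d$ then produces a decomposition
\[
\ind_A \;=\; (-1)^d \sum_{(v_1,\ldots,v_d) \in F_1^* \times \cdots \times F_d^*} \phi_{v_1,\ldots,v_d} \;+\; \sum_{j=1}^d (-(S-1))^{j-1},
\]
in which each $\phi_{v_1,\ldots,v_d}$ is $(q\Z v_1 + \cdots + q\Z v_d)$-periodic. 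Consequently $\stab(A) = \stab(\ind_A)$ contains
\[
L \;=\; \bigcap_{(v_1,\ldots,v_d) \in F_1^* \times \cdots \times F_d^*} \bigl(q\Z v_1 + \cdots + q\Z v_d\bigr).
\]

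The only genuinely new ingredient is the observation that $L$ has finite index in $\Gamma$, whence the translation orbit of $A$, being in bijection with $\Gamma/\stab(A)$, is finite. Since $(v_1,\ldots,v_d)$ is independent and $\rank(\Gamma) = d$, the subgroup $\Z v_1 + \cdots + \Z v_d$ has rank $d$. In any finitely generated abelian group of rank $d$, a rank-$d$ subgroup $H$ has finite index, because $\Gamma/H$ is a finitely generated abelian group of rank zero and is therefore finite. Dilating the generators by $q$ keeps the index finite, and a finite intersection of finite-index subgroups remains finite index, so $L$ is finite index and we are done.

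The main (minor) obstacle is a bookkeeping check that the proof of \Cref{thm:structure_theorem} transfers verbatim with the enlarged $q$; torsion is handled exactly by the statement of \Cref{lem:dilation}, which already contains the factor $q_2$ accounting for the orders of torsion elements in $F - F$. The conceptual point of passing from \Cref{thm:d_independent_tiles} to this statement is that in the presence of torsion, having a rank-$d$ stabilizer no longer literally means ``periodic in $d$ independent directions'' inside a $\Z^d$-summand, but it still forces the orbit to be finite, which is the correct analogue in the finitely generated setting.
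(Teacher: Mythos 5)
Your proof is correct and is essentially the argument the paper intends: it leaves \Cref{thm:rank_d_groups_indepednent_co_tile} as a direct extension of the proof of \Cref{thm:d_independent_tiles}, using the countable-abelian version of the structure theorem with the enlarged $q$ and the observation that a rank-$d$ subgroup of a rank-$d$ finitely generated abelian group has finite index. The only point worth making explicit is that independence of the tuple forces the torsion part of each $F_i$ to be $\{0\}$, so the refined form of the structure theorem (\Cref{thm:structure_countable_abelian}, with $F_i^* = F_i \setminus \tF_i$) coincides with the verbatim transfer you invoke.
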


\begin{remark}
Using \Cref{rem:fg_abelian_countable_abelian} one  can directly conclude the following property for joint co-tiles of an arbitrary countable group:
Let $\Gamma$ be a countable abelian group and let $(F_1,\ldots, F_d)$ be an independent tuple of tiles in $\Gamma$ so that the group $\Gamma_0$ generated by $\bigcup_{i=1}^d(F_i -F_i)$ has rank $d$. 
Then there are finitely many set $A_1,\ldots,A_m \subset \Gamma_0$ such that the any joint co-tile $A \subseteq \Gamma$ of  $(F_1,\ldots, F_d)$ satisfies $A \cap \Gamma_0 = A_i$ for some $1\le i \le m$.    
\end{remark}

A quick remark about the condition of independence for a tuple of tiles for finitely generated abelian groups with non-trivial torsion:
\begin{remark}
If $\Gamma$ is of the form $\Gamma= \Z^d \times G$ where $G$ is a finite abelian group and $(F_1,\ldots,F_k)$ is an independent tuple of tiles in $\Gamma$, then the only torsion element in each of the sets $F_i$ is $0$. 
For this reason, Newman's theorem (i.e. any tiling of $\mathbb{Z}$ by a finite set is periodic) does not hold in abelian groups $\Gamma$ that are finite extensions of $\Z$.
\end{remark}

\begin{figure}[H]
\centering
        \includegraphics[scale=0.6]{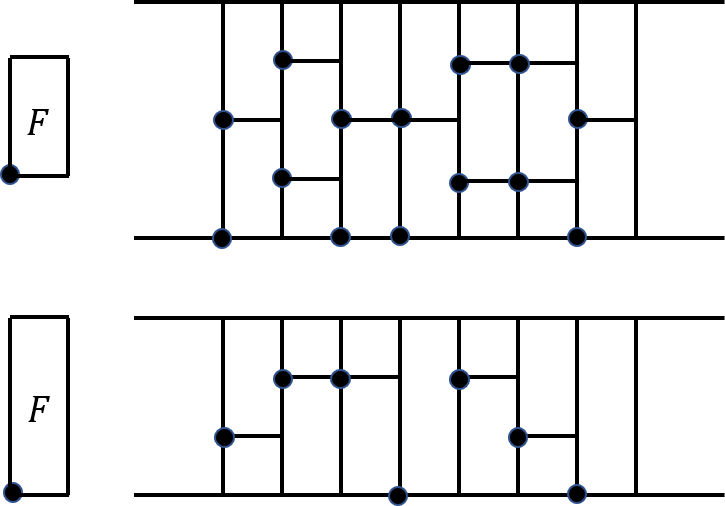}
        \caption{Non-periodic co-tiles of $\Z\times\Z/n\Z$ for $n=4$ and $n=3$ for the tile $F=\{1\}\times G$, where $G$ is a non-trivial subgroup of $\Z/n\Z$.}
\end{figure}\label{pic:entire_fiber}

Indeed, let $\Gamma = \Z \times G$, where $G$ is a finite abelian group. Consider the tile $F = \{1\} \times G \Subset \Gamma$. The co-tiles of $F$ are all the sets $A \subset \Gamma$ of the following form:
\[A= \{(n,g_n):~ n \in \Z \},\]
for some sequence $(g_n)_{n \in \Z}$ of elements in $G$.
In particular, 
 it is no longer true that any co-tile of $F$ must be periodic, unless $G$ is trivial.
Nonetheless, if $G$ is a finite cyclic group of prime order, then the only obstructions to extending Newman's theorem are of this form. See \cite[Lemma 5.1 and Remark 5.2]{Greenfeld_Tao1_2021} for related statements.

\begin{prop}\label{prop:newman_Z_mod_pZ}
    If $\Gamma = \Z \times (\Z/p\Z)$ for some prime number $p$ and $F \Subset \Gamma$ is a finite set, then every co-tile of $F$ is periodic, unless $F$ is of the form $F = \tilde F \times (\Z/p\Z)$ for some finite tile $\tilde F \Subset \Z$, in which case the co-tiles of $F$ are all of the form\begin{equation}\label{eq:A_graph_Z_mod_p}
A = \{ (n,g_n): n \in \tilde A\},~ g_n \in \Z / p\Z,    
\end{equation}
where $\tilde A$ is a co-tile of $\tilde F \Subset \Z$, which by Newman's theorem must be periodic.
\end{prop}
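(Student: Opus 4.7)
The plan is to translate so that $0\in F$ and then split into cases according to whether $F$ has the exceptional form $\tilde F\times(\Z/p\Z)$. Write $F_n:=\{g\in\Z/p\Z:(n,g)\in F\}$ and, for any co-tile $A$, $A_m:=\{g:(m,g)\in A\}$. In the exceptional case, column-wise counting in $F\oplus A=\Gamma$ over each $\{m\}\times(\Z/p\Z)$ gives $\sum_{n\in\tilde F}|A_{m-n}|=1$ for every $m\in\Z$; since the summands are non-negative integers, each $|A_m|\in\{0,1\}$, which forces $A=\{(m,g_m):m\in\tilde A\}$ for some $g_m\in\Z/p\Z$, where $\tilde A:=\{m:|A_m|=1\}$ satisfies $\tilde F\oplus\tilde A=\Z$. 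The $d=1$ case of \Cref{thm:d_independent_tiles} (Newman's theorem) then forces $\tilde A$ to be periodic.

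Assume henceforth that $F\neq\tilde F\times(\Z/p\Z)$, so some fiber satisfies $1\le|F_n|\le p-1$. Picking any $g\in F_n$ for such an $n$ and translating by $-(n,g)$ (which merely permutes co-tiles by a translation), we may assume $F_0$ itself is a proper non-empty subset of $\Z/p\Z$. If $|F_0|=1$ then $F_0=\{0\}$, so $F^*$ contains no torsion element, $F$ is an independent $1$-tuple of tiles, and \Cref{thm:rank_d_groups_indepednent_co_tile} directly yields that every co-tile of $F$ is periodic. Henceforth assume $2\le|F_0|\le p-1$.

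Apply the countable-abelian-group extension of \Cref{thm:structure_theorem} with $k=1$ to obtain $\ind_A=1-\sum_{v\in F^*}\phi_v$. Set $T_F:=\{0\}\times F_0$, the torsion part of $F$. Since $F-F$ contains an element of order $p$, the modulus $q$ in the structure theorem is divisible by $p$, hence $qv=0$ for every $v\in T_F$; unwinding the definition of $\phi_v$ then gives $\phi_v=\delta_v*\ind_A=\ind_{A+v}$ for $v\in T_F\setminus\{0\}$, while for $v\in F\setminus T_F$ (i.e., non-torsion $v\in F^*$), $\phi_v$ is $\Gamma$-periodic because its stabilizer contains the finite-index subgroup $\langle qv\rangle$. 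Moving the torsion contributions to the left yields
\[
\eta\;:=\;\ind_{T_F}*\ind_A\;=\;1-\sum_{v\in F\setminus T_F}\phi_v,
\]
so $\eta$ is $\Gamma$-periodic. A direct computation shows $\eta(m,h)=|\{g\in F_0:h-g\in A_m\}|$, i.e., $\eta(m,\cdot)=\ind_{F_0}*\ind_{A_m}$ as functions on $\Z/p\Z$, so there exists $N\ge1$ with $\ind_{F_0}*\ind_{A_{m+N}}=\ind_{F_0}*\ind_{A_m}$ on $\Z/p\Z$ for every $m\in\Z$.

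The main obstacle is to invert the convolution by $\ind_{F_0}$ on $\Z/p\Z$. Its Fourier transform at $k$ equals $P(\omega^k)$ with $P(x):=\sum_{g\in F_0}x^g\in\Z[x]$ and $\omega:=e^{2\pi i/p}$; a zero at any $k\neq 0$ would force, by Galois conjugation of the primitive $p$-th roots of unity, the cyclotomic polynomial $\Phi_p(x)=1+x+\cdots+x^{p-1}$ to divide $P$. Since $\deg P\le p-1=\deg\Phi_p$ and $P$ has $0$-$1$ coefficients, the only possibilities are $P=0$ or $P=\Phi_p$, contradicting $\emptyset\neq F_0\neq\Z/p\Z$. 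Therefore $\ind_{F_0}$ is convolution-invertible on $\Z/p\Z$, so $A_{m+N}=A_m$ for every $m\in\Z$, making $A$ an $(N,0)$-periodic subset of $\Gamma$.
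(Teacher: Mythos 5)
Your proof is correct and follows essentially the same route as the paper's: the structure theorem (in its countable-abelian form) shows $\ind_{\tF}*\ind_A$ is periodic, and one then inverts convolution by $\ind_{F_0}$ on $\Z/p\Z$ using that $F_0$ is a nonempty proper subset of a group of prime order. The only differences are cosmetic — you handle the exceptional case by fiber counting rather than the factorization $\ind_F=\ind_{\tilde F\times\{0\}}*\ind_{\{0\}\times(\Z/p\Z)}$, and you prove invertibility via the Fourier transform and irreducibility of $\Phi_p$ rather than a B\'ezout identity in $\Q[x]/\langle x^p-1\rangle$.
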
 


The proof of the proposition relies on the following generalization of \Cref{thm:structure_theorem}. 
\begin{thm}
\label{thm:structure_countable_abelian} Let $\Gamma$ be a countable abelian group, $F_1,\ldots,F_k \Subset \Gamma$ such that $|F_i| = S$, and $0 \in F_i$ for all $1 \le i \le k$, and let $f:\Gamma\to\Z$ be a bounded function that satisfies $\ind_{F_i} * f = 1$ for all $1 \le i \le k$. 
For every $1 \le i \le k$, let $\tF_i$ denote the intersection of $F_i$ with the torsion subgroup of  $\Gamma$, and let $F_i^* = F_i \setminus \tF_i$.
Then for every $1 \le i \le k$ and every $(v_1,\ldots,v_i) \in F_1^*\times \ldots \times F_i^*$ there exists a function $\phi_{v_1,\ldots,v_i}:\Gamma \to [\min f,\max f]$ with the following properties:
\begin{enumerate}[label=(\alph*)]
\item\label{thm_sec_a:structure_theorem_Gamma}  
For $i < k$ we have
\begin{equation*}
        \ind_{\tF_{i+1}} * \phi_{v_1,\ldots,v_i} = 1 - \sum_{v_{i+1} \in F_{i+1}^*} \phi_{v_1,\ldots,v_i,v_{i+1}}.
\end{equation*}
\item\label{thm_sec_b:structure_theorem_Gamma}  
For every $1\le i \le k$ there is an integer constant $C_i$ such that 
\begin{equation*}
        \ind_{\tF_1} * \ldots * \ind_{\tF_i} * f = (-1)^i\sum_{(v_1,\ldots,v_i) \in F_1^*\times \ldots \times F_i^*} \phi_{v_1,\ldots,v_i} + C_i.
\end{equation*}
\item\label{thm_sec_c:structure_theorem_Gamma} 
Let $q_1$ be the product of all primes less than or equal to $(\max f - \min f)S$, let $q_2$ be the product of all the orders of the torsion elements in the sets $F_i-F_i$, for $1\le i\le k$, and set $q = q_1q_2$. Then
\[
\left( \Z qv_1 + \ldots+ \Z qv_i \right)  \le  \stab(\phi_{v_1,\ldots,v_i}),
\]
\item\label{thm_sec_d:structure_theorem_Gamma} 
$1_{F_j} *\phi_{v_1,\ldots,v_i}=1$ for all $1 \le j \le k$. In particular, $\phi_{v_1,\ldots,v_i}$ has mean $1/S$. 
\end{enumerate}
\end{thm}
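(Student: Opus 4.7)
The plan is to mimic the proof of Theorem \ref{thm:structure_theorem} with two minor adjustments that accommodate torsion. First, I would invoke the dilation lemma (Lemma \ref{lem:dilation}), which is already stated for countable abelian groups and yields $\ind_{rF_j}*f = 1$ for every $r \equiv 1 \pmod{q}$, where $q = q_1 q_2$ now includes the product $q_2$ of orders of torsion elements in $\bigcup_j (F_j - F_j)$. The key algebraic observation is that for such $r$, the map $v \mapsto rv$ fixes every torsion element of $F_j$ pointwise (since $(r-1)v = 0$ whenever the order of $v$ divides $q_2 \mid q$) while remaining injective on the non-torsion part $F_j^*$, so $rF_j = \tF_j \sqcup rF_j^*$ as a disjoint union and hence
$$\ind_{rF_j}*f \;=\; \ind_{\tF_j}*f + \sum_{v\in F_j^*}\delta_{rv}*f \;=\; 1.$$
Rearranging gives the fundamental identity $\ind_{\tF_j}*f = 1 - \sum_{v\in F_j^*}\delta_{rv}*f$, which replaces the identity $f = 1 - \sum_{v \in F_j^*} \delta_{rv} *f$ used in the proof of Theorem \ref{thm:structure_theorem} (it reduces to it when the only torsion element is $0$).

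For each $N \in \N$ and $(v_1,\ldots,v_i) \in F_1^*\times\ldots\times F_i^*$, I would then define, exactly as in the original proof,
$$\phi^{(N)}_{v_1,\ldots,v_i} \;:=\; \frac{1}{N^i}\sum_{n_1,\ldots,n_i=1}^N \delta_{(1+n_1 q)v_1 + \ldots + (1+n_i q)v_i}*f.$$
Convolving the fundamental identity with $j = i+1$ and $r = 1+n_{i+1}q$ by $\delta_{(1+n_1q)v_1+\ldots+(1+n_iq)v_i}$ and averaging over $n_1,\ldots,n_{i+1}\in\{1,\ldots,N\}$ yields the pre-limit version of property \ref{thm_sec_a:structure_theorem_Gamma}:
$$\ind_{\tF_{i+1}}*\phi^{(N)}_{v_1,\ldots,v_i} = 1 - \sum_{v_{i+1}\in F_{i+1}^*} \phi^{(N)}_{v_1,\ldots,v_i,v_{i+1}}.$$
Since each $\phi^{(N)}_{v_1,\ldots,v_i}$ takes values in $[\min f, \max f]$, a Cantor diagonalization extracts a subsequence along which all these functions converge pointwise to limits $\phi_{v_1,\ldots,v_i}$; passing to the limit establishes \ref{thm_sec_a:structure_theorem_Gamma}. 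The telescoping estimate from the original proof, $|\phi^{(N)}_{v_1,\ldots,v_i}(w+qv_j)-\phi^{(N)}_{v_1,\ldots,v_i}(w)| \le \tfrac{2}{N}(\max f - \min f)$ for $1\le j\le i$, survives verbatim and gives \ref{thm_sec_c:structure_theorem_Gamma} in the limit. Property \ref{thm_sec_d:structure_theorem_Gamma} follows because $\ind_{F_j}$ has finite support and $\ind_{F_j}*\phi^{(N)}_{v_1,\ldots,v_i}=1$ holds at every finite $N$; the mean computation then follows from the amenable-group version of \Cref{prop:mean}.

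Property \ref{thm_sec_b:structure_theorem_Gamma} is obtained by induction on $i$. For $i=1$, averaging the fundamental identity directly yields $\ind_{\tF_1}*f = 1 - \sum_{v_1\in F_1^*}\phi_{v_1}$, so $C_1 = 1$. For the inductive step, I would apply $\ind_{\tF_{i+1}}*$ to the identity at level $i$ and substitute \ref{thm_sec_a:structure_theorem_Gamma} summed over $(v_1,\ldots,v_i)$, using that $\ind_{\tF_{i+1}}$ convolved with an integer constant $C_i$ is the integer constant $C_i|\tF_{i+1}|$. This produces
$$\ind_{\tF_1}*\ldots*\ind_{\tF_{i+1}}*f = (-1)^{i+1}\!\!\!\sum_{(v_1,\ldots,v_{i+1})}\!\!\phi_{v_1,\ldots,v_{i+1}} + C_{i+1},$$
with the recursion $C_{i+1} = (-1)^i|F_1^*|\cdots|F_i^*| + C_i|\tF_{i+1}|$, manifestly an integer. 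I do not expect any serious obstacle here: every step parallels the proof of Theorem \ref{thm:structure_theorem}, and the only bookkeeping that requires real care is verifying that the torsion indicators $\ind_{\tF_j}$ commute correctly through the averages and that the resulting correction is an integer constant rather than a non-trivial function on $\Gamma$.
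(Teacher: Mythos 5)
Your proposal is correct and follows essentially the same route as the paper's proof: the key point in both is that for $r\equiv 1\pmod q$ the dilation $v\mapsto rv$ fixes $\tF_j$ pointwise (using $0\in F_j$ so that each torsion element of $F_j$ lies in $F_j-F_j$ and its order divides $q_2$), giving $\ind_{\tF_j}*f=1-\sum_{v\in F_j^*}\delta_{rv}*f$, after which the averaging, diagonal limit, telescoping bound, and the induction producing the integer constants $C_i$ proceed exactly as in \Cref{thm:structure_theorem}. The only cosmetic difference is that you run the induction for part \ref{thm_sec_b:structure_theorem_Gamma} on the limit functions $\phi$ rather than on the $\phi^{(N)}$, which is immaterial since the sums are finite.
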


 The proof of \Cref{thm:structure_countable_abelian} below is a minor adaptation of the proof of \Cref{thm:structure_theorem}.
Note that in the case where $\Gamma$ is a torsion free abelian group, $\tF_i = \{0\}$. In particular, when $\Gamma=\Z^d$, \Cref{thm:structure_countable_abelian} coincides with \Cref{thm:structure_theorem}.

\begin{proof}
By applying \Cref{lem:dilation} for $F_i$ with $\ell=1$ and $q$ as in \ref{thm_sec_c:structure_theorem_Gamma}  we get $\ind_{rF_i} * f = 1$ for every $r \in q\N+1$. Because $r = 1 \mod q$, we have $r\tF_i = \tF_i$. Since $F_i = \tF_i \uplus F_i^*$ we have
\[
\ind_{\tF_i} * f = 1 - \sum_{v \in F^*_i} \delta_{rv}* f \mbox{ for every } 1 \le i \le k.
\] 
For every $N\in\N$, setting $r = 1+n q$ for $n \in \{1,\ldots,N\}$ and taking average we conclude that for every $1 \le j \le k$ we have 
\begin{equation}\label{eq:avg1_Gamma}
    \ind_{\tF_j} * f = 1 -  \sum_{v_j \in F^*_j}\frac{1}{N} \sum_{n_j = 1}^{N} \delta_{(1+n_j q)v_j} * f.
\end{equation} 
Applying \eqref{eq:avg1_Gamma} with $j=i+1$, convolving both sides by $\delta_{(1+n_1q)v_1+\ldots+(1+n_iq)v_i}$ and taking average over $\frac{1}{N^i}\sum_{n_1,\ldots,n_i = 1}^N$ yields 
\begin{align*}\label{eq:basis_of_induction_Gamma}
\begin{split}
    \ind_{\tF_{i+1}} * \left[ 
    \frac{1}{N^i} \sum_{n_1,\ldots,n_i = 1}^N \delta_{(1+n_1q)v_1+\ldots+(1+n_iq)v_i} * f \right] &= 
    \\
    1 - \sum_{v_{i+1} \in F^*_{i+1}} \frac{1}{N^{i+1}}\sum_{n_1,\ldots,n_i,n_{i+1} = 1}^N & \delta_{(1+n_1q)v_1+ \ldots+(1+n_iq)v_i+(1+n_{i+1}q)v_{i+1}}*f.
    \end{split}
\end{align*}
Defining $\phi_{v_1,\ldots,v_i}^{(N)} = \frac{1}{N^i}\sum_{n_1,\ldots,n_i=1}^N \delta_{(1+n_1q)v_1+\ldots+(1+n_iq)v_i} * f$, as in \eqref{eq:phi_i^N}, we obtain 
\begin{equation}\label{eq:phi_rec_Gamma}
     \ind_{\tF_{i+1}} * \phi^{(N)}_{v_1,\ldots,v_i} = 1 -  \sum_{v_{i+1} \in F^*_{i+1}} \phi^{(N)}_{v_1,\ldots,v_i,v_{i+1}}. 
\end{equation}
Note that \eqref{eq:avg1_Gamma} with $j=1$ becomes $    \ind_{\tF_1} * f = 1 -  \sum_{v_1 \in F^*_1}\phi^{(N)}_{v_1}$. Convolving both sides by $\ind_{\tF_2}$ and using \eqref{eq:phi_rec_Gamma} with $i=1$ gives 
\[
\ind_{\tF_1} * \ind_{\tF_2} * f = 
|\tF_2| -  \sum_{v_1 \in F^*_1} \ind_{\tF_2} * \phi^{(N)}_{v_1} = 
|\tF_2| -  \sum_{v_1 \in F^*_1}\left( 
1 - \sum_{v_2\in F^*_2} \phi^{(N)}_{v_1,v_2}
\right).
\]
By an inductive argument we obtain that for every $N \in \N$ and $1\le i \le k$ there is a constant $C_i\in\Z$, that does not depend on $N$, such that 
\begin{equation}\label{eq:property_b_Gamma}
    \ind_{\tF_1} * \ldots \ind_{\tF_i} * f = C_i + (-1)^i\sum_{(v_1,\ldots,v_i) \in F_1^*\times \ldots \times F_i^*} \phi^{(N)}_{v_1,\ldots,v_i}.
\end{equation} 
Items \ref{thm_sec_a:structure_theorem_Gamma} and \ref{thm_sec_b:structure_theorem_Gamma} follow from \eqref{eq:phi_rec_Gamma} and \eqref{eq:property_b_Gamma} respectively. The rest of the proof is completely identical to the proof of \Cref{thm:structure_theorem} and therefore omitted. 
\end{proof}

\begin{lemma}\label{lem:Z_mod_p_invertible}
Let $p$ be a prime number and let $\emptyset \neq F_0\subsetneqq \Z / p\Z$. Then $\ind_{F_0}$ is an invertible element of the ring $\Q^{\Z / p\Z}$, where multiplication in the ring is convolution.  In other words, there exists $g \in  \Q^{\Z / p\Z}$ such that $g * \ind_{F_0}= \delta_0$.
\end{lemma}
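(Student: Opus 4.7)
The plan is to identify the convolution ring $\mathbb{Q}^{\Z/p\Z}$ with the group algebra $\mathbb{Q}[\Z/p\Z]$, and then with the quotient ring $R := \mathbb{Q}[x]/(x^p-1)$ via the isomorphism sending $\delta_j$ to the class of $x^j$. Under this identification, $\ind_{F_0}$ corresponds to the polynomial $P(x) := \sum_{j \in F_0} x^j \mod (x^p - 1)$, and showing that $\ind_{F_0}$ is invertible in $\mathbb{Q}^{\Z/p\Z}$ is the same as showing $P$ is a unit in $R$.

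Next, I would use the factorization $x^p - 1 = (x-1)\Phi_p(x)$, where $\Phi_p(x) = 1 + x + \cdots + x^{p-1}$ is the $p$-th cyclotomic polynomial. Since $p$ is prime, $\Phi_p$ is irreducible over $\mathbb{Q}$, and the two factors are coprime. By the Chinese remainder theorem,
\[
R \;\cong\; \mathbb{Q}[x]/(x-1) \;\times\; \mathbb{Q}[x]/\Phi_p(x) \;\cong\; \mathbb{Q} \times \mathbb{Q}(\zeta_p),
\]
where $\zeta_p$ is a primitive $p$-th root of unity. An element of $R$ is a unit if and only if both of its components under this isomorphism are nonzero. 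For $P$, the first component is $P(1) = |F_0|$, which is nonzero because $F_0$ is nonempty.

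The main (and essentially only) point is to rule out $P(\zeta_p) = 0$. If this vanished, then $\Phi_p$, being the minimal polynomial of $\zeta_p$ over $\mathbb{Q}$, would divide $P$ in $\mathbb{Q}[x]$. But $\deg P \le p - 1 = \deg \Phi_p$, so $P = c \Phi_p$ for some $c \in \mathbb{Q}$. Comparing coefficients (all coefficients of $P$ lie in $\{0,1\}$, all coefficients of $\Phi_p$ equal $1$) forces $c \in \{0,1\}$, giving either $F_0 = \emptyset$ or $F_0 = \Z/p\Z$; both are excluded by hypothesis. Hence $P(\zeta_p) \ne 0$, $P$ is a unit in $R$, and pulling back the inverse yields $g \in \mathbb{Q}^{\Z/p\Z}$ with $g * \ind_{F_0} = \delta_0$. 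No step here looks like a real obstacle; the only mild subtlety is verifying that the CRT identification preserves the convolution product, which is standard.
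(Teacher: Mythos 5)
Your proposal is correct and follows essentially the same route as the paper: identify $\Q^{\Z/p\Z}$ with $\Q[x]/\langle x^p-1\rangle$, check that $P(x)=\sum_{j\in F_0}x^j$ is coprime to both $x-1$ (via $P(1)=|F_0|\ne 0$) and $\Phi_p$ (via the $\{0,1\}$-coefficient argument, a detail the paper states but does not spell out), and invert. The only cosmetic difference is that you package the conclusion via the Chinese remainder theorem where the paper writes down the B\'ezout identity $1=Q_1P+Q_2(x^p-1)$ directly.
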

\begin{proof}
Consider the ring $\Q[x]/\langle x^p-1 \rangle$ (with operations of addition and multiplication of polynomials). It is easy to check that this ring is isomorphic as a ring to $\Q^{\Z / p\Z}$, with the operations of pointwise addition and convolution.
The isomorphism is given by identifying an element 
\[ \sum_{i=0}^{p-1} a_i x^i + \langle x^p-1 \rangle  \in \Q[x]/\langle x^p-1 \rangle\]
with the function $f  \in \Q^{\Z / p\Z}$ given by $f(i + p \Z) = a_i$.

Let $F_0\subset \Z / p\Z$ be a non-empty proper subset of $\Z/p\Z$.
Then $\ind_{F_0} \in  \Q^{\Z / p\Z}$ is naturally identified with the  coset of the polynomial  $P(x) = \sum_{ (i+ p\Z) \in F_0} x^i$ in  $\Q[x]/\langle x^p-1 \rangle$. Then the assumption that $F_0$ is a non-empty proper subset of $\Z / p\Z$ implies that the polynomial $P$ is co-prime to the cyclotomic polynomial of order $p$, $\Phi_p = \sum_{i=0}^{p-1} x^i$. Since $P(1)= |F_0| \ne 0$ it follows that $P$ is co-prime to $x-1$. Because $x^p-1= \Phi_p(x)(x-1)$, it follows that $P$ is co-prime to $x^p-1$. Hence there exists polynomials $Q_1,Q_2 \in \Q[x]$ such that 
\[ 1 = Q_1(x) P(x) + Q_2(x)(x^p-1).\]
This means that in the ring $\Q[x]/\langle x^p-1 \rangle$, the coset of $Q_1(x) P(x)$ is the same as the coset of the polynomial $1$. Since the coset of the polynomial $1$ in $\Q[x]/\langle x^p-1 \rangle$ corresponds  to $\delta_0 \in \Q^{\Z / p\Z}$,
this implies that $g * \ind_{F_0} = \delta_0$, where $g \in \Q^{\Z / p\Z}$ is the element corresponding to the coset of $Q_1$.
\end{proof}


\begin{proof}[Proof of \Cref{prop:newman_Z_mod_pZ}]
Let $p$ be a prime number and  $F \Subset \Z \times (\Z / p \Z)$ be a finite set.
Suppose $A \subset \Z \times (\Z / p\Z)$ satisfies $\ind_F * \ind_A = 1$.
Applying \Cref{thm:structure_countable_abelian} with $\Gamma= \Z \times (\Z / p \Z)$ $k=1$, $F_1=F$ and $f= \ind_A$, we conclude that $\ind_{\tF} *\ind_A$ is a sum  functions having infinite stabilizer, hence  $\ind_{\tF} *\ind_A$ is periodic. 

First, assume that there is a set $\tilde F \Subset \Z$ such that $F = \tilde F \times \Z / p\Z$. So $\ind_F =   \ind_{\tilde F \times \{0\}} * \ind_{ \{0\} \times (\Z / p\Z)}$. Thus  $ \ind_{\tilde F \times \{0\}} * \ind_{ \{0\} \times (\Z / p\Z)}* \ind_{A} = 1$. 
This implies that  $\ind_{\{0\} \times (\Z \times p\Z)} * 1_A \le 1$, so for every $n \in  \Z$ there exists at most one element $g_n \in \Z / p\Z$ such that $(n,g_n) \in A$. Hence, in this case, $A$ is of the form \eqref{eq:A_graph_Z_mod_p} for some set $\tilde A \subset \Z$. It follows that $\ind_{\tilde F} * \ind_{\tilde A} =1$, where the convolution here is with respect to the group $\Z$. 

Now suppose that $F$ is not of the above form. This means that there exists $n \in \Z$ such that $F \cap (\{n\} \times \Z / p \Z)$ is a non-empty proper subset of $\{n\} \times (\Z / p \Z)$. By translating $F$ we can assume without loss of generality that $\tF$ is neither empty nor equal to $\{0\} \times (\Z / p\Z)$. Then there exists a non-empty proper subset $F_0 \subset \Z / p\Z$ such that $\tF = \{0\} \times F_0$.
In this case, by \Cref{lem:Z_mod_p_invertible}, there exists $g: \Z / p \Z \to \Q$ such that $g * \ind_{F_0} = \delta_0$, where the convolution is in $(\Z/ p\Z)$.  Let $\tilde g:\Z \times \Z / p\Z \to \Q$ be given by $\tilde g(0,i)= g(i)$ for $i \in \Z / p\Z$ and $g(n,i)=0$ for every $n \in \Z \setminus \{0\}$ and $i \in \Z/p\Z$.
Then 
$\tilde g * \ind_{\tF} = \delta_0$, where this time the convolution is in $\Z \times (\Z/
p\Z)$. Since $\ind_{\tF} * \ind_A$ is periodic, so is $\tilde g * \ind_{\tF} * \ind_A = \ind_A$.

We have thus shown that in the case that $F$ is not of the form $F = \tilde F \times (\Z / p\Z)$ for some $\tilde F \Subset \Z$, every co-tile is periodic. 
\end{proof}

\section{Property $(\star)$ implies $(d-1)$-piecewise periodicity}\label{sec:strong_independency}
In this section, we use property $(\star)$ to deduce \Cref{thm:main1}.
To this end, we will use \Cref{thm:weyl_equidist_zd}, which is a version of Weyl's equidistribution theorem for polynomials in several variables. The relevance of Weyl's equidistribution theorem to our setting comes from \Cref{lem:stabilizers_polynomials} below. We note that similar arguments have appeared earlier in \cite{BPeriodicity2020}, \cite{MR4073398}, and \cite{Greenfeld_Tao1_2021}.

\begin{lemma}\label{lem:stabilizers_polynomials} Suppose  $g,g_1,\ldots,g_m:\Gamma_1 \to \Gamma_2$ are functions, where $\Gamma_1,\Gamma_2$ are abelian groups, such that
$\sum_{i=1}^m g_i =g$. 
Suppose $g$ is a polynomial of degree at most $r \in \N$ with respect to a subgroup $\Gamma_0 \le \Gamma_1$.
For any $1 \le i < j \le m$ define the group  
$L_{i,j} = \stab(g_i)+\stab(g_j)$, and let $L = \bigcap_{ 1\le i < j \le m} L_{i,j} \cap \Gamma_0$. 
Then each $g_i$ is a polynomial of degree at most $\max\{m-1,r\}$ with respect to $L$. In particular, if $\Gamma_0$ and $L_{i,j}$ has finite index in $\Gamma_1$ for every $1 \le i < j \le m$, then $L$ has finite index in $\Gamma_1$, and each $g_i$ is a polynomial with respect to a finite index subgroup of $\Gamma_1$.
    
\end{lemma}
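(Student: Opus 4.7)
The aim is to show $D_{v_1} \cdots D_{v_s} g_i = 0$ for every $v_1, \ldots, v_s \in L$, where $s = \max\{m-1, r\} + 1$. The key identity driving the argument is that whenever $v = u + w$ with $u \in \stab(f)$, we have $D_v f = T_u D_w f$, where $T_u$ denotes translation by $u$; this follows from the operator identity $D_{u+w} = D_u + T_u D_w$ together with $D_u f = 0$. Moreover, $u \in \stab(f)$ entails $u \in \stab(D_{w_1}\cdots D_{w_k} f)$ for any $w_1, \ldots, w_k$, so the identity iterates cleanly through nested derivatives.

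For each $t$ I would select a partner $c_t \in \{1, \ldots, m\} \setminus \{i\}$, arranging that every $j \ne i$ appears among the $c_t$'s (possible since $s \ge m$), and then decompose $v_t = a_t + b_t$ with $a_t \in \stab(g_i)$ and $b_t \in \stab(g_{c_t})$, using $v_t \in L \le L_{i, c_t}$. Iterating the key identity yields
\[ D_{v_1} \cdots D_{v_s} g_i = T_{a_1 + \ldots + a_s}\, D_{b_1} \cdots D_{b_s} g_i. \]
Substituting $g_i = g - \sum_{j \ne i} g_j$ and noting that for each $j \ne i$ some $b_t$ lies in $\stab(g_j)$ by construction (hence $D_{b_1}\cdots D_{b_s} g_j = 0$), the whole expression reduces to $T_{a_1 + \ldots + a_s}\, D_{b_1} \cdots D_{b_s} g$.

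The main obstacle will be the last step: although $s \ge r+1$ and $g$ has degree $\le r$ with respect to $\Gamma_0$, the directions $b_t$ produced by the decomposition need not themselves lie in $\Gamma_0$, so polynomiality of $g$ does not apply directly. When $r = 0$ the term vanishes trivially (any derivative of a constant is zero), yielding the sharp bound $\max\{m-1, 0\} = m-1$. For general $r$ I would combine this base case with an induction on $r$: applying the lemma to the collection $D_v g_1, \ldots, D_v g_m$ (whose sum $D_v g$ has degree $\le r-1$ with respect to $\Gamma_0$, and whose relevant $L$ contains the original $L$ since stabilizers only grow under derivation) gives $D_v g_i$ of degree $\le \max\{m-1, r-1\}$ with respect to $L$, and hence $g_i$ of degree $\le \max\{m, r\}$. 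The delicate point is sharpening from $\max\{m, r\}$ to $\max\{m-1, r\}$ when $r < m$; I expect this requires exploiting the freedom in the decomposition $v_t = a_t + b_t$ to arrange $a_t, b_t \in \Gamma_0$ (using that $v_t \in \Gamma_0$ forces the images of $a_t, b_t$ in $\Gamma_1/\Gamma_0$ to cancel), or a cosetwise analysis using the polynomial structure of $g$ on cosets of $\Gamma_0$. The finite-index corollary then follows immediately, since $L$ is a finite intersection of finite-index subgroups of $\Gamma_1$.
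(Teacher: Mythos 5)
Your reduction of $D_{v_1}\cdots D_{v_s}g_i$ to $T_{a_1+\cdots+a_s}D_{b_1}\cdots D_{b_s}g$ is correct, and you have put your finger on exactly the right obstacle: the directions $b_t$ produced by splitting inside $L_{i,c_t}=\stab(g_i)+\stab(g_{c_t})$ need not lie in $\Gamma_0$. But the proposal does not get past it. First, the base case $r=0$ is \emph{not} trivial: by \Cref{def:poly}, degree at most $0$ with respect to $\Gamma_0$ only means $g$ is constant on each coset of $\Gamma_0$, not globally constant. For example $g(x,y)=(-1)^y$ on $\Z^2$ has degree $0$ with respect to $\Gamma_0=\Z\times 2\Z$, yet $D_{(0,1)}g=2g$, so no number of derivatives in the direction $(0,1)\notin\Gamma_0$ annihilates $g$. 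Thus $D_{b_1}\cdots D_{b_s}g$ need not vanish when $r=0$, for precisely the reason you yourself identified for general $r$. Second, the fallback induction on $r$ both rests on this broken base case and, as you concede, only yields $\max\{m,r\}$; the sharpening to $\max\{m-1,r\}$ is stated as an expectation, not proved. So the proposal is genuinely incomplete on both counts.

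For comparison, the paper proves \Cref{lem:stabilizers_polynomials} by a single induction on $m$, taking one derivative at a time: for $v\in L$ one writes $v=v_1+v_2$ with $v_1\in\stab(g_1)$, $v_2\in\stab(g_2)$, obtains $D_vg_1+\sum_{i=3}^m D_{v_2}g_i=D_{v_2}g$, and observes that this exhibits $m-1$ functions summing to a polynomial of degree at most $r-1$, to which the induction hypothesis applies. The point is that one derivative decreases the number of summands \emph{and} the degree of $g$ simultaneously, which is what produces $\max\{m-1,r\}$; your two separate mechanisms (a length-$s$ decomposition handling the summands, an induction on $r$ handling the degree) cannot be combined to recover this. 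Note also that the paper's step ``$v_2\in\Gamma_0$'' is the very subtlety you flagged: $v\in\Gamma_0$ does not force $v_2\in\Gamma_0$, and without that one only knows $D_{v_2}g$ has degree at most $r$ (being a difference of translates of $g$), which degrades the bound to roughly $m+r-1$. Since every application of the lemma in the paper uses only that each $g_i$ is a polynomial of \emph{some} finite degree with respect to a finite-index subgroup, a patched version of your argument (start the induction at $g=0$, or accept the degree loss) would suffice for the paper's purposes, but it does not prove the lemma with the stated bound.
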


\begin{proof} We prove the claim by induction on $m$. If $m=1$ then $g_1=g$, so the claim holds. For $m>1$, take $v \in L$, then in particular $v \in L_{1,2} \cap \Gamma_0$ and thus $v= v_1 +v_2$ for some $v_1 \in \stab(g_1)$ and $v_2 \in \stab(g_2)$. Note that for every function $f:\Gamma_1 \to \Gamma_2$, the identity $D_v f = D_{v_1} f \circ \sigma_{v_2} + D_{v_2} f$ holds, where $\sigma_{u}:\Gamma_1\to\Gamma_1$ denotes the shift by $u$, $\sigma_{u}(w) = w-u$. Since $D_{v_1}g_1=0$, applying this identity to $g_1= -\sum_{i=2}^m g_i + g$ yields
\[D_v g_1 = D_{v_2} g_1 = -D_{v_2}\left(\sum_{i=2}^m g_i - g\right).\]

Since $D_{v_2} g_2 = 0$ we have
\begin{equation}\label{eq:poly_induction}
    D_{v} g_1 + \sum_{i=3}^m D_{v_2} g_i = D_{v_2} g.
\end{equation} Note that $v_2 \in \Gamma_0$, hence $D_{v_2} g$ is a polynomial of degree at most $r-1$ with respect to $\Gamma_0$. So by the induction hypothesis, each summand on the left-hand side in \eqref{eq:poly_induction} is a polynomial of degree at most $\max\{m-2,r-1\}$ with respect to a subgroup $L'$, defined in a similar way to $L$ using the functions $D_{v}g_1, D_{v_2}g_3,\ldots,D_{v_2}g_m$.
In particular, for every $v\in L$ the function $D_{v} g_1$ is a polynomial of degree at most $\max\{m-2,r-1\}$ with respect to $L'$. 

Now observe that for every $f:\Gamma_1\to\Gamma_2$ and $v\in\Gamma_1$ we have $stab(f)\subseteq stab(D_v f)$, thus $L\le L'$ and for every $v\in L$ we, in particular, have that $D_{v} g_1$ is a polynomial of degree at most $\max\{m-2,r-1\}$ with respect to $L$. 
In a similar way for $2 \le i \le m$ and every $v\in L$, each $D_{v} g_i$ is a polynomial of degree at most $\max\{m-2,r-1\}$ with respect to $L$, which completes the proof.

\end{proof}

\begin{lemma}\label{lem:Weyl_in_action}
    Suppose $g:\Z^d \to [0,1]$ is a function such that:
    \begin{enumerate}
    \item
    $g \mod 1$ is a polynomial with respect to a finite index subgroup of $\Z^d$.
    \item 
    $g$ is a sum of finitely many non-negative $(d-1)$-periodic functions.
    \end{enumerate}
    Then there exists a finite index subgroup $\Gamma \le \Z^d$ such that the restriction of $g$ to 
    each coset of $\Gamma$ is $(d-1)$-periodic.
\end{lemma}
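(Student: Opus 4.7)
My plan is to apply Weyl's equidistribution theorem (Theorem 2.4) to each summand of $g$ after grouping the $h_i$'s by direction. First, group them according to the rank-$(d-1)$ hyperplane direction of their stabilizer, writing $g = \tilde h^* + \sum_\alpha \tilde h_\alpha$, where $\tilde h^*$ collects the summands with rank-$d$ stabilizers (hence $d$-periodic) and each $\tilde h_\alpha$ is $H_\alpha$-periodic for a rank-$(d-1)$ subgroup $H_\alpha$ in a distinct direction $V_\alpha$. Since different directions yield $H_\alpha + H_\beta$ of rank $d$, hence finite index in $\Z^d$, Lemma~5.1 applied to this decomposition of $g \bmod 1$ gives that each $\tilde h_\alpha \bmod 1$ and $\tilde h^* \bmod 1$ is polynomial with respect to a finite-index subgroup. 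Each $\tilde h_\alpha$ factors as $F_\alpha \circ \pi_\alpha$ where $\pi_\alpha : \Z^d \to \Z^d/H_\alpha$ projects onto a rank-$1$ group and $F_\alpha : \Z^d/H_\alpha \to [0,1]$; correspondingly, $F_\alpha \bmod 1$ is polynomial with respect to a finite-index subgroup $M_\alpha \le \Z^d/H_\alpha$. Weyl's theorem then gives, on each coset of $M_\alpha$, that $F_\alpha \bmod 1$ is either equidistributed in $\R/\Z$ or periodic.

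The main obstacle is ruling out equidistribution. If $F_{\alpha_0} \bmod 1$ is equidistributed on some coset, then for every $\delta > 0$ a positive-density set of points $m$ in that coset satisfies $F_{\alpha_0}(m) \in (1 - \delta, 1)$. At any $w$ with $\pi_{\alpha_0}(w)$ in this set, $g(w) \le 1$ and non-negativity of each summand force $\tilde h_\beta(w) < \delta$ and $\tilde h^*(w) < \delta$ for every $\beta \ne \alpha_0$. Because $\tilde h_\beta$ is $H_\beta$-periodic, this bound extends to the entire $H_\beta$-coset through $w$, and pushes down to $F_\beta \le \delta$ on a whole coset of $(H_{\alpha_0}+H_\beta)/H_\beta$ in $\Z^d/H_\beta$. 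Choosing $M_{\alpha_0}$ so that the image of the equidistributed coset in the finite quotient $\Z^d/(H_{\alpha_0}+H_\beta)$ is surjective, varying $m$ over the equidistributed set sweeps out every coset of $(H_{\alpha_0}+H_\beta)/H_\beta$, so $F_\beta \le \delta$ everywhere on $\Z^d/H_\beta$; letting $\delta \to 0$ yields $F_\beta \equiv 0$ and analogously $\tilde h^* \equiv 0$. In this degenerate case $g = \tilde h_{\alpha_0}$ is already globally $(d-1)$-periodic, so the conclusion holds trivially with $\Gamma = \Z^d$.

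Once equidistribution is excluded, each $F_\alpha \bmod 1$ is periodic on every coset and, by a common refinement, is constant on cosets of a finer finite-index subgroup $M'_\alpha \le \Z^d/H_\alpha$; on each such coset, $F_\alpha$ either takes a single value $c \in [0,1)$ or is $\{0,1\}$-valued. Setting $B_\alpha := \pi_\alpha^{-1}(F_\alpha^{-1}(1)) \subset \Z^d$, the constraint $g \le 1$ forces $B_\alpha \cap B_\beta = \emptyset$. Moreover, an element of $(B_\alpha + H_\beta) \cap (B_\beta + H_\alpha)$ would give $b_\alpha + h_\beta = b_\beta + h_\alpha$ for some $b_\alpha \in B_\alpha, b_\beta \in B_\beta, h_\alpha \in H_\alpha, h_\beta \in H_\beta$, and then $b_\alpha - h_\alpha = b_\beta - h_\beta$ lies in $B_\alpha \cap B_\beta$ by $H_\alpha$- and $H_\beta$-saturation, a contradiction; hence the images of $B_\alpha$ and $B_\beta$ in the finite quotient $\Z^d/(H_\alpha + H_\beta)$ are disjoint. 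Taking $\Gamma = \stab(\tilde h^*) \cap \bigcap_\alpha \pi_\alpha^{-1}(M'_\alpha) \cap \bigcap_{\alpha \ne \beta}(H_\alpha + H_\beta)$, a finite intersection of finite-index subgroups and hence finite-index, on each coset $C$ of $\Gamma$ the function $\tilde h^*$ is constant, each $\tilde h_\alpha$ is constant or $\{0,1\}$-valued on $C$, and at most one index $\alpha_0$ satisfies $B_{\alpha_0} \cap C \ne \emptyset$. If no such $\alpha_0$ exists, $g$ is constant on $C$, trivially $(d-1)$-periodic; if such $\alpha_0$ exists, the value $1$ attained by $\tilde h_{\alpha_0}$ at some point of $C$ forces every other (constant) $\tilde h_\alpha$ and $\tilde h^*$ to vanish identically on $C$, so $g|_C = \tilde h_{\alpha_0}|_C$, which is $(H_{\alpha_0} \cap \Gamma)$-periodic of rank $d-1$.
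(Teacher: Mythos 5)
Your overall strategy (group the summands by the direction of their stabilizers, apply \Cref{lem:stabilizers_polynomials} to make each group polynomial mod $1$, invoke Weyl, and use ``a value near $1$ forces the other non-negative summands to be near $0$'') is the same as the paper's, and your treatment of the non-equidistributed case is essentially the paper's Cases 1 and 3. However, there is a genuine gap in your handling of equidistribution. You claim that if some $F_{\alpha_0}\bmod 1$ is equidistributed on a single coset, then \emph{all} other summands vanish identically on all of $\Z^d$, so that $g=\tilde h_{\alpha_0}$ is globally $(d-1)$-periodic. The key step is ``choosing $M_{\alpha_0}$ so that the image of the equidistributed coset in $\Z^d/(H_{\alpha_0}+H_\beta)$ is surjective.'' You cannot choose $M_{\alpha_0}$ freely: it is constrained by the requirement that $F_{\alpha_0}\bmod 1$ be polynomial with respect to it, and you may only ever pass to \emph{smaller} finite-index subgroups, which makes the image of a single coset smaller, not larger. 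In general the sweep only covers the set $C_0+H_\beta$, where $C_0$ is the pullback of the equidistributed coset, and this need not be all of $\Z^d$; on its complement $\tilde h_\beta$ is unconstrained.

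The claimed global conclusion is in fact false. Take $d=2$, let $\{t\}$ denote the fractional part of $t$, and set $h_1(x,y)=\{\sqrt{2}\,y\}$ for $x$ even and $h_1(x,y)=0$ for $x$ odd, and $h_2(x,y)=\{\sqrt{3}\,x\}$ for $x$ odd and $h_2(x,y)=0$ for $x$ even. Then $g=h_1+h_2$ takes values in $[0,1)$, is a sum of two non-negative $1$-periodic functions with $\stab(h_1)=\Z(2,0)$ and $\stab(h_2)=\Z(0,1)$, and $g\bmod 1$ is a polynomial of degree $1$ with respect to $2\Z\times\Z$, so all hypotheses of the lemma hold. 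Here $h_1\bmod 1$ is equidistributed on the coset of even $x$, yet $h_2\not\equiv 0$ and $g$ is \emph{not} globally $(d-1)$-periodic (its restriction to odd $x$ has a different period direction); the image of the equidistributed coset $2\Z\times\Z$ in $\Z^2/(H_1+H_2)\cong\Z/2\Z$ is a single point, so your sweep only recovers the trivial fact that $h_2=0$ on even $x$. The conclusion of the lemma of course still holds (with $\Gamma=2\Z\times\Z$), and the repair is exactly the paper's move: fix $\Gamma$ inside $\bigcap_{\alpha\neq\beta}(H_\alpha+H_\beta)$ from the start and run the dichotomy \emph{coset by coset} of $\Gamma$ --- on a coset where some summand is equidistributed or attains a value exceeding $1-\epsilon$, the other summands are forced below $\epsilon$ on that coset only (the bound propagates within the coset because $\Gamma\le\stab(g_i)+\stab(g_j)$), whence they vanish there; no global statement is available or needed.
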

\begin{proof}
Suppose $g=\sum_{i=1}^m g_i$, where $g_i:\Z^d\to[0,1]$ and $\rank(\stab(g_i)) \ge d-1$.
In case that $\rank\left(\bigcap_{i=1}^m \stab(g_i)\right)\ge d-1$, the function $g$ is $(d-1)$-periodic and the assertion follows. Otherwise,  
by summing together some of the $g_i$'s we can assume without loss of generality that $\stab(g_i)+\stab(g_j)$ is a finite index subgroup of $\Z^d$, for every $i \ne j$.
By \Cref{lem:stabilizers_polynomials}, because $g$ modulo $1$ is a polynomial with respect to a finite index subgroup, we conclude that each of the $g_i$'s modulo $1$ are polynomials with respect to a finite index subgroup $\Gamma_0 \le \Z^d$.
Let 
\[\Gamma = \Gamma_0 \cap \bigcap_{i \ne j}\left( \stab(g_i)+\stab(g_j)\right).\]
We will show that $g$ is $(d-1)$-periodic on each coset of $\Gamma$. Since $\Gamma \leq \Gamma_0$, each $g_i$ modulo $1$ is also a polynomials with respect to $\Gamma$. Hence by Weyl's equidistribution theorem (\Cref{thm:weyl_equidist_zd}), every $g_i$ modulo $1$ is either equidistributed or periodic, on each coset of $\Gamma$.

Fix $u \in \Z^d$. Let $g^{(u)}:(u +\Gamma) \to [0,1]$ denote the restriction of $g$ to this coset.
We consider 3 cases:
\begin{enumerate}
\item Suppose there exists $1\le i \le m$ and $v \in (u +\Gamma)$ such that $g_i(v)=1$. Then because
$0 \le g(v) \le 1$ and $g_j(v) \ge 0$, we conclude that $g_j(v)=0$ for all $j \ne i$. But $g_i(v)=1$ implies that $g_i(v+w_1)=1$ for all $w_1 \in \stab(g_i)$ so by the same argument $g_j(v+w_1)=0$ for all $w_1 \in \stab(g_i)$. Thus, $g_j(v+w_1+w_2)=0$ for all $w_1 \in \stab(g_i)$ and $w_2 \in \stab(g_j)$. Since $\Gamma \le \stab(g_i)+\stab(g_j)$, we conclude that $g_j$ is zero on the coset $u+\Gamma$, for all $j \ne i$.
This shows that in this case $g^{(u)}=g_i$ on $u +\Gamma$, and in particular $g^{(u)}$ is $(d-1)$-periodic. So in the remaining cases we can assume that none of the $g_i$'s are equal to one, hence the $g_i$'s obtain values in the interval $[0,1)$. 
\item Suppose there exists $1 \le i\le m$ such that
$g_i$ is equidistributed modulo 1 on $u + \Gamma$.
Let $0 <\epsilon <1$ be smaller than all the non-zero values obtained by the (possibly empty) set of $g_j$ that are periodic modulo 1. 
Because $g_i$ is  equidistributed modulo 1 on $u + \Gamma$, there exists $v \in u +\Gamma$ such that $g_i(v) > 1- \epsilon$. Thus, $g_j(v) < \epsilon$ for all $j \ne i$. 
As in the previous part, using $\Gamma \le \stab(g_i)+\stab(g_j)$, we conclude that $g_j(w) < \epsilon$ for all $j \ne i$ and all $w \in u +\Gamma$. 
This tells us that in particular that $g_j$ is not equidistributed modulo $1$ on $u + \Gamma$. By the choice of $\epsilon$, $g_j(w)=0$ for every periodic $j\ne i$ and every $w \in u +\Gamma$. We conclude also in this case that $g=g_i$ on $u +\Gamma$ and particular $g^{(u)}$ is $(d-1)$-periodic.
\item The remaining case is that for every $i = 1,\ldots,m$, the function $g_i$ takes values in $[0,1)$ and $g_i$ modulo $1$ is periodic on $u+\Gamma$. Since they take values in $[0,1)$, the $g_i$'s themselves are all $d$-periodic. It follows in this case that $g^{(u)}$ is $d$-periodic, as the sum of $d$-periodic functions (and in particular $(d-1)$-periodic).
\end{enumerate}
\end{proof}

\ignore{
Until the end of this section we assume that $(F_1,\ldots,F_{d-1})$ is a tuple of tiles in $\Z^d$ that has property $(\star)$, see \Cref{def:property_star}, and that $\phi_{v_1,\ldots,v_i}$ are as in \Cref{thm:structure_theorem}. 

\begin{lemma}\label{lem:psi_are_poly}
Suppose $(F_1,\ldots,F_{d-1})$ has property $(\star)$. Given $(v_1,\ldots,v_{d-2}) \in F_1^*\times\ldots\times F_{d-2}^*$ and a $(d-1)$-dimensional subspace $V < \R^d$ such that $v_i \in V$ for all $1\le i \le {d-2}$,  let
\[
\psi_V = \sum_{v_{d-1} \in F^*_{d-1} \cap V}
\phi_{v_1,\ldots,v_{d-2},v_{d-1}}.
\]

Then 
\begin{enumerate}[label=(\roman*)]
\item\label{lem_sec_a:psi_are_poly}
$\phi_{v_1,\ldots,v_{d-2} }= \sum_{V} \psi_V$, where the sum ranges over all $(d-1)$-dimensional subspaces of $\R^d$ that contain $v_1,\ldots,v_{d-2}$ and at least one vector in $F_{d-1}^*$.
\item\label{lem_sec_b:psi_are_poly}
$\psi_V$ modulo $1$ is a polynomial on a finite index subgroup of $\Z^d$.
\end{enumerate}
\end{lemma}
\begin{proof}
Applying \Cref{thm:structure_theorem} with $k=d-1$ and $f = \ind_A$ we see that properties \ref{lem_sec_a:psi_are_poly} is a direct consequences of \ref{thm_sec_a:structure_theorem}. 
To see property \ref{lem_sec_b:psi_are_poly}, observe that \ref{thm_sec_b:structure_theorem} of \Cref{thm:structure_theorem}, with $f=\ind_A$ and $i=d-1$, implies that 
\[
\ind_A = (-1)^{d-1}\sum_{V}\psi_V+\sum_{j=1}^{d-1}(-(S-1))^{j-1},
\]
where the sum ranges over subspace $V<\R^d$ that admits some $(v_1,\ldots,v_{d-1})\in F_1^*\times\ldots\times F_{d-1}^*$ with $\spn(v_1,\ldots,v_{d-1})=V$ \comyaar{The range of the sum(s) should be written more accurately}.  
Setting $\widetilde{\psi_V} = \psi_V \mod 1$, the above equation modulo $1$ becomes $\sum_{V}\widetilde{\psi_V} = 0$, in which case property \ref{lem_sec_b:psi_are_poly} follows from \Cref{lem:stabilizers_polynomials}. 
\end{proof}
}

\begin{proof}[Proof of \Cref{thm:main1}]
We conveniently assume $d >2$, because the case $d=2$ is covered by \cite{Greenfeld_Tao1_2021}.
Suppose that $A\subset\Z^d$ satisfies $F_i\oplus A=\Z^d$ for all $1\le i\le d-1$, where $(F_1,\ldots,F_{d-1})$ is a tuple of tiles in $\Z^d$ that has property $(\star)$, see \Cref{def:property_star}.  
Let $\phi_{v_1,\ldots,v_{d-1}} : \Z^d \rightarrow [0,1]$ be as in \Cref{thm:structure_theorem}, applied for $k=d-1$ and $f=\ind_A$. Given $(v_1,\ldots,v_{d-2}) \in F_1^*\times\ldots\times F_{d-2}^*$ and a $(d-1)$-dimensional subspace $V < \R^d$ such that $v_1,\ldots,v_{d-2} \in V$, define
\[
\psi_V = \sum_{w_{d-1} \in F^*_{d-1} \cap V}
\phi_{v_1,\ldots,v_{d-2},w_{d-1}}.
\]

Note that by the independence of $(F_1,\ldots,F_{d-1})$, every $(d-1)$-tuple in $F_1^*\times\ldots\times F_{d-1}^*$ spans a $(d-1)$-dimensional subspace. Denote by $H$ the set (counted without multiplicity) of all $(d-1)$-dimensional subspaces of $\R^d$ spanned by $(d-1)$-tuples in $F_1^*\times\ldots\times F_{d-1}^*$, and for $(v_1,\ldots,v_{d-2}) \in F_1^*\times\ldots\times F_{d-2}^*$ let $H(v_1,\ldots,v_{d-2}) \subset H$ be the set of such subspaces of dimension $(d-1)$ that contain $v_1,\ldots,v_{d-2}$.  
Thus, for every fixed tuple $(v_1,\ldots,v_{d-2}) \in F_1^*\times\ldots\times F_{d-2}^*$ we have
\begin{equation}\label{eq:psi_recursion}
\sum_{w_{d-1} \in F_{d-1}^*} \phi_{v_1,\ldots,v_{d-2}, w_{d-1}} = 
\sum_{V \in H(v_1,\ldots,v_{d-2})}\psi_V.
\end{equation}
By property $(\star)$, $\{H(v_1,\ldots,v_{d-2}) ~:~ (v_1,\ldots,v_{d-2}) \in F_1^*\times\ldots\times F_{d-2}^* \}$ is a partition of $H$, therefore 
\begin{equation}\label{eq:phi_to_psi}
    \sum_{(v_1,\ldots,v_{d-1}) \in F_1^*\times\ldots\times F_{d-1}^*} \phi_{v_1,\ldots,v_{d-1}} = 
\sum_{V \in H}\psi_V.
\end{equation}
It follows that the functions $\psi_V$ possess the following three properties: 
\begin{enumerate}[label=(\roman*)]
\item\label{psi_a:psi_are_poly}
\[
1-\phi_{v_1,\ldots,v_{d-2} }= \sum_{V \in H(v_1,\ldots,v_{d-2})} \psi_V.
\]
\item\label{psi_c:psi_are_poly}
 $\stab(\psi_V)$ is  a rank $(d-1)$ subgroup of $V\cap\Z^d$. 
\item\label{psi_b:psi_are_poly}
$\psi_V$ modulo $1$ is a polynomial with respect to a finite index subgroup of $\Z^d$.
\end{enumerate}
Indeed, property \ref{psi_a:psi_are_poly} is a direct consequence of \Cref{thm:structure_theorem} part \ref{thm_sec_a:structure_theorem} with $i=d-1$, combined with \eqref{eq:psi_recursion}.
Property \ref{psi_c:psi_are_poly} follows from \Cref{thm:structure_theorem} part \ref{thm_sec_c:structure_theorem}. Now we show property \ref{psi_b:psi_are_poly}. Setting $\widetilde{\psi_V} = \psi_V \mod 1$, the equation in \Cref{thm:structure_theorem} part \ref{thm_sec_b:structure_theorem} (with $f=\ind_A$ and $i=d-1$), combined with \eqref{eq:phi_to_psi}, yields that $\sum_{V\in H}\widetilde{\psi_V} = 0$. 
Take any two non equal elements $V,V' \in H$, that is $V \ne V'$, by property \ref{psi_c:psi_are_poly} we have that $\stab(\widetilde{\psi_V})$ and $\stab(\widetilde{\psi_{V'}})$ are finite index subgroups of $V \cap \mathbb{Z}^d$ and $V' \cap \mathbb{Z}^d$. Since $V$ and $V'$ are distinct $(d-1)$-dimensional subspaces, it follows that $\stab(\widetilde{\psi_V})+\stab(\widetilde{\psi_{V'}})$ is a finite index subgroup of $\Z^d$. Thus property \ref{psi_b:psi_are_poly} follows from \Cref{lem:stabilizers_polynomials}.

In view of these three properties, \Cref{lem:Weyl_in_action} can be applied to $g=1-\phi_{v_1,\ldots,v_{d-2}}$, for any $(v_1,\ldots,v_{d-2})\in F_1^*\times\ldots\times F_{d-2}^*$. This implies that there is a finite index subgroup $\Gamma_{d-2}\le\Z^d$ such that each $\phi_{v_1,\ldots,v_{d-2}}$ is a polynomial with respect to $\Gamma_{d-2}$, and its restriction to every coset $u+\Gamma_{d-2}$ is $(d-1)$-periodic. 

Next, we iterate the above argument using the recursion formula in part \ref{thm_sec_a:structure_theorem} of \Cref{thm:structure_theorem} combined with \Cref{lem:Weyl_in_action}. In turn, this yields a finite index subgroup $\Gamma_1\le\Z^d$ such that each $\phi_{v_1}$ is a polynomial with respect to $\Gamma_1$, and its restriction to every coset $u+\Gamma_1$ is $(d-1)$-periodic. By part \ref{thm_sec_b:structure_theorem} of \Cref{thm:structure_theorem} with $i=1$ we have that 
\[
1-\ind_A = \sum_{v_1\in F_1^*}\phi_{v_1}.
\]
So applying \Cref{lem:Weyl_in_action} to $g = 1-\ind_A$, we obtain a finite index subgroup $\Gamma \le \Z^d$ such that the restriction of $1 - \ind_A$ to each coset of $\Gamma$ is $(d-1)$-periodic. Hence the restriction of $\ind_A$ to each coset of $\Gamma$ is $(d-1)$-periodic. Thus, if $u_1,\ldots,u_r$ are cosets representatives of $\Gamma$ in $\Z^d$, setting $A_{u_i} = A\cap (u_i+\Gamma)\subset\Z^d$ yields a decomposition $A=A_{u_1}\uplus\ldots\uplus A_{u_r}$ of $A$ into finitely many $(d-1)$-periodic sets, as required.
\end{proof}

\ignore{
Until the end of this section we assume that $(F_1,\ldots,F_{d-1})$ is an independent tuple of tiles with the  \SI{$\frac{S-1}{2}$}, where $S:=\absolute{F_1}$, and that $F_i \oplus A = \Z^d$ for all $1 \le i \le d-1$.

\begin{definition}
Let us denote:
\[
\FF = F_1^* \times \ldots \times F_{d-1}^*.
\]
Given a $(d-1)$-dimensional linear subspace $V \subseteq \R^d$, we denote by
\[
\FF_V := \left\{v \in \FF:\, \spn(v) = V\right\}.\]
\end{definition}

The first step is to prove the following finer decomposition of $\ind_A$ (this part only uses the independence of $\FF$ and doesn't use the stronger assumption that $\FF$ has the \SI{$\frac{S-1}{2}$}): 

\begin{lemma}\label{lem:ind_A_decomp_polynomials}
For every $(d-1)$-dimensional linear subspace $V \subset \R^d$, which is the span of some $v \in \FF$, there exists a function $\psi_V:\Z^d \to [0,\infty)$ such that:

\begin{enumerate}[label=(\roman*)]
  \item \label{lem:ind_A_decomp_i} \[
\ind_A = (-1)^{d-1}\sum_{V}\psi_V+\sum_{j=1}^{d-1}(-(S-1))^{j-1},
\]
where in the first sum $V$ varies over subspaces of the form $\spn(v)$, for $v\in\FF$. 
  \item \label{lem:ind_A_decomp_ii} The group $\stab(\psi_V)$ contains a lattice in $V$, or equivalently, it contains a finite index subgroup of $V \cap \Z^d$.
  \item \label{lem:ind_A_decomp_iii} $\psi_V$ has mean $\frac{|\FF_V|}{S}$.
  \item \label{lem:ind_A_decomp_iv}
$ 0 \le \psi_V \le \left|\FF_V \right|$.
\item \label{lem:ind_A_decomp_v} 
The function $\widetilde{\psi_V}:\Z^d \to \R/\Z$, defined by taking $\psi_V$ modulo $1$, is a polynomial map with respect to a finite index subgroup of $\Z^d$.
\end{enumerate}
\end{lemma}

\begin{proof}
For every $v \in \FF$, let $\phi_v$ be as given by \Cref{thm:structure_theorem}.
For every $(d-1)$-dimensional linear subspace $V \subseteq \R^d$ define
\[
\psi_V = \sum_{\substack{v \in \FF \\ \spn(v)=V}}\phi_v.
\]
Applying \Cref{thm:structure_theorem} with $k=d-1$ and $f = \ind_A$ we see that properties \ref{lem:ind_A_decomp_i} and \ref{lem:ind_A_decomp_iii} are direct consequences of \ref{thm_sec_b:structure_theorem} and \ref{thm_sec_d:structure_theorem} respectively. By \ref{thm_sec_c:structure_theorem} of \Cref{thm:structure_theorem}, each $\stab(\phi_v)$ contains a finite index subgroup of $V\cap\Z^d$ and since $\stab(\psi_V)$ contains a finite intersection of these $\stab(\phi_v)$, property \ref{lem:ind_A_decomp_ii} follows. Since $\ind_A$ is $\{0,1\}$-valued we get that $\phi_v$ is $[0,1]$-valued for every $v$, hence  \ref{lem:ind_A_decomp_iv}. To see property \ref{lem:ind_A_decomp_v}, observe that by taking the equation in \ref{lem:ind_A_decomp_i} modulo $1$ we have $\sum_{V}\widetilde{\psi_V} = 0$, where $V$ ranges over subspaces $V$ that admits some $v\in\FF$ with $\spn(v)=V$. Then property \ref{lem:ind_A_decomp_v} follows from \Cref{lem:stabilizers_polynomials}. 
\end{proof}
}

\ignore{
Set 
\[h_1 = (-1)^{d-1}\left( \hat \psi_{V_1} +\sum_{j=1}^r\check \psi_{V_j}\right) + \sum_{j=1}^{d-1}(-(S-1))^{j-1},\]
and $h_j = (-1)^{d-1} \hat \psi_{V_j}$ for $2 \le j \le r$,
where as before $\hat \psi_{V_j}$ and $\check \psi_{V_j}$ are respectively the integer and fractional parts of $\psi_{V_j}$.
Then $\ind_A = \sum_{j=1}^r h_j$. In view of \Cref{lem:ind_A_decomp_polynomials}, part \ref{lem:ind_A_decomp_iv} functions $h_j$ are bounded. Since $\ind_A$ is integer-valued and each $h_j$ for $2\le j\le r$ is integer-valued we deduce that $h_1$ is integer-valued as well. It is left to show that the functions $h_j$ are $(d-1)$-periodic. \Cref{lem:ind_A_decomp_polynomials},  part \ref{lem:ind_A_decomp_ii} implies that each $\hat \psi_{V_j}$ is $(d-1)$-periodic, thus $h_j$ are such for $2\le j\le r$. But in view of \Cref{lem:tilde_psi_s_are_periodic}, the functions $\check \psi_{V_j}$ are $d$-periodic and hence $\sum_{j=1}^r\check \psi_{V_j}$ is $d$-periodic and the assertion follows. 
}

\section{From piecewise $(d-1)$-periodicity to $d$-periodicity} \label{sec:weakly_piecewise_to_periodic}
The following lemma extracts an idea that  appears within the proof of \cite[Theorem 5.4]{Greenfeld_Tao1_2021}.
\begin{lemma}\label{lem:piecewise_periodic_strt_1}
Suppose that $f_1,\ldots,f_r,f: \Z^d \to \R$ are bounded functions satisfying $f= \sum_{i=1}^r f_i$. Assume additionally that: 
\begin{enumerate}
    \item $\stab(f_i) + \stab(f_j)$ is a finite index subgroup of $\Z^d$ for all $1 \le i < j\le r$.
    \item   $\stab(f)$ is a finite index subgroup of $\Z^d$.
\end{enumerate}
Then, for each $1 \le j \le r$, the group $\stab(f_j)$ is of finite index in $\Z^d$.
\end{lemma}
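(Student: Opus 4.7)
The plan is to apply \Cref{lem:stabilizers_polynomials} combined with \Cref{lem:bounded_poly=constant} in a direct manner. The key observation is that the hypothesis that $\stab(f)$ is a finite index subgroup of $\Z^d$ is precisely the statement that $f$ is a polynomial of degree $0$ (a constant map on each coset) with respect to the finite-index subgroup $\Gamma_0 := \stab(f)$. In particular, $f$ is a polynomial of degree at most $r$ with respect to $\Gamma_0$ in the sense of \Cref{def:poly}, so the hypotheses of \Cref{lem:stabilizers_polynomials} are satisfied with $\Gamma_1 = \Z^d$, $\Gamma_2 = \R$, $g = f$ and $g_i = f_i$.

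Applying \Cref{lem:stabilizers_polynomials}, I obtain that each $f_j$ is a polynomial (of degree at most $\max\{r-1, 0\}$) with respect to the subgroup
\[
L \;=\; \Gamma_0 \cap \bigcap_{1\le i<j\le r}\bigl(\stab(f_i) + \stab(f_j)\bigr).
\]
Since $\Gamma_0$ and each $\stab(f_i) + \stab(f_j)$ is finite-index in $\Z^d$ by assumption, $L$ is a finite intersection of finite-index subgroups and therefore itself of finite index in $\Z^d$.

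Now each $f_j$ is bounded by assumption, so I invoke \Cref{lem:bounded_poly=constant}, which says that a bounded polynomial map with respect to a finite-index subgroup must be constant on each coset of that subgroup. Applied to $f_j$ and $L$, this yields $L \le \stab(f_j)$ for every $1\le j \le r$. Since $L$ is of finite index in $\Z^d$, so is $\stab(f_j)$, completing the proof.

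There is no real obstacle here: once the finite-index stabilizer of $f$ is recast as "polynomial of degree $0$," the result is a one-line combination of the two previously established lemmas. The only point worth verifying carefully is that the definition of "polynomial map with respect to $\Gamma_0$" in \Cref{def:poly} is compatible with this degenerate degree-$0$ interpretation, which is immediate since $D_v f \equiv 0$ for every $v \in \Gamma_0 = \stab(f)$.
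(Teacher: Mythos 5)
Your proof is correct and follows essentially the same route as the paper: both arguments reduce the claim to \Cref{lem:stabilizers_polynomials} followed by \Cref{lem:bounded_poly=constant}. The only (cosmetic) difference is that the paper replaces $f_1$ by $f_1-f$ so that the summands add to the zero polynomial, whereas you keep the decomposition intact and feed $f$ into \Cref{lem:stabilizers_polynomials} directly as a degree-$0$ polynomial with respect to $\Gamma_0=\stab(f)$; your variant is, if anything, slightly cleaner since it avoids having to check that $\stab(f_1-f)+\stab(f_j)$ has finite index.
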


\begin{proof}
Let $g_1 = f_1 - f$ and $g_j =  f_j$ for $2 \le j \le r$. Then $g_1 + \ldots +g_r = 0$ and  $\stab(g_i) + \stab(g_j)$ is a finite index subgroup of $\Z^d$ for all $1 \le i < j\le r$. 
Using the fact that $0$ is a polynomial, and applying  \Cref{lem:stabilizers_polynomials}, we get that each $g_i$ is a polynomial with respect to a finite index subgroup of $\Z^d$. But each $g_i$ is bounded. By \Cref{lem:bounded_poly=constant}, a polynomial with respect to a finite index subgroup of $\Z^d$ that is bounded must be constant on cosets of this finite index subgroup.
This implies that for each $1 \le j \le r$ the group $\stab(f_j)$ is of finite index in $\Z^d$.
\end{proof}

\Cref{thm:nice_corollary} is a direct consequence of the above lemma, as shown below.
\begin{proof}[Proof of \Cref{thm:nice_corollary}]
Set $f_j=\ind_{A_j}$, then $\sum_{j=1}^r f_j=1$. Let $L_j\le\Z^d$ be the subgroups of rank at least $d-1$ that stabilizes $A_j$. Note that for every two such subgroups $L_{j_1},L_{j_2} \le \Z^d$, either their intersection has rank $d-1$ or their sum has finite index in $\Z^d$. Assume by contradiction that the intersection of all $L_j$'s is of rank less than $d-1$. By unifying some of the $A_j$'s we can assume without loss of generality that $L_{j_1}+L_{j_2}$ is a finite index subgroup of $\Z^d$ for all $1 \le j_1 < j_2 \le r$. By \Cref{lem:piecewise_periodic_strt_1}, for every $1 \le j \le r$ the group $L_j = \stab(f_j)$ is of finite index in $\Z^d$. Since an intersection of finitely many finite index subgroups of $\Z^d$ has rank $d$, the assumption that $\rank\left(\bigcap_{j=1}^r L_j\right) < d-1$ is false. 
\end{proof}

The following  statement  is a direct generalization of \cite[Theorem $2.3$]{BPeriodicity2020}. The argument used in our proof below is quite similar to the one appearing in {BPeriodicity2020}. 

\begin{lemma}\label{lem:(d-1)_to_d_periodic_functions}
Suppose that $\Sigma \Subset \R$ is a finite set of real numbers, $g_1,\ldots,g_r:\Z^d \to \R$ are finitely supported functions and $f:\Z^d \to \Sigma$ is a $(d-1)$-periodic function such that $g_j * f$ is $d$-periodic for  every $1\le j \le r$. Then there exists a $d$-periodic function $\tilde f:\Z^d \to \Sigma$ such that $g_j*f= g_j * \tilde f$ for every $1\le j \le r$.
\end{lemma}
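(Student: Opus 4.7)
My plan is to realize the set of possible replacements for $f$ as a shift of finite type in which $f$ itself appears as a $(d-1)$-periodic point, and then to invoke \Cref{lem:Zd_SFT_periodic_points_from_d_minus_1} to upgrade this to a $d$-periodic point.

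First, I would define
\[
Y = \left\{ f' \in \Sigma^{\Z^d} \ : \ g_j * f' = g_j * f \text{ for every } 1 \le j \le r \right\},
\]
and set $L' = \bigcap_{j=1}^r \stab(g_j*f)$, which is a finite index subgroup of $\Z^d$ by hypothesis. Since $g_j * f$ is $L'$-periodic, a straightforward check shows that $Y$ is closed in $\Sigma^{\Z^d}$ and invariant under the $L'$-action. Moreover $f \in Y$, and since $f$ is $L$-periodic for some rank $d-1$ subgroup $L \le \Z^d$, the subgroup $L \cap L'$, which has rank $d-1$ in $\Z^d$ (and hence also in $L'$), is contained in $\stab(f)$. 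So $f$ is a $(d-1)$-periodic point in $Y$ viewed as an $L'$-subshift.

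Next, I would argue that $Y$, viewed as an $L'$-subshift, is an SFT. Because each $g_j$ is finitely supported, each constraint $(g_j * f')(x) = (g_j * f)(x)$ is a local condition on $f'$ restricted to $x + W$, where $W = -\bigcup_j \operatorname{supp}(g_j)$ is a fixed finite set. The right hand side $(g_j * f)(x)$ depends on $x$ only through its coset modulo $L'$, so if we fix a finite fundamental domain $D \Subset \Z^d$ for $L'$ and re-encode $f' : \Z^d \to \Sigma$ as $F : L' \to \Sigma^D$ via $F(\ell)(d) = f'(\ell+d)$, then the family of constraints becomes translation-invariant with respect to the $L'$-action on $(\Sigma^D)^{L'}$. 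Choosing any isomorphism $L' \cong \Z^d$, this exhibits (the encoded) $Y$ as a $\Z^d$-SFT with alphabet $\Sigma^D$.

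Having done this, the proof concludes by applying \Cref{lem:Zd_SFT_periodic_points_from_d_minus_1} to the encoded SFT with the $(d-1)$-periodic point corresponding to $f$: we obtain a $d$-periodic point in $Y$, i.e.\ a function $\tilde f : \Z^d \to \Sigma$ whose stabilizer in $L'$ has rank $d$, hence has finite index in $\Z^d$. By construction $\tilde f \in Y$, so $g_j * \tilde f = g_j * f$ for all $j$, as required. The only real step to check carefully is the SFT claim in the middle paragraph, since the point of the encoding by a fundamental domain is to convert coset-dependent constraints into honest translation-invariant local rules; once this bookkeeping is done, everything else follows directly from the already-established \Cref{lem:Zd_SFT_periodic_points_from_d_minus_1}.
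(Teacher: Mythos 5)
Your proposal is correct and follows essentially the same route as the paper: the paper also realizes the solution set as a subshift of finite type over the group $\Gamma = \bigcap_j \stab(g_j*f)$, observes that $f$ is a $(d-1)$-periodic point there, and applies \Cref{lem:Zd_SFT_periodic_points_from_d_minus_1}. Your extra care with the fundamental-domain re-encoding just makes explicit a step the paper leaves implicit.
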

\begin{proof}
Consider the space 
\[X = \{ \tilde x \in \Sigma^{\Z^d }:\, \forall 1\le j \le r,\, g_j * \tilde x= g_j * f \},\]
and let $\Gamma = \bigcap_{j=1}^r \stab(g_j * f)$. Then $X$ is a $\Gamma$-shift of finite type, and by definition $f \in X$ is a $(d-1)$-periodic point in $X$. 
Apply \Cref{lem:Zd_SFT_periodic_points_from_d_minus_1} to conclude that there exists $\tilde f \in X$ that is $d$-periodic. Any such point $\tilde f$ satisfies the conclusion of the lemma.
\end{proof}

At this stage, we are prepared to present the proof of \Cref{thm:from_weakly_periodic_to_periodic}.

\begin{proof}[Proof of \Cref{thm:from_weakly_periodic_to_periodic}]
Suppose that $A\subseteq\Z^d$ is a piecewise $(d-1)$-periodic joint co-tile for $F_1,\ldots,F_k\Subset \Z^d$. That is, there exists functions $f_1,\ldots,f_r:\Z^d\to \{0,1\}$, each $f_j$ is $(d-1)$-periodic, and $\ind_A = \sum_{j=1}^r f_j$.
Consider first the case that $\rank\left(\bigcap_j \stab(f_j)\right) \ge d-1$. Let
$X= \bigcap_{i=1}^k \{x\in \{0,1\}^{\Z^d} :~ \ind_{F_i}*x=1\}$. The elements of $X$ are precisely the indicator functions of joint co-tiles for $(F_1,\ldots,F_k)$. By the discussion following \Cref{def:SFT}, $X$ is a shift of finite type, and $\ind_A = \sum_{j=1}^r f_j \in X$ is a $(d-1)$ periodic point. Thus by \Cref{lem:Zd_SFT_periodic_points_from_d_minus_1} $X$ contains a $d$-periodic point.

 It remains to consider the case where $\rank\left(\bigcap_j \stab(f_j)\right)< d-1$:
Note that for every two subgroups $L_1,L_2 \le \Z^d$ having rank at least $d-1$, either their intersection has rank at least $d-1$ or $L_1 + L_2$ has finite index in $\Z^d$. So as before, by possibly summing some of the $f_j$'s we can assume without loss of generality that $\stab(f_l)+\stab(f_j)$ is a finite index subgroup of $\Z^d$ for all $1 \le l < j \le r$. 
Now consider the functions $\ind_{F_i}*f_j$. Observe that for every $1\le i\le k$ we have $\sum_{j=1}^r \ind_{F_i}*f_j = 1$, and for every $1\le j\le r$ we have $\stab(f_j)\le\stab(\ind_{F_i}*f_j)$. Thus setting $\Lambda_{i,j}:=\stab(\ind_{F_i}*f_j)$ yields that $\rank\left(\Lambda_{i,j}\right)\ge d-1$ and $\Lambda_{i,l}+\Lambda_{i,j}$ is a finite index subgroup of $\Z^d$, for every $1\le i\le k$ and $1\le l< j\le r$.   
Applying \Cref{lem:piecewise_periodic_strt_1} for each $1\le i\le k$ separately we see that each $\Lambda_{i,j}$ is a finite index subgroup of $\Z^d$. That is, each one of the functions $\ind_{F_i}*f_j$ is $d$-periodic. 
For any fixed $1\le j\le r$, applying \Cref{lem:(d-1)_to_d_periodic_functions} with $g_i = \ind_{F_i}$ and $f=f_j$ and $\Sigma = \{0,1\}$, yields a $d$-periodic function $\tilde f_j:\Z^d\to \{0,1\}$ that satisfies $\ind_{F_i}*\tilde f_j = \ind_{F_i}*f_j$, for all $1\le i\le k$. In particular, the function $f:\Z^d\to \Z$ defined by $f:= \sum_{j=1}^r \tilde f_j$ is bounded, $d$-periodic, and it satisfies 
\[
\forall 1\le i\le k: \qquad \ind_{F_i}*f =  \ind_{F_i}*\left(\sum_{j=1}^r \tilde f_j\right) = \sum_{j=1}^r \ind_{F_i}*\tilde f_j = \sum_{j=1}^r \ind_{F_i}* f_j = 1.
\]
Since $f= \sum_{j=1}^r \tilde f_j$ is a sum of $\{0,1\}$-valued functions and $1_{F_i}* f = 1$, it follows that $f$ itself is $\{0,1\}$-valued, hence $f$ is an indicator of a set $\tilde A$ that satisfies $F_i \oplus \tilde A = \Z^d$, for $1\le i\le k$. Since each $\tilde f_j$ is $d$-periodic, so is $\tilde A$. This completes the proof. 

\end{proof}

\ignore{
\begin{proof}[Proof of \Cref{thm:main1}]
Under the assumptions of \Cref{thm:main1} we may apply \Cref{thm:using_strong_independence} to obtain that $A$ is weakly piecewise $(d-1)$-periodic, and the assertion follows from \ref{thm_sec_b:main} of \Cref{thm:from_weakly_periodic_to_periodic}.
\end{proof}
}

\section{Constructing independent tiles with property $(\star)$ for a periodic co-tile}\label{sec:constructing_brother_tiles}
In this section, we prove \Cref{thm:brother_tiles}. 
We repeatedly rely on the following basic fact:
\begin{lemma}\label{lem:lower_dim_subspaces}
Let $L \le \Z^d$ be a finite index subgroup and let $U_1,\ldots,U_r \subset \R^d$ be affine subspaces of dimension strictly smaller than $d$. Then the set $L \setminus \bigcup_{i=1}^r U_i$ is infinite.
\end{lemma}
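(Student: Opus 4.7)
The plan is to prove this by induction on the dimension $d$. The base case $d=1$ is essentially trivial: proper affine subspaces of $\R$ are single points, and $L$, being a finite-index subgroup of $\Z$, is infinite, so removing finitely many points from $L$ leaves infinitely many elements.

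For the inductive step with $d\ge 2$, I exploit the fact that $L$, being a finite-index subgroup of $\Z^d$, is a free abelian group of rank $d$, so I fix a $\Z$-basis $e_1,\ldots,e_d$ of $L$ and slice $L$ by the parallel affine hyperplanes $H_n := ne_d + \spn_\R(e_1,\ldots,e_{d-1})$, $n\in\Z$. Each slice $L\cap H_n$ equals $ne_d + L'$ where $L' := \Z e_1+\cdots+\Z e_{d-1}$ is a rank-$(d-1)$ lattice spanning $H_0\cong\R^{d-1}$, so $L$ is the disjoint union of its slices.

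The key observation is that for each $i$, the intersection $U_i\cap H_n$ is either empty, a proper affine subspace of $H_n$, or all of $H_n$; the last possibility forces $U_i = H_n$ (since $\dim U_i \le d-1 = \dim H_n$) and hence can occur for at most one value of $n$ per $i$. Therefore, for all but at most $r$ values of $n$, every $U_i\cap H_n$ is either empty or a proper affine subspace of $H_n\cong\R^{d-1}$. Translating $H_n$ back to $H_0$ by subtracting $ne_d$, the inductive hypothesis applied to $L'$ and the translated subspaces yields infinitely many points in $L\cap H_n$ not lying in any $U_i$. Since this holds for infinitely many $n$, we recover infinitely many points of $L\setminus\bigcup_{i=1}^r U_i$, completing the induction. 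There is no genuine obstacle in this argument beyond verifying the dimension count that underlies the key observation.
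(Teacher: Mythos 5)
Your proof is correct, but it takes a genuinely different route from the paper. The paper's argument is a two-line counting estimate: with $B_n=\{-n,\ldots,n\}^d$ one has $|B_n\cap L|\ge c\,n^d$ while $|B_n\cap U_i|\le c_i\,n^{\dim U_i}\le c_i\,n^{d-1}$, so the complement $L\setminus\bigcup_i U_i$ meets $B_n$ in a set whose size tends to infinity. You instead induct on $d$, slicing $L$ along the parallel hyperplanes $H_n=ne_d+\spn_\R(e_1,\ldots,e_{d-1})$ determined by a $\Z$-basis of $L$; your key dimension count (that $U_i\cap H_n=H_n$ forces $U_i=H_n$, which pins down at most one $n$ per $i$) is right, and the reduction of each good slice to the $(d-1)$-dimensional case via the identification $(H_0,L')\cong(\R^{d-1},\Z^{d-1})$ is routine. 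The trade-off: the paper's argument is shorter and yields the stronger quantitative fact that the complement has full density in $L$, but it silently uses the (standard) bound on the number of lattice points of $\Z^d$ lying in a proper affine subspace within a box; your induction avoids that counting lemma entirely at the cost of a slightly longer, purely structural argument. Both establish the lemma as stated.
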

\begin{proof}
For $n\in\N$ let $B_n=\{-n,\ldots,n\}^d$. Then there exist $c,c_1,\ldots,c_r>0$ such that $\absolute{B_n\cap L} \ge c n^d$ while $\absolute{B_n\cap U_i}\le c_i n^{\dim U_i}\le c_i n^{d-1}$. In particular, $\absolute{B_n\cap \left(L \setminus \bigcup_{i=1}^r U_i\right)}$ tends to infinity as $n$ tends to infinity.
\end{proof}

\begin{lemma}\label{lem:constructing_brothers} Let $F\Subset\Z^d$, let $A\subseteq\Z^d$ such that $F\oplus A=\Z^d$ and let $L\le\Z^d$ be a subgroup satisfying $A+L=A$. Then for every function $f:F\to L$ the tile set 
\[F_f := \{v + f(v) ~:~ v \in F\}\] satisfies $F_f \oplus A = \Z^d$.
\end{lemma}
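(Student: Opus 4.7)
The statement is essentially a direct verification: modifying each element of $F$ by an element of the stabilizer $L$ of $A$ does not destroy the tiling property. My plan is to check existence and uniqueness of the representation $z = w + a'$ with $w \in F_f$ and $a' \in A$ by leveraging the corresponding property for $(F,A)$.

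First I would observe the basic fact that $A - L = A$. Since $L$ is a subgroup, for every $\ell \in L$ we have $-\ell \in L$, so $A + L = A$ implies $A - \ell \subseteq A + L = A$ for every $\ell \in L$, and conversely $A = (A - \ell) + \ell \subseteq A + L = A - L + L \subseteq A$, giving $A - L = A$. Equivalently, $a + \ell \in A$ for every $a \in A, \ell \in L$, and this operation is invertible.

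Next, for existence, take any $z \in \Z^d$. Since $F \oplus A = \Z^d$, there exist unique $v \in F$ and $a \in A$ with $z = v + a$. I would then write
\[
z = v + a = (v + f(v)) + (a - f(v)).
\]
The first summand lies in $F_f$, and since $f(v) \in L$ and $A - L = A$, the second summand $a - f(v)$ lies in $A$. So every $z$ has at least one representation of the required form. For uniqueness, suppose
\[
z = (v_1 + f(v_1)) + a_1 = (v_2 + f(v_2)) + a_2
\]
with $v_1, v_2 \in F$ and $a_1, a_2 \in A$. Rearranging, $z = v_1 + (a_1 + f(v_1)) = v_2 + (a_2 + f(v_2))$. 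Since $A + L = A$, both $a_1 + f(v_1)$ and $a_2 + f(v_2)$ belong to $A$. By uniqueness of the $F \oplus A$ decomposition, we get $v_1 = v_2$ and $a_1 + f(v_1) = a_2 + f(v_2)$, and cancelling $f(v_1) = f(v_2)$ yields $a_1 = a_2$.

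There is no substantive obstacle here; the lemma is really just a bookkeeping statement saying that translating each tile element by a stabilizer of the co-tile preserves the tiling relation. The only subtlety worth stating explicitly is the passage $A + L = A \Rightarrow A - L = A$, which uses that $L$ is a subgroup (not merely a translation invariance set).
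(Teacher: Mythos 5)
Your proof is correct and uses the same key observation as the paper, namely that $f(v)\in L$ and $A+L=A$ imply $v+f(v)+A=v+A$; the paper simply phrases this at the level of sets, rewriting $\Z^d=\biguplus_{v\in F}(v+A)=\biguplus_{v\in F}(v+f(v)+A)$, while you unpack the same identity into an elementwise existence-and-uniqueness check. No gap; the remark that $A-L=A$ follows from $L$ being a subgroup is exactly the point worth making explicit.
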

\begin{proof} 

Given a function $f:F\to L$, we show that $F_f \oplus A = \Z^d$. 	The condition  $F \oplus A= \Z^d$  can be rewritten as 
$\Z^d= \biguplus_{v \in F}(v+A)$. Since $A +L =A$ and $f(v) \in L$ for every $v \in F$, it follows that $f(v)+A=A$.
Thus, 
\[
\Z^d =  \biguplus_{v \in F}(v + A) =\biguplus_{v \in F}(v + f(v)+ A) 
= \biguplus_{\tilde v \in F_f}(\tilde v + A).
\]
This proves that $F_f \oplus A= \Z^d$.
\end{proof}

Let us introduce some ad-hoc notation:
 \begin{definition}\label{def:V_W}
 Given $d,m \in \mathbb{N}$, an $m$-tuple of vectors $T= (v_1,\ldots,v_m)$ with $v_i \in \Z^d$ for $1\le i \le m$, a function $g:\{1,\ldots,m\} \to \Z^d$, a subset $J \subseteq \{1,\ldots,m\}$ and a subspace $W <\R^d$, let $V_W(T,g,J)$  denote the subspace of $\R^d / W$ obtained by projecting $\spn \{ v_j + g(j):~ j \in J\}$ into $\R^d /W$ via the map $v \mapsto v + W$.
 \end{definition}
 
 \begin{lemma}\label{lem:forcing_hyperplane_property}
		Suppose we are given $d,m \in \N$, an $m$-tuple of vectors $T= (v_1,\ldots,v_m)$ with $v_i \in \Z^d$ for $1\le i \le m$ and finite index subgroup $L \le \Z^d$.
 Then for every finite collection $\mathcal{W}$ of proper subspaces of $\R^d$ there exists a function $g:\{1,\ldots,m\} \to L$ so that for every $J \subseteq \{1,\ldots,m\}$ and every $W \in \mathcal{W}$
 we have  
 \[
 \dim \left( V_W(T,g,J)\right) = \min \{ d-\dim(W), |J|\}.
 \]

\end{lemma}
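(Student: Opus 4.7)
The plan is to construct $g(1),\ldots,g(m)\in L$ one coordinate at a time, maintaining as an inductive invariant that, after $k$ steps, for every $J\subseteq\{1,\ldots,k\}$ and every $W\in\mathcal{W}$ we have $\dim V_W(T,g,J)=\min\{d-\dim W,|J|\}$. The base case $k=0$ is trivial since $J=\emptyset$ forces $V_W(T,g,\emptyset)=\{0\}$.

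For the inductive step, suppose $g(1),\ldots,g(k-1)$ have been chosen satisfying the invariant, and consider a prospective choice of $g(k)\in L$. Subsets $J\subseteq\{1,\ldots,k\}$ not containing $k$ automatically satisfy the invariant by hypothesis, so the constraint only concerns subsets $J\ni k$. For such a $J$ and $W\in\mathcal{W}$, split into two cases. If $|J|-1\ge d-\dim W$, then by the inductive hypothesis applied to $J\setminus\{k\}$ we already have $V_W(T,g,J\setminus\{k\})=\mathbb{R}^d/W$, so the dimension cannot grow and the invariant holds for $J$ regardless of $g(k)$. If on the other hand $|J|-1< d-\dim W$, then the inductive hypothesis gives $\dim V_W(T,g,J\setminus\{k\})=|J|-1$, and the invariant for $J$ is equivalent to the projection of $v_k+g(k)$ not lying in $V_W(T,g,J\setminus\{k\})\subseteq \mathbb{R}^d/W$.

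Letting $U_{J,W}\subset\mathbb{R}^d$ denote the preimage of $V_W(T,g,J\setminus\{k\})$ under $\mathbb{R}^d\to \mathbb{R}^d/W$, this is a linear subspace of dimension $\dim W+|J|-1<d$, so the forbidden region for $g(k)$ is the affine subspace $U_{J,W}-v_k\subset\mathbb{R}^d$ of dimension strictly less than $d$. The set of ``bad'' pairs $(J,W)$ is finite, producing only finitely many such proper affine subspaces. By \Cref{lem:lower_dim_subspaces}, the complement in the finite-index subgroup $L$ of the union of these finitely many proper affine subspaces is infinite, and in particular non-empty. Choose any $g(k)$ in this complement; by construction the invariant extends from $\{1,\ldots,k-1\}$ to $\{1,\ldots,k\}$.

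The induction terminates at $k=m$ and yields the desired $g$. The only nontrivial ingredient is the avoidance step, and the obstacle there — namely that we must simultaneously avoid many proper affine subspaces while remaining in the lattice $L$ — is resolved immediately by \Cref{lem:lower_dim_subspaces}. The two-case analysis for whether the dimension bound $d-\dim W$ has been reached is needed to ensure we never impose contradictory requirements (in the saturated case no constraint on $g(k)$ is imposed at all).
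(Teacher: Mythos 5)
Your proof is correct and follows essentially the same strategy as the paper's: induction on the number of defined coordinates, with the same split into the saturated case (no constraint) and the unsaturated case (forbid finitely many proper affine subspaces, each a translate of $\spn\{v_j+g(j):j\in J\setminus\{k\}\}+W$), resolved by \Cref{lem:lower_dim_subspaces}. The only cosmetic difference is that you start the induction at $k=0$ rather than $m=1$.
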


\begin{proof}
We fix a finite collection $\mathcal{W}$ of proper subspaces of $\R^d$ and prove the claim by induction on $m$. For $m=1$, we only need to choose $g(1) \in L$ such that $v_1+g(1) \not \in  W$ for any $W \in \mathcal{W}$.
This is possible by \Cref{lem:lower_dim_subspaces}. 
Assume by induction that $g(1),\dots, g(m) \in L$ have been defined so that the conclusion holds for every $J \subseteq \{1,\ldots,m\}$ and every $W\in\mathcal{W}$. Using \Cref{lem:lower_dim_subspaces} we can choose $g(m+1) \in L$ that is not contained in any affine hyperplane of the form 
\[U:= -v_{m+1} + \spn\{ v_j + g(j) :~ j \in J\}+W,\] 
where $W \in \mathcal{W}$ and $J$ ranges over subsets of $\{1,\ldots,m\}$ of size at most $d- \dim(W)-1$.
We need to show that for any $J \subseteq \{1,\ldots,m+1\}$ and $W \in \mathcal{W}$ we have $\dim \left( V_W(T,g,J)\right) = \min \{ d -\dim(W), |J|\}$. 
Fix some $J\subseteq\{1,\ldots,m+1\}$ and $W\in\mathcal{W}$. 
The assertion follows from the induction hypothesis in case $(m+1) \not \in J$, so suppose $(m+1) \in J$. 
By the induction hypothesis, $\dim\left( V_W(T,g,J \setminus \{m+1\})\right) = \min \left\{d - \dim(W),|J \setminus \{m+1\}|\right\}$.
If $|J \setminus \{m+1\}| \ge d-\dim(W)$, then $\dim\left( V_W(T,g,J \setminus \{m+1\})\right) =d-\dim(W)$, as required. Otherwise, we have that
\[\dim(V_W(T,g,J \setminus \{m+1\})) = |J \setminus \{m+1\}| =|J|-1.\]
By our choice of $g(m+1)$, we have that 
\[
v_{m+1}+ g(m+1) \not\in \spn\left\{v_j+g(v_j) ~:~ j\in J\setminus\{m+1\}\right\} +W,
\]
so
\[ 
\dim\left( V_W(T,g,J )\right) = \dim (V_W(T,g,J \setminus \{m+1\})) +1 = |J|.
\]
This completes the induction step, hence the proof.
\end{proof}

\begin{proof}[Proof of \Cref{thm:brother_tiles}]
 Suppose $F \oplus A = \Z^d$ where $L\le \Z^d$ is a finite index subgroup satisfying $A+L=A$.
    Write $F^* = \{w_1,\ldots,w_k\}$.
    We apply \Cref{lem:forcing_hyperplane_property} with $m=(d-1) k$, the $m$-tuple $T=(v_1,\ldots,v_m)$, where $v_{kj+i}=w_i$ for $0 \le j \le d-2$, and $1\le i \le k$, and $\mathcal{W} = 
    \left\{\spn\{v\} ~:~ v \in F \right\}$ to obtain a function $g:\{1,\ldots,m\} \to L$ so that for every $J \subseteq \{1,\ldots,m\}$ and every $W \in \mathcal{W}$ we have
    \[
 \dim \left( V_W(T,g,J)\right) = \min \{ d-\dim(W), |J|\}.
 \]
 In other words, the set 
 $\tilde T:= \{ v_k +g(k)~:~ 1\le k \le m\} \subset \Z^d$ has the property that 
 any $d$ elements of $\tilde T$  are independent, and the union of any $d-1$ elements of $\tilde T$ together with any element of $F^*$ are also independent.
    
     For $0 \le j \le d-2$ we set
    \[F_{j+1} = \{0\} \cup \{v_{kj +i} + g(kj+i):~ 1\le i \le k\}= \{0\} \cup \{w_i + g(kj+i): 1 \le i \le k\}.\] 
    
By \Cref{lem:constructing_brothers} we indeed have $F_j \oplus A = \Z^d$ for every $1 \le j \le d-1$. 

To see that $(F_1,\ldots,F_{d-1},F)$ is a $d$-tuple of  independent tiles, note that for any
choice of $(u_1,\ldots,u_{d-1},v) \in F_1^* \times \ldots \times F_{d-1}^*\times F$
, the set $\{u_1,\ldots,u_{d-1},v\}$ is a union of $d-1$ elements of $\tilde T$ together with an element of $F^*$, hence it is an independent set.


Let us check that $\{F_1,\ldots,F_{d-2},F\}$ has the property $(\star)$.  Since property $(\star)$ is vacuous in $d=2$, we assume that $d\ge 3$. Choose two distinct $(d-2)$-tuples
\[(u_1,\ldots,u_{d-2}), (\tilde u_1,\ldots,\tilde u_{d-2}) \in F_1^* \times \ldots \times F_{d-2}^*,\]
and $v,\tilde v \in F^*$. 
 Since the $(d-2)$-tuples are distinct, there exists $1\le i \le d-2$ such that $u_i \ne \tilde u_i$. So $\{u_1,\ldots, u_{d-2}\} \cup \{\tilde u_i\} \cup \{ v\}$ is a subset of $\Z^d$ that consists of $d-1$ elements of $\tilde T$ together with an element of $F^*$, hence it is an independent set.
In particular, 
\[ \spn \left( \{\tilde u_1,\ldots,\tilde u_{d-2}\} \right)  \not\subseteq \spn \left(\{u_1,\ldots,u_{d-2},v\} \right). \]
This shows that 
\[ \spn \left( \{\tilde u_1,\ldots,\tilde u_{d-2}, \tilde v\} \right)  \ne \spn \left(\{u_1,\ldots,u_{d-2},v\} \right),\]
which proves that $(F_1,\ldots,F_{d-2},F)$ has property $(\star)$.
 \end{proof}

\section{Further comments and questions}
\label{sec:further_comments}

\subsection{Integer-valued co-tiles}
Given $F \Subset \Gamma$, we say that a bounded function $f:\Gamma \to \Z$ is an \emph{integer-valued co-tile} for $F$ if $\ind_F * f = 1$. Observe that our proof of \Cref{thm:from_weakly_periodic_to_periodic} holds for integer-valued co-tile as well, thus we have:
\begin{prop} 
Let $k$ and $d$ be positive integers and let $F_1,\ldots, F_k\Subset \Z^d$.
 Suppose that $F_1,\ldots, F_k$ admit an integer-valued joint co-tile $f$ and that $f = \sum_{i=1}^rf_r$, where each $f_i:\Z^d\to \Z$ is bounded and $(d-1)$-periodic. Then $F_1,\ldots, F_k$ admit a $d$-period integer-valued joint co-tile. 
\end{prop}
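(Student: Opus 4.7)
The plan is to adapt the proof of \Cref{thm:from_weakly_periodic_to_periodic} essentially verbatim, noting that every step carries through for bounded integer-valued functions, with the only substantive simplification occurring at the very end. Since each $f_j$ is bounded and integer-valued, it takes values in a finite set $\Sigma_j \Subset \Z$; set $\Sigma := \bigcup_{j=1}^r \Sigma_j \Subset \Z$ and consider the $\Z^d$-SFT
\[
X := \bigcap_{i=1}^k \left\{ x \in \Sigma^{\Z^d}:\ \ind_{F_i} * x = 1\right\},
\]
which is indeed an SFT (intersection of finitely many SFTs) and contains $f$.

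First I would handle the case $\rank\bigl(\bigcap_j \stab(f_j)\bigr) \geq d-1$. Here $f$ itself is $(d-1)$-periodic and lies in $X$, so by \Cref{lem:Zd_SFT_periodic_points_from_d_minus_1} the SFT $X$ contains a $d$-periodic point, which is the desired $d$-periodic integer-valued joint co-tile.

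Next I would handle the general case. By the same rank-dichotomy trick as in \Cref{thm:from_weakly_periodic_to_periodic}, after grouping some of the $f_j$'s together, one may assume that $\stab(f_\ell)+\stab(f_j)$ has finite index in $\Z^d$ for all $1\le \ell<j\le r$. For each fixed $i$, applying \Cref{lem:piecewise_periodic_strt_1} to the decomposition $\ind_{F_i}*f = \sum_{j=1}^r \ind_{F_i}*f_j$ (whose left-hand side is the $d$-periodic constant $1$) shows that each $\ind_{F_i}*f_j$ is $d$-periodic. Then for each $j$, \Cref{lem:(d-1)_to_d_periodic_functions} applied with $g_i = \ind_{F_i}$, $f=f_j$, and alphabet $\Sigma_j$ yields a $d$-periodic function $\tilde f_j:\Z^d\to \Sigma_j$ with $\ind_{F_i}*\tilde f_j = \ind_{F_i}*f_j$ for every $1\le i\le k$.

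Finally I would set $\tilde f := \sum_{j=1}^r \tilde f_j$. Then $\tilde f$ is bounded, integer-valued (as a finite sum of integer-valued functions), and $d$-periodic (as a finite sum of $d$-periodic functions), and
\[
\ind_{F_i}*\tilde f = \sum_{j=1}^r \ind_{F_i}*\tilde f_j = \sum_{j=1}^r \ind_{F_i}*f_j = \ind_{F_i}*f = 1
\]
for every $1\le i\le k$, giving the desired integer-valued joint co-tile. I do not anticipate a serious obstacle: the only step of the original proof of \Cref{thm:from_weakly_periodic_to_periodic} that used the $\{0,1\}$-valued assumption was the final sentence, where one argued that a sum of $\{0,1\}$-valued functions summing to an indicator under $\ind_{F_i}*$ is itself $\{0,1\}$-valued; here no such step is required because integer-valuedness is automatically preserved by finite summation.
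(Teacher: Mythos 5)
Your proposal is correct and is precisely the paper's intended argument: the paper proves this proposition simply by observing that the proof of \Cref{thm:from_weakly_periodic_to_periodic} carries over verbatim to bounded integer-valued functions, with the final step (deducing $\{0,1\}$-valuedness of the sum) no longer needed, exactly as you note. One cosmetic slip: the alphabet $\Sigma=\bigcup_{j=1}^r\Sigma_j$ need not contain the range of $f=\sum_{j=1}^r f_j$, which lies in the sumset $\Sigma_1+\cdots+\Sigma_r$, so in your first case you should take $\Sigma$ to be the range of $f$ (or the sumset) in order for $X$ to contain $f$.
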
 

It is natural to ask whether the existence of an integer-valued co-tile for $F\Subset\Gamma$ implies the existence of a set $A\subseteq \Gamma$ for which $\ind_F * \ind_A =1$? The simple example below shows that this is not true even for $\Gamma = \Z$ (or for $\Gamma$ a finite cyclic group, here $\Z/18\Z$). Let $F_1 = \{0,1\}$, $F_2=\{0,3,6\}$ and $F= F_1 \oplus F_2= \{0,1,3,4,6,7\}$.
\begin{figure}[H]
\centering
        \includegraphics[scale=0.5]{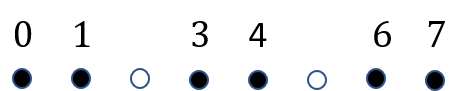}
\end{figure}

We claim that $F$ does not tile $\Z$, but it does admit an integer-valued co-tile. Note that for $A_1 = 2\Z$ and $A_2 = \{0,1,2\}\oplus 9\Z$ we have \[F_1 \oplus A_1 = F_2 \oplus A_2 =\Z.\]
Furthermore, if $\tilde A_1$ is a co-tile for $F_1$ then $\tilde A_1$ must be a translate of $A_1$.
To see that $F$ does not tile $\Z$, suppose by contradiction that $F \oplus A =\Z$ then $F_1 \oplus (F_2 \oplus A) = \Z$, so we must have that $F_2 \oplus A$ is a coset of $2\Z$, but this is clearly impossible since $F_2$ is not contained in a coset of $2\Z$. Now take 
\[
f= \ind_{A_1} -\ind_{A_2}.
\]
Then using $\ind_F = \ind_{F_1} *  \ind_{F_2}$ and $\ind_{F_i} * 1 = |F_i|$ we get:

\[\ind_F * f 
= \ind_{F_2} * (\ind_{F_1} * \ind_{A_1} ) - \ind_{F_1} * (\ind_{F_2} * \ind_{A_2}) 
= |F_2| - |F_1| = 1.\]

\subsection{Conditions for joint tilings for $d$ independent tiles in $\Z^d$}
In view of \Cref{thm:d_independent_tiles}, the classical Wang argument (see \cite{Berger_1966}, \cite{Robinson_1971}) implies that it is algorithmically decidable whether a set of $d$ independent tiles in $\Z^d$ admit a joint co-tile: Indeed, any such tiling must be periodic so we can exhaust the possible periodic co-tiles.
As in \cite{Greenfeld_Tao1_2021}, from an upper bound for the period of a co-tile one can directly deduce an upper bound for the computational complexity of this tiling problem. 
It is of interest to find explicit necessary and sufficient conditions for a $d$-tuple of independent subsets of $\Z^d$ to admit a joint co-tile. In view of \Cref{thm:brother_tiles}, the previous problem is closely related to the more basic question of finding explicit necessary and sufficient conditions for a finite set of $\Z^d$ to tile periodically.

Conversely, one can ask about necessary and sufficient conditions for an infinite subset of $\Z^d$ to be a joint co-tile for $d$-independent tiles. In view of \Cref{thm:d_independent_tiles} and \Cref{thm:brother_tiles}, this is equivalent to the question of finding necessary and sufficient conditions for a periodic subset of $\Z^d$ to be a co-tile for a finite tile.

A complete solution to the above questions involves the factorization of finite abelian groups, namely understanding solutions for $A \oplus B= G$, where $G$ is a finite abelian group. This is a difficult problem even in the cyclic case $G= \Z/M\Z$, which comes up in tilings of $\Z$.

Coven and Meyerowitz \cite{Coven_Meyerowitz_1999} found explicit and efficiently verifiable sufficient conditions for tiling the integers by a finite set. 
It has been conjectured that these conditions are also necessary. This conjecture has been verified in some specific cases recently \cite{Laba_Londner_2022_a, Laba_Londner_2022_b}. 
The necessity of the Coven-Meyerowitz conditions would imply an efficient algorithm for determining if a given finite subset $F \Subset \Z$ can tile $\Z$, see \cite{Kolountzakis_Matolcsi_2009}.


\subsection{Higher level tilings} 
A level $\ell$ co-tile of $\Z^d$ by a finite set $F \Subset \Z^d$ is a set $A \subseteq \Z^d$ such that $\ind_F * \ind_A = \ell$. An obvious generalization of \Cref{prop:mean} implies that if $\ind_F * f = \ell$ then $f$ has mean $\frac{\ell}{|F|}$. A fairly routine modification of the proof of \Cref{thm:structure_theorem} gives the following generalization:


\begin{thm}
\label{thm:level_ell_structure_theorem} Let $\ell_1,\ldots,\ell_k \in \N$, $F_1,\ldots,F_k \Subset \Z^{d}$, with $0 \in F_i$ for all $1 \le i \le k$, and let $f:\Z^d\to\Z$ be a bounded function that satisfies $\ind_{F_i} * f = \ell_i$ for all $1 \le i \le k$. Then for every $1 \le i \le k$ and every $(v_1,\ldots,v_i) \in F_1^*\times \ldots \times F_i^*$ there exists a function $\phi_{v_1,\ldots,v_i}:\Z^d \to [\min f,\max f]$ with the following properties:
\begin{enumerate}[label=(\alph*)]
\item\label{thm_sec_a:structure_theorem_ell}  
For $i < k$ we have
\begin{equation*}
        \phi_{v_1,\ldots,v_i} = \ell_{i+1} - \sum_{v_{i+1} \in F_{i+1}} \phi_{v_1,\ldots,v_i,v_{i+1}}.
\end{equation*}
\item\label{thm_sec_b:structure_theorem_ell}  
\begin{equation*}
        f = (-1)^i\sum_{(v_1,\ldots,v_i) \in F_1^*\times \ldots \times F_i^*} \phi_{v_1,\ldots,v_i} +\sum_{j=1}^{i}(-1)^{j-1} \prod_{t=1}^{j}\ell_t \prod_{s=1}^{j-1}|F_s|.
\end{equation*}
\item\label{thm_sec_c:structure_theorem_ell} 
Let $q$ denote the product of all primes less than or equal to $\max_{1 \le i \le k}\ell_k (\max f - \min f)\max_{1 \le i \le k} |F_i|$, then
\[
\left( \Z qv_1 + \ldots+ \Z qv_i \right)  \le  \stab(\phi_{v_1,\ldots,v_i}),
\]
\item\label{thm_sec_d:structure_theorem_ell} 
$\ind_{F_i} * \phi_{v_1,\ldots,v_i} = \ell_i$ for every $1\le i\le k$. In particular, it has mean $\ell_i/|F_i|$.
\end{enumerate}
\end{thm}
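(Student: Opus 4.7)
The plan is to essentially transcribe the proof of \Cref{thm:structure_theorem}, adjusting the bookkeeping of constants to track $\ell_1,\ldots,\ell_k$ rather than $1$. First I would apply \Cref{lem:dilation} separately to each $F_i$ at level $\ell_i$; with $q$ as in part \ref{thm_sec_c:structure_theorem_ell} this yields $\ind_{rF_i}*f=\ell_i$ for every $r\equiv 1\pmod q$ and every $1\le i\le k$. Because $0\in F_i$, this rearranges as
\[
f=\ell_i-\sum_{v\in F_i^*}\delta_{rv}*f \qquad (1\le i\le k,\ r\equiv 1\pmod q).
\]

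Next, I would define the finite averages exactly as in \eqref{eq:phi_i^N},
\[
\phi^{(N)}_{v_1,\ldots,v_i}=\frac1{N^i}\sum_{n_1,\ldots,n_i=1}^{N}\delta_{(1+n_1q)v_1+\ldots+(1+n_iq)v_i}*f,
\]
and derive the finite-$N$ recursion by applying the displayed identity with $i$ replaced by $i+1$, convolving with $\delta_{(1+n_1q)v_1+\ldots+(1+n_iq)v_i}$, and averaging over $n_1,\ldots,n_{i+1}\in\{1,\ldots,N\}$. The result,
\[
\phi^{(N)}_{v_1,\ldots,v_i}=\ell_{i+1}-\sum_{v_{i+1}\in F_{i+1}^*}\phi^{(N)}_{v_1,\ldots,v_{i+1}},
\]
is the finite-$N$ version of \ref{thm_sec_a:structure_theorem_ell}. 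Iterating from $i=0$ (where $\phi^{(N)}_\emptyset=f$) and collecting integer terms by induction on $i$, one reads off the explicit constant in \ref{thm_sec_b:structure_theorem_ell}; the same argument also shows that this constant is independent of $N$.

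Each $\phi^{(N)}_{v_1,\ldots,v_i}$ is a convex combination of translates of $f$ and hence takes values in $[\min f,\max f]$, so by Arzel\`a--Ascoli (or a Cantor diagonalization) some subsequence converges pointwise to a function $\phi_{v_1,\ldots,v_i}$ satisfying \ref{thm_sec_a:structure_theorem_ell} and \ref{thm_sec_b:structure_theorem_ell}. The standard telescoping estimate
\[
\bigl|\phi^{(N)}_{v_1,\ldots,v_i}(w+qv_j)-\phi^{(N)}_{v_1,\ldots,v_i}(w)\bigr|\le\tfrac{2}{N}(\max f-\min f),
\]
valid for every $w\in\Z^d$ and $1\le j\le i$, passes to the limit and yields \ref{thm_sec_c:structure_theorem_ell}. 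Finally, \ref{thm_sec_d:structure_theorem_ell} follows because for any $u\in\Z^d$ one has $\ind_{F_j}*(\delta_u*f)=\delta_u*\ell_j=\ell_j$, so $\ind_{F_j}*\phi^{(N)}_{v_1,\ldots,v_i}=\ell_j$ for every $N$, and the equality survives the limit; the mean assertion then follows from the natural generalization of \Cref{prop:mean}.

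There is no substantial new obstacle beyond \Cref{thm:structure_theorem}: the only delicate point is verifying, by induction on $i$, that the integer constant appearing in step (b) matches the closed-form expression in the statement. One should also check that the bound on $q$ in the Dilation Lemma indeed requires only the quantity $\max_i\ell_i\cdot(\max f-\min f)\cdot\max_i|F_i|$, which enters because $\ind_{pF_i}*f$ and $\ell_i$ both take values in an interval of length at most $|F_i|(\max f-\min f)$ while themselves being bounded by $\max_i\ell_i$ in absolute value.
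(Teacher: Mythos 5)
Your approach is exactly the paper's intended one: the paper gives no proof of this theorem, saying only that it follows by a ``fairly routine modification'' of the proof of \Cref{thm:structure_theorem}, and your transcription --- dilation at level $\ell_i$, the averages $\phi^{(N)}_{v_1,\ldots,v_i}$, the recursion, compactness, telescoping, and convolution with $\ind_{F_j}$ --- is that modification. All parts go through as you describe.

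One caveat: the step you defer (``verifying, by induction on $i$, that the integer constant appearing in step (b) matches the closed-form expression in the statement'') is the one place where care is actually needed, and carrying it out shows a mismatch with the printed statement rather than with your argument. The recursion $\phi^{(N)}_{v_1,\ldots,v_i}=\ell_{i+1}-\sum_{v_{i+1}\in F_{i+1}^*}\phi^{(N)}_{v_1,\ldots,v_i,v_{i+1}}$ (note the sum must run over $F_{i+1}^*$, as you have it, not over $F_{i+1}$ as printed in part (a)) gives
\[
C_{i+1}-C_i=(-1)^{i}\,\ell_{i+1}\prod_{s=1}^{i}\bigl(|F_s|-1\bigr),
\qquad\text{hence}\qquad
C_i=\sum_{j=1}^{i}(-1)^{j-1}\,\ell_j\prod_{s=1}^{j-1}\bigl(|F_s|-1\bigr).
\]
This reduces to $\sum_{j=1}^{i}(-(S-1))^{j-1}$ of \Cref{thm:structure_theorem} when all $\ell_j=1$ and $|F_s|=S$, whereas the expression printed in part (b) reduces to $\sum_{j=1}^{i}(-S)^{j-1}$ and so cannot be correct; your proof should state the constant it actually produces. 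A second, minor point: your explanation of the factor $\max_i\ell_i$ in $q$ is off --- the Dilation Lemma needs only primes up to $(\max f-\min f)|F_i|$, independently of $\ell_i$, because $\ind_{pF_i}*f$ and $\ell_i$ both lie in the interval $[|F_i|\min f,\,|F_i|\max f]$ of that length; the extra factor in the theorem's $q$ is simply harmless, since enlarging $q$ only restricts the set of dilations used.
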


From \Cref{thm:level_ell_structure_theorem} it is easy to deduce that both  \Cref{thm:periodic_decomposition} and \Cref{thm:d_independent_tiles} generalize to level $\ell$ tilings, in the sense that any  ``joint higher-level-co-tile'' for $d$-independent tiles in $\Z^d$ is periodic.

\subsection{Piecewise $1$-periodicity of co-tiles in $\Z^2 \times (\Z/p\Z)$}

By applying the arguments of \Cref{sec:countable_abelian}, the methods of \cite{Greenfeld_Tao1_2021} directly give:
\begin{thm}\label{thm:tiling_Z2_times_prime}
Let $p$ be a prime number, $\Gamma = \Z^2 \times (\Z /p\Z)$ and $F \Subset \Gamma$ be a finite set. 
Then one of the following holds:
\begin{enumerate}
\item Any $A \subset \Gamma$ satisfying $F \oplus A=  \Gamma$ is piecewise $1$-periodic.
\item There exists a finite set $\tilde F \subset \Z^2$ such that $F= \tilde F \times (\Z /p\Z)$.
\end{enumerate}
\end{thm}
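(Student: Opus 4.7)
The proof parallels \Cref{prop:newman_Z_mod_pZ}, with the Greenfeld-Tao piecewise $1$-periodicity theorem for $\Z^2$ co-tiles \cite{Greenfeld_Tao1_2021} playing the role that Newman's theorem plays there. Let $T := \{(0,0)\} \times (\Z/p\Z)$ denote the torsion subgroup of $\Gamma$. If $F = \tilde F \times (\Z/p\Z)$ for some $\tilde F \Subset \Z^2$, conclusion (2) of the theorem holds; so assume otherwise. Then some fiber $\{(n,m)\} \times (\Z/p\Z)$ meets $F$ in a nonempty proper subset, and after translating $F$ within $\Gamma$ (which does not affect piecewise $1$-periodicity of its co-tiles), I may assume that $\tF := F \cap T$ is a nonempty proper subset of $T$; write $\tF = \{(0,0)\} \times F_0$ with $\emptyset \neq F_0 \subsetneq \Z/p\Z$.

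Fix a co-tile $A \subseteq \Gamma$ of $F$. By \Cref{thm:structure_countable_abelian} with $k = 1$ and $f = \ind_A$, there exist $[0,1]$-valued functions $\phi_v$, for $v \in F^* := F \setminus \tF$, whose stabilizers contain the rank-one subgroup $\Z q v$ (since every $v \in F^*$ is non-torsion), and $\ind_{\tF} * \ind_A = 1 - \sum_{v \in F^*} \phi_v$. This is exactly the form of decomposition to which the Greenfeld-Tao piecewise $1$-periodicity argument applies. Since $\Gamma$ has rank two and each coset of the free subgroup $\Z^2 \times \{i\}$ of $\Gamma$ is canonically identifiable with $\Z^2$, the polynomial-modulo-$1$ analysis (\Cref{lem:stabilizers_polynomials}, \Cref{lem:bounded_poly=constant}) and the Weyl-equidistribution step (\Cref{thm:weyl_equidist_zd}) of \cite{Greenfeld_Tao1_2021} go through coset by coset. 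The conclusion is that $B := \tF + A = \biguplus_{g \in \tF}(g + A)$, the set whose indicator equals $\ind_{\tF} * \ind_A$, is piecewise $1$-periodic in $\Gamma$.

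It remains to transfer this to $A$. By \Cref{lem:Z_mod_p_invertible}, since $F_0$ is a nonempty proper subset of $\Z/p\Z$, there exists $g : \Z/p\Z \to \Q$ with $g * \ind_{F_0} = \delta_0$; extending by zero yields $\tilde g : \Gamma \to \Q$ supported in $T$ with $\tilde g * \ind_{\tF} = \delta_0$, so $\ind_A = \tilde g * \ind_B$. I witness the piecewise $1$-periodicity of $B$ by a finite-index subgroup $\Lambda \le \Gamma$, which I enlarge to satisfy $T \subseteq \Lambda$ so that $\Lambda = \Lambda' \times (\Z/p\Z)$ for some finite-index $\Lambda' \le \Z^2$, and such that $B \cap (c + \Lambda)$ is $1$-periodic inside $c + \Lambda$ for every coset. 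Because $\tF \subseteq \Lambda$, both $\ind_{\tF} *$ and $\tilde g *$ restrict to mutually inverse $\Lambda$-equivariant operators on functions supported in each coset $c + \Lambda$. Therefore a rank-one subgroup $L \le \Lambda$ stabilizes $\ind_{B \cap (c+\Lambda)}$ if and only if it stabilizes $\ind_{A \cap (c+\Lambda)}$, and piecewise $1$-periodicity transfers from $B$ to $A$.

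The main obstacle I foresee is verifying that the Greenfeld-Tao argument, designed for $\Z^2$, transfers cleanly to the rank-two group $\Z^2 \times (\Z/p\Z)$. The step that requires most care is the Weyl-equidistribution argument, but since each coset of the free part $\Z^2 \times \{i\}$ of $\Gamma$ is isomorphic to $\Z^2$, that step reduces to the known $\Z^2$ statement on each coset; no additional ideas beyond those already contained in Section~\ref{sec:countable_abelian} and in \cite{Greenfeld_Tao1_2021} are needed.
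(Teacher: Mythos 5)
Your proposal follows exactly the route the paper indicates for this theorem (which the paper itself only sketches in \Cref{sec:further_comments}): apply \Cref{thm:structure_countable_abelian} with $k=1$ to decompose $\ind_{\tF}*\ind_A$, run the Greenfeld--Tao/Section~\ref{sec:strong_independency} Weyl-equidistribution argument on the rank-two group $\Gamma$ to obtain piecewise $1$-periodicity of $\tF\oplus A$, and then deconvolve using the inverse of $\ind_{\tF}$ supplied by \Cref{lem:Z_mod_p_invertible}. Your reduction to the case where $\tF$ is a nonempty proper subset of the torsion fiber, and the identification of $\ind_{\tF}*\ind_A$ with the indicator of $B=\tF\oplus A$ (disjointness following from $\tF\subseteq F$ and $F\oplus A=\Gamma$), are both correct and match the paper's intent.

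The one step you assert without justification is in the transfer from $B$ to $A$: you claim that the finite-index subgroup $\Lambda$ witnessing the coset-wise $1$-periodicity of $B$ can be \emph{enlarged} so as to contain $T=\{(0,0)\}\times(\Z/p\Z)$ while retaining the property that $B\cap(c+\Lambda)$ is $1$-periodic for every coset. The subgroup produced by the equidistribution argument is an intersection of groups of the form $\stab(\psi)+\stab(\psi')$, and these need not contain $T$ (for instance $q\Z v_1+q\Z v_2$ with $v_1,v_2$ non-torsion can meet $T$ trivially). Replacing $\Lambda$ by $\Lambda+T$ merges up to $p$ cosets of $\Lambda$, and a union of finitely many $1$-periodic sets whose periods span different lines need not be $1$-periodic, so the enlargement is not free. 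To close this you must either arrange $T\subseteq\Lambda$ from the start inside the Weyl/polynomial argument, or treat separately the merged cosets whose constituent pieces have independent period directions (showing, for example, that in that situation everything on the merged coset is in fact $2$-periodic). Once $T\subseteq\Lambda$ is secured, the rest of your deconvolution --- that $\ind_{\tF}*$ and $\tilde g*$ are mutually inverse, support-preserving and stabilizer-preserving coset by coset --- is correct. Since the paper justifies this theorem in a single sentence, your write-up is, apart from this one unproved claim, at least as detailed as the original.
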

Note that according to our definition a subset of $A$ of a finitely generated abelian group $\Gamma$ is $1$-periodic if and only if there an element $\gamma \in \Gamma$ of infinite order such that $\gamma+ A= A$.

In fact, using \Cref{thm:structure_countable_abelian} and the results of \Cref{sec:strong_independency}, we can deduce the following: For any rank $2$ abelian group $\Gamma$ and any  $F \Subset \Gamma$, if $F \oplus A= \Gamma$ then the set $\tF \oplus A$ is piecewise $1$-periodic, whereas in \Cref{sec:countable_abelian}, $\tF$ is the intersection of $F$ with the torsion subgroup of $\Gamma$. Then in the case $\Gamma = \Z^2 \times (\Z/p\Z)$ with $p$ prime, \Cref{lem:Z_mod_p_invertible} implies \Cref{thm:tiling_Z2_times_prime}. 

\begin{cor}\label{cor:tiling_Z2_times_prime}
Let $p$ be a prime number, $\Gamma = \Z^2 \times (\Z /p\Z)$ and $F \Subset \Gamma$ be a finite set. If $F$ tiles $\Gamma$, then $F$ tiles $\Gamma$ periodically.
\end{cor}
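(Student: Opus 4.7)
The plan is to apply \Cref{thm:tiling_Z2_times_prime} and handle its two cases separately. Suppose $F \oplus A = \Gamma$. First, consider the case that $F = \tilde F \times (\Z/p\Z)$ for some $\tilde F \subset \Z^2$. I would argue as in the proof of \Cref{prop:newman_Z_mod_pZ}: writing $\ind_F = \ind_{\tilde F \times \{0\}} * \ind_{\{0\} \times (\Z/p\Z)}$ and using $\ind_F * \ind_A = 1$ forces the nonnegative integer-valued function $g = \ind_{\{0\} \times (\Z/p\Z)} * \ind_A$ to take values only in $\{0,1\}$, for otherwise a value $\ge 2$ would contribute more than $1$ to some convolution sum $\sum_{v \in \tilde F \times \{0\}} g(y-v) = 1$. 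Hence $A$ meets each fibre $\{n\}\times (\Z/p\Z)$ in at most one point; projecting to the first coordinate produces $\tilde A \subset \Z^2$ with $\tilde F \oplus \tilde A = \Z^2$. Bhattacharya's theorem then furnishes a periodic co-tile $\tilde A' \subset \Z^2$ of $\tilde F$, and $\tilde A' \times \{0\} \subset \Gamma$ is a rank-$2$ (hence periodic) co-tile of $F$.

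In the remaining case, every co-tile of $F$ in $\Gamma$ is piecewise $1$-periodic. Since $\Gamma$ has rank $2$, this coincides with piecewise $(d-1)$-periodicity, and I would invoke the $\Gamma$-analog of \Cref{thm:from_weakly_periodic_to_periodic} to upgrade piecewise $1$-periodicity to rank-$2$ periodicity. Unpacking: one merges the $1$-periodic pieces so their stabilizers pairwise sum to a finite-index subgroup of $\Gamma$, uses \Cref{lem:piecewise_periodic_strt_1} to conclude that each $\ind_F * f_j$ is rank-$2$-periodic, and then applies \Cref{lem:(d-1)_to_d_periodic_functions}---whose proof rests on the general SFT-lifting statement of \Cref{lem:Zd_SFT_periodic_points_from_d_minus_1}---to replace each $f_j$ by a rank-$2$-periodic $\{0,1\}$-valued function with the same convolutions against $\ind_F$. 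Summing the resulting functions produces the desired periodic co-tile.

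The main obstacle is checking that \Cref{thm:from_weakly_periodic_to_periodic} really transfers from $\Z^d$ to $\Gamma = \Z^2 \times (\Z/p\Z)$. Its key ingredients, however, were already set up in the required generality: \Cref{lem:Zd_SFT_periodic_points_from_d_minus_1} is stated for an arbitrary finitely generated abelian group of rank $d$, and the bounded-polynomial argument behind \Cref{lem:piecewise_periodic_strt_1} depends only on subgroups being of finite index, a property unaffected by the torsion factor $\Z/p\Z$. Thus the generalization is routine, and the corollary follows at once from the two cases of \Cref{thm:tiling_Z2_times_prime}.
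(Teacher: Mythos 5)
Your proof is correct and takes the route the paper intends: the case $F=\tilde F\times(\Z/p\Z)$ is handled by the fibre argument and reduction to Bhattacharya's theorem exactly as in the proof of \Cref{prop:newman_Z_mod_pZ}, and the other case by transporting \Cref{thm:from_weakly_periodic_to_periodic} to $\Gamma=\Z^2\times(\Z/p\Z)$, whose ingredients (\Cref{lem:Zd_SFT_periodic_points_from_d_minus_1}, \Cref{lem:piecewise_periodic_strt_1}, \Cref{lem:(d-1)_to_d_periodic_functions}) are indeed either already stated for finitely generated abelian groups or transfer verbatim to finite-index subgroups of $\Gamma$. I see no gaps.
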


Rachel Greenfeld and Terence Tao have informed us in private communication that they also obtained \Cref{cor:tiling_Z2_times_prime}. 

\subsection{A Fourier-analytic and algebraic-geometric approach to the study of joint co-tiles}\label{subsec:fourier}
Fourier analytic methods are a natural approach to translational tiling problems,  see \cite{MR2087242}. Here we outline a specific application related to \cite[Remark 1.8]{Greenfeld_Tao1_2021}. 
Let $g_1,\ldots,g_d:\Z^d \to \C$ be finitely supported functions, by which we mean that $g_i(v)=0$ for all but finitely many $v \in \Z^d$.
Suppose $f:\Z^d \to \C$ is a bounded function that satisfies $g_i * f =1 $ for all $1 \le i \le d$.
Taking distributional Fourier transform on both sides yields
\[ \hat{g_i} \cdot \hat{f} = \delta_0.\]
Thus, the  distributional Fourier transform  of $f$ is supported on $0$ and the 
intersection of the zeros of $\hat {g_i}$.
In particular, if $\hat {g_1},\ldots,\hat{g_d}$ have finitely many common zeros, $f$ must be the  Fourier transform of a multivariate trigonometric polynomial. 
From the multidimensional version of Weyl's equidistribution theorem, it follows that $f$ must be either periodic or equidistributed with respect to a measure whose support is an infinite compact subset of $\C$.
So if we further assume $f$ takes finitely many values (as is the case when $f$ is the indicator function of a joint co-tile), $f$ must be periodic.

The set of common zeros for $d$ polynomials in $d$ variables is ``generically'' a finite set.
Given $v=(n_1,\ldots,n_d) \in \Z_+^d$ let $X^v:= x_1^{n_1}\cdot \ldots \cdot x_d^{n_d}$ denote the corresponding monomial in $d$ variables $x_1,\ldots,x_d$.
Given a finite set $F \Subset \Z_+^d$, let $P_F := \sum_{v \in F} X^v$ denote the corresponding multivariate polynomial.
We conclude that whenever $F_1,\ldots,F_d \Subset \Z_+^d$ are subsets such that the algebraic  variety
\[V(P_{F_1},\ldots,P_{F_d}) := \bigcap_{i=1}^d \left\{ (x_1,\ldots,x_d) \in \C^d ~:~ P_{F_i}(x_1,\ldots,x_d) =0 \right\}\]
has a finite intersection with the $d$-sphere, then any joint co-tile for $F_1,\ldots,F_d$ is periodic.

This raises the question: Is it true that for an independent $d$-tuple $(F_1,\ldots,F_d)$ in $\Z^d$ the algebraic variety $V(P_{F_1},\ldots,P_{F_d})$ is finite?

We note that it can be shown that  $V(P_{F_1},\ldots,P_{F_d})$ is finite if we impose the somewhat stronger condition that  $(F_1-F_1,\ldots,F_d-F_d)$ is an independent $d$-tuple in $\Z^d$. This follows from the equality of the tropical variety with the Bieri-Groves set of the variety (see Theorem  2.2.5 and Corollary 2.2.6 in \cite{EinsiedlerKapranovLind}), combined with \cite[Theorem 2.2.3]{EinsiedlerKapranovLind} and an explicit direct computation.
This connection was kindly explained to us by Ilya Tyomkin.
This argument gives an alternative derivation of  the conclusion of \Cref{thm:d_independent_tiles}, under the slightly stronger assumption that $(F_1-F_1,\ldots,F_d-F_d)$ is an  independent tuple of tiles in $\Z^d$. 

\bibliographystyle{amsplain}

\begin{dajauthors}
\begin{authorinfo}[tm]
  Tom Meyerovitch\\
  Ben-Gurion University of the Negev,\\
Department of Mathematics,\\
Beer-Sheva, 8410501, Israel.\\
{\tt mtom@bgu.ac.il}
\end{authorinfo}
\begin{authorinfo}[sa]
  Shrey Sanadhya\\
Ben-Gurion University of the Negev,\\
Department of Mathematics,\\
Beer-Sheva, 8410501, Israel.\\
{\tt sanadhya@post.bgu.ac.il}\\
\hspace{20mm}\\
Einstein Institute of Mathematics,\\ 
The Hebrew University of Jerusalem,\\ 
Edmond J. Safra Campus,\\
Jerusalem, 91904, Israel.\\
{\tt shrey.sanadhya@mail.huji.ac.il}
\end{authorinfo}
\begin{authorinfo}[ys]
  Yaar Solomon\\
  Ben-Gurion University of the Negev,\\
  Department of Mathematics,\\
  Beer-Sheva, 8410501, Israel.\\
  {\tt yaars@bgu.ac.il}
\end{authorinfo}
\end{dajauthors}

\end{document}